\definecolor{mygray}{gray}{0.85}
\renewcommand{\leq}{\leqslant}
\renewcommand{\geq}{\geqslant}
\def\subsection{\@startsection{subsection}{3}%
  \z@{.5\linespacing\@plus.7\linespacing}{.3\linespacing}%
  {\bfseries\centering}}
\def\subsubsection{\@startsection{subsubsection}{3}%
  \z@{.5\linespacing\@plus.7\linespacing}{.3\linespacing}%
  {\centering}}
\def\myfnt{\ifx\protect\@typeset@protect\expandafter\footnote\else\expandafter\@gobble\fi}
\newtheorem{theorem}{Theorem}
\newtheorem{corollary}[theorem]{Corollary}
\newtheorem{definition}[theorem]{Definition}
\newtheorem{lemma}[theorem]{Lemma}
\newtheorem{proposition}[theorem]{Proposition}
\newtheorem{problem}[theorem]{Problem}
\newtheorem{observation}[theorem]{Observation}
\newtheorem{fact}[theorem]{Fact}
\newtheorem{remark}[theorem]{Remark}
\newtheorem{notation}[theorem]{Notation}
\newtheorem{convention}[theorem]{Convention}
\newtheorem{conjecture}[theorem]{Conjecture}
\newtheorem{definition/fact}[theorem]{Definition/Fact}
\newcounter{claimcounter}
\begin{document}

\begin{abstract} We give strong necessary conditions on the admissibility of a Polish group topology for an arbitrary graph product of groups $G(\Gamma, G_a)$, and use them to give a characterization modulo a finite set of nodes. As a corollary, we give a complete characterization in case all the factor groups $G_a$ are countable.
\end{abstract}

\title{Polish Topologies for Graph Products of Groups}
\thanks{Partially supported by European Research Council grant 338821. No. 1121 on Shelah's publication list.}

\author{Gianluca Paolini}
\address{Einstein Institute of Mathematics,  The Hebrew University of Jerusalem, Israel}

\author{Saharon Shelah}
\address{Einstein Institute of Mathematics,  The Hebrew University of Jerusalem, Israel \and Department of Mathematics,  Rutgers University, U.S.A.}

\maketitle


\section{Introduction}

	\begin{definition}\label{def_cyclic_prod} Let $\Gamma = (V, E)$ be a graph and $\{ G_a: a \in \Gamma \}$ a set of non-trivial groups each presented with its multiplication table presentation and such that for $a \neq b \in \Gamma$ we have $e_{G_a} = e = e_{G_b}$ and $G_a \cap G_b = \{ e \}$. We define the {\em graph product of the groups $\{ G_a: a \in \Gamma \}$ over $\Gamma$}, denoted $G(\Gamma, G_a)$, via the following presentation:
	$$ \text{ generators: } \bigcup_{a \in V} \{ g : g \in G_a \},$$
	$$ \text{ relations: } \bigcup_{a \in V} \{ \text{the relations for } G_a \} \cup \bigcup_{\{a, b \} \in E} \{ gg' = g'g : g \in G_a \text{ and } g' \in G_b \}.$$
\end{definition}

	This paper is the sixth in a series of paper written by the authors which address the following problems:
	
	\begin{problem}\label{problem} Characterize the graph products of groups $G(\Gamma, G_a)$ admitting a Polish group topology (resp. a non-Archimedean Polish group topology).
\end{problem}

	\begin{problem}\label{problem_metric} Determine which graph products of groups $G(\Gamma, G_a)$ are embeddable into a Polish group (resp. into a non-Archimedean Polish group).
\end{problem}

	The beginning of the story is the following question\footnote{The non-Archimedean version of this question was originally formulated by David Evans.}: can a Polish group be an uncountable free group? This was settled in the negative by Shelah in \cite{shelah}, in the case the Polish group was assumed to be non-Archimedean, and in general in \cite{shelah_1}. Later this negative result has been extended by the authors to the class of so-called right-angled Artin groups \cite{paolini&shelah}. After the authors wrote \cite{paolini&shelah}, they discovered that the impossibility results thein follow from an old  important result of Dudley \cite{dudley}. In fact, Dudley's work proves more strongly that any homomorphism from a Polish group $G$ into a right-angled Artin group $H$ is continuous with respect to the discrete topology on $H$.
The setting of \cite{paolini&shelah} has then been further generalized by the authors in \cite{paolini&shelah_cyclic} to the class of graph products of groups $G(\Gamma, G_a)$ in which all the factor groups $G_a$ are cyclic, or, equivalently, cyclic of order a power of prime or infinity. In this case the situation is substantially more complicated, and the solution of the problem establishes that $G = G(\Gamma, G_a)$ admits a Polish group topology if and only if it admits a non-Archimedean Polish group topology if and only if $G = G_1 \oplus G_2$ with $G_1$ a countable graph product of cyclic groups and $G_2$ a direct sum of finitely many continuum sized vector spaces over a finite field. Concerning Problem \ref{problem_metric}, in \cite{paolini&shelah_metric_cyclic} the authors give a complete solution in the case all the $G_a$ are cyclic, proving that $G(\Gamma, G_a)$ is embeddable into a Polish group if and only if it is embeddable into a non-Archimedean Polish group if and only if $\Gamma$ admits a metric which induces a separable topology in which $E_{\Gamma}$ is closed. We hope to conclude this series of studies with an answer to Problem \ref{problem_metric} at the same level of generality of this paper.
	The logical structure of the references just mentioned (plus the present paper) is illustrated in Figure \ref{log_str}, where we use the numbering of Shelah's publication list, and one-direction arrows mean generalization and two-direction arrows mean solutions to Problem \ref{problem}/Problem \ref{problem_metric} at the same level of generality.

\begin{figure}[ht]
	\begin{center}
	
\begin{displaymath}
    \xymatrix{& [1121] & \\
              & [1115] \ar@{=>}[u] \ar@{<=>}[r]  & [1117] \\
              & [1112] \ar@{=>}[u] \\ 
              & [771] \ar@{=>}[u] \\
              & [744] \ar@{=>}[u]}
\end{displaymath}

\end{center}  \caption{Logical structure of the references.}\label{log_str}
\end{figure}
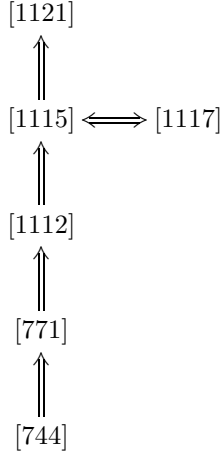

	In the present study we focus on Problem \ref{problem}, proving the following theorems:

	\begin{notation}\label{def_bounded} \begin{enumerate}[(1)]
	\item We denote by $\mathbb{Q} = G^*_{\infty}$ the rational numbers, by $\mathbb{Z}^{\infty}_p = G^*_p$ the divisible abelian $p$-group of rank $1$ (for $p$ a prime), and by $\mathbb{Z}_{p^k} = G^*_{(p, k)}$ the finite cyclic group of order $p^k$ (for $p$ a prime and $k \geq 1)$.
	\item We let $S_* = \{ (p, k): p \text{ prime and } k \geq 1 \} \cup \{ \infty \}$ and $S_{**} = S_{*} \cup \{ p : p \text{ prime}\}$;
	\item For $s \in S_{**}$ and $\lambda$ a cardinal, we let $G^*_{s, \lambda}$ be the direct sum of $\lambda$ copies of $G^*_s$.
	\end{enumerate}
\end{notation}

	\begin{theorem} \label{th_first_venue} Let $G = G(\Gamma, G_a)$ and suppose that $G$ admits a Polish group topology. Then for some countable $A \subseteq \Gamma$ and $1 \leq n < \omega$ we have:
	\begin{enumerate}[(a)]
	\item for every $a \in \Gamma$ and $a \neq b \in \Gamma - A$, $a$ is adjacent to $b$;
	\item if $a \in \Gamma - A$, then $G_a = \bigoplus \{ G^*_{s, \lambda_{a, s}} : s \in S_* \}$ (cf. Notation \ref{def_bounded});
	\item if $\lambda_{a, (p, k)} > 0$, then $p^k \mid n$;
	\item if in addition $A = \emptyset$, then for every $s \in S_*$ we have that $\sum\{\lambda_{a, s} : a \in \Gamma \}$ is either $\leq \aleph_0$ or $2^{\aleph_0}$.
\end{enumerate}
\end{theorem}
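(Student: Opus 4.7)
\emph{Strategy.} The plan is to bootstrap from the cyclic case solved in \cite{paolini&shelah_cyclic}, which settles Problem \ref{problem} when every $G_a$ is cyclic. Condition (a) is a statement about the graph $\Gamma$ alone and should reduce to the cyclic case by passing to a suitable cyclic quotient of $G$ that retains $\Gamma$ as underlying graph. Conditions (b) and (c) are algebraic statements about the factors $G_a$ for $a \in \Gamma - A$; these should follow from the observation that if $a$ is universal in $\Gamma$, then $G_a$ splits off as a direct factor of $G$, hence is itself Polish, and Polish groups sitting as factors in a graph product are highly constrained in their structure. Condition (d) is a cardinal arithmetic dichotomy to be handled by a Baire / perfect-set style argument applied to the (abelian Polish) group $G$ itself.

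\emph{Deriving (a).} For each $a \in \Gamma$, I would fix a normal subgroup $N_a \trianglelefteq G_a$ such that $\bar G_a := G_a / N_a$ is non-trivial cyclic (of prime order if $G_a$ is torsion, infinite otherwise). The induced surjection $\pi \colon G \twoheadrightarrow \bar G := G(\Gamma, \bar G_a)$ exhibits a cyclic graph product on the same graph $\Gamma$ as a quotient of $G$. The key technical step is to equip $\bar G$ with a Polish group topology: starting from the Polish topology on $G$, one takes the quotient by the topological closure of $\ker \pi$ and argues that the extra identifications introduced by this closure only collapse vertices belonging to a countable set and do not disturb the graph $\Gamma$ beyond such a countable correction. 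Once $\bar G$ is shown to be Polish, the main theorem of \cite{paolini&shelah_cyclic} forces $\Gamma$ to be complete outside a countable set, giving (a). The main obstacle of the proof lies here: controlling the closure of $\ker \pi$ tightly enough that the quotient $\bar G$ still has $\Gamma$ (up to a countable correction) as its defining graph, so that the cyclic case can actually be invoked as a black box.

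\emph{Deriving (b), (c), (d).} For $a \in \Gamma - A$, the vertex $a$ is adjacent to every other vertex of $\Gamma$, so $G_a$ commutes with every $G_b$ with $b \neq a$. The graph product presentation then gives the algebraic splitting $G = G_a \times G(\Gamma - \{a\}, G_b)$, so $G_a$ is both a retract and a subgroup of the Polish group $G$; in particular $G_a$ inherits a Polish group topology. A Dudley-style automatic continuity argument applied to internal homomorphisms of $G$ rules out non-abelian $G_a$, and the structure theory of abelian Polish groups (their divisible parts, Pr\"ufer components $\mathbb{Z}_p^\infty$, finite cyclic summands $\mathbb{Z}_{p^k}$, and torsion-free divisible $\mathbb{Q}$-summands) yields the decomposition (b). The uniform exponent bound $n$ in (c) follows because $\bigoplus_{a \in \Gamma - A} G_a$ embeds in the separable Polish group $G$, and unbounded torsion orders across the $a$'s would contradict separability. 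Finally, for (d), when $A = \emptyset$ the group $G$ itself is $\bigoplus_{s \in S_*} G^*_{s, \mu_s}$ with $\mu_s = \sum_a \lambda_{a,s}$; each homogeneous summand is a closed subgroup, hence Polish, and a perfect-set / Baire-category argument applied to each such summand forbids cardinality strictly between $\aleph_0$ and $2^{\aleph_0}$.
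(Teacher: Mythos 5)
There is a genuine gap, and in fact the same misconception undermines all three parts of your plan: you repeatedly assume that algebraically natural pieces of $G$ (a cyclic quotient, a direct factor $G_a$, a homogeneous summand $G^*_{s,\mu_s}$) inherit a Polish group topology from the unknown topology on $G$. For (a), the quotient of $G$ by the \emph{closure} of $\ker\pi$ is Polish, but that closure is completely uncontrolled: you have no handle on which elements of $G(\Gamma,\bar G_a)$ get identified, so the resulting group is some quotient of $G(\Gamma,\bar G_a)$ that need not be a graph product over any correction of $\Gamma$, and the cyclic-case theorem cannot be invoked as a black box. The step you flag as "the main obstacle" is not a technical detail to be filled in later; it is essentially the whole problem. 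For (b), the claim that a universal vertex gives $G_a$ a Polish topology is refuted inside this very paper: Fact \ref{counter_ex} exhibits a direct summand of a Polish group that admits no Polish group topology. Even granting it, abelian Polish groups need not be bounded-divisible ($\mathbb{Z}$, or the $p$-adic integers, are abelian Polish and reduced), so no "structure theory of abelian Polish groups" yields decomposition (b); and your argument, if valid, would apply to \emph{every} universal vertex, which is false (take $\Gamma$ a single vertex with $G_a=\mathbb{Z}$: here the countable exceptional set $A$ must absorb $a$). For (d), the summand $G^*_{s,\mu_s}$ is not known to be closed or even Borel in the arbitrary topology on $G$, so there is no Polish group to run a perfect-set argument on.

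The paper's route is entirely different and avoids ever topologizing a piece of $G$. All the necessary conditions are extracted directly from the Polish topology on $G$ itself via Fact \ref{771_fact}: one approximately solves infinite systems of equations of the form $x_n=(x_{n+1})^{k(n)}d_n$ with the $d_n$ chosen very close to $e$ (using separability, Observation \ref{observation_prelim}), then projects the solution onto a finite or countable induced subgraph by the retraction of Fact \ref{retract_fact}, and derives a contradiction from normal-form combinatorics (Propositions \ref{prop_1}, \ref{pre_prop_2}, \ref{prop_2}) or from elementary abelian group theory. This yields (a) (Lemma \ref{crucial_lemma}), the countability of the set of non-abelian factors (Lemma \ref{not_abelian_is_countable}), and the key divisibility statement behind (b) and (c): only countably many $a$ can carry an element outside $K+Tor_{n!}(G_a)$ for every divisible $K$ (Lemma \ref{div_mod_tor}), which combined with Fact \ref{fuchs_fact} and Lemma \ref{elimination_Z_infty_p} gives the bounded-divisible form with a uniform exponent. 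For (d), Lemma \ref{card_invariants} builds $2^{\aleph_0}$ elements $h_{\mathcal{U}}$ as limits of Cauchy products indexed by $\mathcal{U}\subseteq\omega$ and separates them modulo a Borel equivalence relation via \cite[Lemma 13]{sh_for_CH}; this replaces your perfect-set argument and works without knowing that any summand is topologically definable. If you want to salvage your outline, the place to start is to replace every appeal to "inherits a Polish topology" with an argument carried out inside $(G,\mathfrak{d})$ along these lines.
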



	The following more involved theorems give more information on the possible graph products decompositions of a group $G$ admitting a Polish group topology, and it can be seen as a solution modulo a finite set of nodes to Problem \ref{problem}.

\begin{theorem}\label{th_second_venue} \begin{enumerate}[(1)]
	\item Let $G = G(\Gamma, G_a)$. If $G$ admits a Polish group topology, then there is $\bar{A} = (A_0, A_5, A_6, A_7, A_8, A_9)$ such that:
	\begin{enumerate}[(a)]
	\item\label{0} $\bar{A}$ is a partition of $\Gamma$;
	\item\label{1} for every $a \in \Gamma$ and $a \neq b \in \Gamma - A_0$, $a$ is adjacent to $b$;
	\item\label{b} $A_5$ and $A_6$ are finite;
	\item\label{c} $A_0$, $A_7$ and $A_8$ are countable, 
	\item\label{d} for each $a \in A_0$, $G_a$ is countable;
	\item\label{e} if $a \in A_7 \cup A_8$, then $G_a = H_a \oplus \bigoplus \{ G^*_{s, \lambda_{a, s}} : s \in S_{**} \}$, for some countable $H_a \leq G_a$;
	\item\label{f} if $a \in A_9$, then $G_a = \bigoplus \{ G^*_{s, \lambda_{a, s}} : s \in S_{**} \}$;
	\item\label{g} for each $s \in S_{**} - S_{*}$, $\sum\{\lambda_{a, s} : a \in A_7 \cup A_8 \cup A_9 \} \leq \aleph_0$;
	\item\label{leq_continuum} for each $s \in S_*$, $\sum\{\lambda_{a, s} : a \in A_7 \cup A_8 \cup A_9 \}$ is $\leq 2^{\aleph_0}$;
	\item\label{n_bound} for some $1 \leq n < \omega$ we have $\sum\{\lambda_{a, (p, k)} : a \in A_7 \cup A_8 \cup A_9 \} > \aleph_0 \Rightarrow p^k \mid n$;
	\item\label{i} we can {\em define explicitly} the $A_i$'s from $\{ G_a : a \in \Gamma \}$.
\end{enumerate}
\item Furthermore, if we assume CH and we let $\bar{A} = (A_0, A_5, A_6, A_7, A_8, A_9)$ be as above and $A = A_0 \cup A_7 \cup A_8 \cup A_9$, then $G(\Gamma \restriction A, G_a)$ admits a non-Archimedean Polish group topology.
\end{enumerate}
\end{theorem}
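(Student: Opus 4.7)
The plan is to prove part (1) as a structural refinement of Theorem~\ref{th_first_venue}, and part (2) by an explicit construction of the non-Archimedean Polish topology under CH.

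For part (1), I start from the countable exceptional set $A \subseteq \Gamma$ and the integer $n \geq 1$ provided by Theorem~\ref{th_first_venue}. I partition $\Gamma$ into $(A_0, A_5, A_6, A_7, A_8, A_9)$ by reading off isomorphism-type invariants of each $G_a$: $A_0$ is obtained from $A$ by keeping the vertices whose factor group is countable, while the remaining vertices of $A$ with uncountable $G_a$ form a finite set --- a pigeonhole-type argument on the $\lambda_{a,s}$, combined with Theorem~\ref{th_first_venue}(a), forces finiteness --- which is split as a pair $A_5, A_6$ by an asymmetric invariant to satisfy (\ref{i}). The vertices of $\Gamma - A$ are further split: $A_9$ consists of those $a$ for which some $\lambda_{a,s}$ is uncountable, and $A_7, A_8$ are the remaining, necessarily at most countably many, vertices whose factor group is still uncountable (contributing a nontrivial countable residue $H_a$ after rebracketing bounded cyclic summands). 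Clauses (\ref{0})--(\ref{d}) are then built into the definitions; (\ref{e}) and (\ref{f}) follow from Theorem~\ref{th_first_venue}(b) and the rebracketing; (\ref{g}) holds because an uncountable direct sum of Pr\"ufer $p$-groups $\mathbb{Z}^\infty_p$ inside $G$ would yield an uncountable-rank divisible subgroup, contradicting the $p$-torsion bound from Theorem~\ref{th_first_venue}(c); and (\ref{leq_continuum}), (\ref{n_bound}) are direct translations of Theorem~\ref{th_first_venue}(d),(c) applied to the ``clean'' subgroup $G(\Gamma \restriction (\Gamma - A_0), G_a)$. The definability clause (\ref{i}) is built into the choice.

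For part (2), assume CH and let $A = A_0 \cup A_7 \cup A_8 \cup A_9$. By (\ref{1}), every vertex in $A - A_0$ is adjacent in $\Gamma \restriction A$ to every other vertex of $A$, so $\Gamma \restriction A$ splits as a join and
\[ G(\Gamma \restriction A, G_a) \;\cong\; G(\Gamma \restriction A_0, G_a) \,\times\, \bigoplus_{a \in A - A_0} G_a. \]
The first factor is countable by (\ref{c}) and (\ref{d}) and carries the discrete non-Archimedean Polish topology. Using (\ref{e})--(\ref{f}) the second factor rewrites as $H^* \oplus \bigoplus_{s \in S_{**}} G^*_{s, \mu_s}$, with $H^*$ a countable direct sum (of the countable $H_a$'s and, by (\ref{g}), of the Pr\"ufer-type components $s \in S_{**} - S_*$), and $\mu_s \leq \aleph_1 = 2^{\aleph_0}$ for $s \in S_*$ by (\ref{leq_continuum}). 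For each $s \in S_*$ with $\mu_s = \aleph_1$ I realize $G^*_{s, \aleph_1}$ as a non-Archimedean Polish group via a natural isomorphism with a countable topological power of $G^*_s$: $\mathbb{Q}^{\mathbb{N}} \cong \bigoplus_{2^{\aleph_0}} \mathbb{Q}$ (as $\mathbb{Q}$-vector spaces of cardinality $2^{\aleph_0}$), $(\mathbb{Z}^\infty_p)^{\mathbb{N}} \cong \bigoplus_{2^{\aleph_0}} \mathbb{Z}^\infty_p$ (by divisibility and Ulm counts on the socle $\mathbb{Z}_p^{\mathbb{N}}$), and $\mathbb{Z}_{p^k}^{\mathbb{N}} \cong \bigoplus_{2^{\aleph_0}} \mathbb{Z}_{p^k}$ (since every $p^{k-1}$-torsion element of $\mathbb{Z}_{p^k}^{\mathbb{N}}$ is $p$-divisible, forcing the lower Ulm invariants to vanish in any cyclic decomposition). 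By (\ref{n_bound}) only finitely many $s \in S_*$ have $\mu_s > \aleph_0$, so the finite product of these non-Archimedean Polish pieces, together with the discrete countable factor, gives the required topology.

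The main obstacles I anticipate are, first, the asymmetric choice of the pairs $A_5, A_6$ and $A_7, A_8$ in part (1) in a way respecting (\ref{i}) and compatible with the downstream uses of the partition --- the splitting into pairs rather than single sets suggests a genuinely asymmetric invariant is involved, which has to be pinpointed; and second, the verification of (\ref{g}) in the presence of noncommuting factors in $A_0$, which will likely require a careful divisible-hull argument extending the techniques of \cite{paolini&shelah_cyclic}. For part (2), once the decomposition above is in place the construction is essentially routine given CH and the cardinal bounds.
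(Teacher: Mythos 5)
The central new content of Theorem \ref{th_second_venue} beyond Theorem \ref{th_first_venue} is the finiteness of $A_5$ and $A_6$, i.e.\ that only \emph{finitely} many vertices may carry a factor $G_a$ that is uncountable in a ``bad'' way. You derive this from ``a pigeonhole-type argument on the $\lambda_{a,s}$, combined with Theorem \ref{th_first_venue}(a)'', but Theorem \ref{th_first_venue} gives no information whatsoever about the factors $G_a$ for $a$ inside the countable exceptional set $A$ (clauses (b)--(c) there apply only to $a \in \Gamma - A$), so there are no $\lambda_{a,s}$ to pigeonhole on and, a priori, infinitely many $a \in A$ could have uncountable $G_a$. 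The paper needs two dedicated Polish-group arguments here (Lemmas \ref{finitely_many_uncount_factors} and \ref{finiteA6}): one builds a convergent infinite product $h_\infty = \lim_n h_0\cdots h_{n-1}$ with $h_n \in G_{a(n)}$ chosen close to $e$ but non-central, and derives a contradiction from $a(n) \notin sp(h_\infty)$ for some $n$; the other solves the system $x_n = (x_{n+1})^{n+1}h_n$ and projects onto a single factor. Moreover, the dividing line between the finite sets $A_5, A_6$ and the countable sets $A_7, A_8$ is given by concrete invariants --- $[G_a : Cent(G_a)]$ uncountable for $A_5$, versus $[G_a:Cent(G_a)]$ countable but $Cent(G_a)/(Div(Cent(G_a)) + Tor_n(Cent(G_a)))$ uncountable for $A_6$ --- and you explicitly leave this ``asymmetric invariant \dots to be pinpointed''; without identifying it, the finiteness clause, the definability clause, and the decomposition $G_a = H_a \oplus \bigoplus\{G^*_{s,\lambda_{a,s}} : s\in S_{**}\}$ for $a \in A_7\cup A_8$ cannot be verified. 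That decomposition also requires the algebraic Fact \ref{last_abelian_fact_second_venue}, for which ``rebracketing bounded cyclic summands'' is not a substitute.

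Two further steps fail as written. For the bound $\sum\{\lambda_{a,s} : a \in A_7\cup A_8\cup A_9\} \leq \aleph_0$ when $s \in S_{**}-S_*$ you argue that uncountably many Pr\"ufer summands $\mathbb{Z}^\infty_p$ would ``contradict the $p$-torsion bound from Theorem \ref{th_first_venue}(c)''; but that bound concerns only the finite cyclic summands $\mathbb{Z}_{p^k}$ and is perfectly compatible with an uncountable divisible torsion summand, so there is no contradiction to be had --- the paper needs the separate Lemma \ref{elimination_Z_infty_p}, proved by solving $x_n = (x_{n+1})^{p^{k(n)}}t_n$ and tracking orders. Also, in part (2) the asserted isomorphism $(\mathbb{Z}^\infty_p)^{\mathbb{N}} \cong \bigoplus_{2^{\aleph_0}}\mathbb{Z}^\infty_p$ is false, since $(\mathbb{Z}^\infty_p)^{\mathbb{N}}$ is not a torsion group; this piece happens to be unnecessary (those summands are countable by the previous clause), and the remainder of your part (2) --- splitting off the countable join factor $G(\Gamma\restriction A_0, G_a)$ and realizing $\mathbb{Q}^{\mathbb{N}}$ and $\mathbb{Z}_{p^k}^{\mathbb{N}}$ as non-Archimedean Polish groups under CH --- is correct and matches the paper's construction in substance.
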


	\begin{theorem}\label{new_th_second_venue} \begin{enumerate}[(1)]
	\item For given $G = G(\Gamma, G_a)$ the following conditions are equivalent:
	\begin{enumerate}[(a)]
	\item for some finite $B_1 \subseteq \Gamma$, for every finite $B_2$ such that $B_1 \subseteq B_2 \subseteq \Gamma$, $G(\Gamma \restriction \Gamma - B_2, G_a)$ admits a Polish group topology;
	\item there is $\bar{A}$ as in Theorem \ref{th_second_venue} and for some finite $B \supseteq A_5 \cup A_6$, for every $s \in S_*$ the cardinal $\lambda_s^B = \sum \{ \lambda_{a, s} : a \in (A_7 \cup A_8 \cup A_9) - B \}$ \mbox{is either $\aleph_0$ or $2^{\aleph_0}$.}
\end{enumerate}
	\item If $B_0 \subseteq \Gamma$ is finite, $\bar{A}$ is as in Theorem \ref{th_second_venue} for $G(\Gamma \restriction \Gamma - B_0, G_a)$ and we let $B_1 = B_0 \cup A_5 \cup A_6$ (which is a finite subset of $\Gamma$), then the following conditions on $B \subseteq \Gamma - B_1$ are equivalent:
	\begin{enumerate}[(a)]
	\item $G(\Gamma \restriction B)$ admits a Polish group topology;
	\item for every $s \in S_*$ the cardinal $\lambda_s^B = \sum \{ \lambda_{a, s} : a \in B \}$
is either $\aleph_0$ or $2^{\aleph_0}$.
\end{enumerate}
\end{enumerate}	
\end{theorem}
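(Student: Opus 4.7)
The plan is to derive Theorem~\ref{new_th_second_venue} by combining Theorem~\ref{th_second_venue} applied to various cofinite subgraphs with the cardinal dichotomy of Theorem~\ref{th_first_venue}(d); the explicit-definability clause~(\ref{i}) of Theorem~\ref{th_second_venue} is the glue that lets us compare the partitions $\bar A$ arising from different subgraphs.

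For part~(1), direction (a)$\Rightarrow$(b): pick a finite $B_2\supseteq B_1$ witnessing (a), apply Theorem~\ref{th_second_venue}(1) to the Polish group $G'=G(\Gamma\restriction(\Gamma-B_2),G_a)$ to obtain a partition $\bar A'=(A_0',A_5',\ldots,A_9')$ of $\Gamma-B_2$, and use clause~(\ref{i}) to compare $\bar A'$ with the explicitly defined partition $\bar A$ of $\Gamma$. Only vertices whose classification depends on their adjacency to the finite set $B_2$ can change class, so $\bar A$ and $\bar A'$ agree outside a finite set; in particular $A_5$ and $A_6$ are finite. To upgrade the bound $\lambda_s^{B_2}\le 2^{\aleph_0}$ from~(\ref{leq_continuum}) to the dichotomy demanded by (b), I enlarge $B_2$ finitely to $B$ so as to absorb every vertex $a$ with $\aleph_0<\lambda_{a,s}<2^{\aleph_0}$ for some $s\in S_*$. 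There can be only finitely many such vertices once $A_5\cup A_6$ has been absorbed: otherwise, removing further finite subsets and applying Theorem~\ref{th_first_venue}(d) to a cofinite subgraph (whose exceptional set $A$ is empty after the necessary finite adjustment, by clause~(\ref{1}) of Theorem~\ref{th_second_venue}) would violate the dichotomy in~(d).

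For direction (b)$\Rightarrow$(a): set $B_1:=B$. For any finite $B_2\supseteq B_1$, the restricted sums $\sum\{\lambda_{a,s}:a\in(A_7\cup A_8\cup A_9)-B_2\}$ remain either $\aleph_0$ or $2^{\aleph_0}$, because removing finitely many vertices does not change these cardinalities; conditions~(\ref{g})--(\ref{n_bound}) are preserved for the same reason. Consequently the construction underlying Theorem~\ref{th_second_venue}(2) equips $G(\Gamma\restriction(\Gamma-B_2),G_a)$ with a Polish group topology (possibly under CH, as in Theorem~\ref{th_second_venue}(2)).

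Part~(2) is obtained by applying part~(1) to the subgraph $\Gamma\restriction(\Gamma-B_0)$ and rephrasing: any $B\subseteq\Gamma-B_1$ satisfying (a) of~(2) corresponds to a cofinite subgraph of $\Gamma-B_0$, reducing to the equivalence established in part~(1). The principal obstacle is the cardinal dichotomy in (a)$\Rightarrow$(b): Theorem~\ref{th_first_venue}(d) applies only when its exceptional set $A$ is empty, so one must carefully exploit that removing any finite set still yields a Polish group to force $A$ to vanish (up to finite adjustment) and only then read off the dichotomy.
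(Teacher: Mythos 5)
Your proposal has two genuine gaps, one in each direction. For (b)$\Rightarrow$(a) you assert that for any finite $B_2\supseteq B_1$ the sums $\sum\{\lambda_{a,s}:a\in(A_7\cup A_8\cup A_9)-B_2\}$ ``remain either $\aleph_0$ or $2^{\aleph_0}$, because removing finitely many vertices does not change these cardinalities.'' That is false: a single removed vertex can carry the whole of $2^{\aleph_0}$. For instance (under $\neg$CH) take $A_9=\{a_0\}\cup\{a_\alpha:\alpha<\omega_1\}$ with $\lambda_{a_0,s}=2^{\aleph_0}$ and $\lambda_{a_\alpha,s}=1$; the total is $2^{\aleph_0}$ but deleting $a_0$ leaves $\aleph_1$. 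Handling exactly this phenomenon (how the witnessing finite sets in (a) and (b) must be related) is where the content of the equivalence lies, and your argument skips it. Your parenthetical ``possibly under CH'' is also a red flag: the dichotomy in clause (b) is precisely the hypothesis that replaces CH in the construction of Theorem~\ref{th_second_venue}(2) (it is only used there to force $\lambda\in\{0,2^{\aleph_0}\}$), so invoking CH here would make the statement vacuous relative to Theorem~\ref{th_second_venue} rather than prove it; the paper's intent is that, given (b), one reruns that construction with no set-theoretic hypothesis.

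For (a)$\Rightarrow$(b) --- the direction the paper dismisses as clear --- your key claim, that after absorbing $A_5\cup A_6$ there can be only finitely many $a$ with $\aleph_0<\lambda_{a,s}<2^{\aleph_0}$, is false: $2^{\aleph_0}$ many vertices each with $\lambda_{a,s}=\aleph_1$ give total $2^{\aleph_0}$ after deleting any finite set, so (a) can hold with infinitely many ``intermediate'' vertices, and then no finite enlargement of $B_2$ absorbs them. Moreover the absorption would not achieve the goal even if it were possible, since the dichotomy in (b) concerns the \emph{sum} $\lambda_s^B$, which can be intermediate (e.g.\ $\aleph_1$) even when every individual $\lambda_{a,s}$ is finite. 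The intended argument is much shorter: fix one finite $B\supseteq B_1\cup A_5\cup A_6$; by (a) and Theorem~\ref{th_second_venue}, $G(\Gamma\restriction\Gamma-B,G_a)$ is Polish and decomposes as a countable group plus $\bigoplus\{G^*_{s,\lambda^B_s}:s\in S_*\}$, so Lemma~\ref{card_invariants} yields the dichotomy for every $s$ simultaneously for this single $B$. Your idea of comparing the partitions $\bar A$ of $\Gamma$ and of $\Gamma-B_2$ via the explicit definability clause is sound and is implicitly what the paper does, but the cardinal-arithmetic core of both directions needs to be redone.
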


	\begin{remark}\label{no_char} Let:
	\begin{enumerate}[(a)]
	\item $s \in S_*$;
	\item $\aleph_0 < \lambda < 2^{\aleph_0}$;
	\item $\Gamma$ a complete graph on $\omega_1$;
	\item $G_0 = G_{s, 2^{\aleph_0}} \oplus G_{*}$;
	\item $G_*$ an uncountable centerless group admitting a Polish group topology;
	\item $G_{\alpha} = G_{s, \lambda}$, for $\alpha \in [1, \omega_1)$.
\end{enumerate} 
Then $G(\Gamma, G_a)$ admits a Polish group topology, but letting 
$\bar{A}$ be the partition from Theorem \ref{th_second_venue} we have that $\sum\{\lambda_{a, s} : a \in A_7 \cup A_8 \cup A_9 \} = \lambda < 2^{\aleph_0}$, and so for $A = A_0 \cup A_7 \cup A_8 \cup A_9$, we have that $G(\Gamma \restriction A, G_a)$ does {\em not} admit a Polish group topology (in fact in this case $A_0 = A_6 = A_7 = A_8 = \emptyset$, $A_5 = \{ 0 \}$ and $A_9 = [1, \omega_1)$, cf. the explicit definition of the $A_i$'s in the proof of Theorem \ref{th_second_venue}).
\end{remark}



	From our theorems and their proofs we get the following corollaries.
	
	\begin{corollary}\label{cor_third_venue} Let $G = G(\Gamma, G_a)$ with all the $G_a$ countable. Then $G$ admits a Polish group topology if and only if $G$ admits a non-Archimedean Polish group topology if and only if there exist a countable $A \subseteq \Gamma$ and $1 \leq n < \omega$ such that:
	\begin{enumerate}[(a)]
	\item for every $a \in \Gamma$ and $a \neq b \in \Gamma - A$, $a$ is adjacent to $b$;
	\item if $a \in \Gamma - A$, then $G_a = \bigoplus \{ G^*_{s, \lambda_{a, s}} : s \in S_* \}$;
	\item if $\lambda_{a, (p, k)} > 0$, then $p^k \mid n$;
	\item for every $s \in S_*$, $\sum\{\lambda_{a, s} : a \in \Gamma - A \}$ is either $\leq \aleph_0$ or $2^{\aleph_0}$.
\end{enumerate}
\end{corollary}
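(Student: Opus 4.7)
The corollary is a three-way equivalence; since non-Archimedean Polish trivially implies Polish, the task reduces to showing that Polish implies (a)--(d) and conversely. For the forward direction, I would first apply Theorem \ref{th_second_venue}(1) to produce a partition $\bar{A} = (A_0, A_5, A_6, A_7, A_8, A_9)$ and an integer $n$; under the countability hypothesis on each $G_a$ the theorem already makes $A_5 \cup A_6$ finite and $A_0 \cup A_7 \cup A_8$ countable. Two further countable subsets of $A_9$ must be absorbed into the exceptional part: those $a \in A_9$ contributing a Pr\"ufer summand $G^*_p$ (countable in total by the bound $\sum \lambda_{a, s} \leq \aleph_0$ for $s \in S_{**} - S_*$), and those $a \in A_9$ contributing some summand $G^*_{(p,k)}$ with $p^k \nmid n$ (a countable union of countable sets, via the exponent-bound item of Theorem \ref{th_second_venue}). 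Their union with $A_0 \cup A_5 \cup A_6 \cup A_7 \cup A_8$ gives the required countable $A$ satisfying (a), (b), and (c).

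Condition (d) is extracted as follows. The adjacency clause (a) gives the internal direct product $G \cong G(\Gamma \restriction A, G_a) \times G(\Gamma \restriction (\Gamma - A), G_a)$, in which the first factor is a countable graph product of countable groups. I then invoke Theorem \ref{new_th_second_venue}(2) with $B_0 = \emptyset$ and $B = \Gamma - A$: it translates the desired $\aleph_0$-or-$2^{\aleph_0}$ dichotomy for $\sum \lambda_{a, s}$ into the existence of a Polish topology on $G(\Gamma \restriction (\Gamma - A), G_a)$, which is in turn supplied by the decomposition of $G$ above together with the countability of the complementary factor. For the reverse direction, assuming (a)--(d), I use the same factorisation $G \cong G(\Gamma \restriction A, G_a) \times \bigoplus\{G^*_{s, \Lambda_s} : s \in S_*\}$, equip the countable first factor with its discrete (non-Archimedean Polish) topology, and on each $G^*_{s, \Lambda_s}$ take either the discrete topology when $\Lambda_s \leq \aleph_0$ or, when $\Lambda_s = 2^{\aleph_0}$, the explicit non-Archimedean Polish realisations of $\mathbb{Q}^{(2^{\aleph_0})}$ and $\mathbb{Z}_{p^k}^{(2^{\aleph_0})}$ constructed in the analysis of cyclic graph products. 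The uniform exponent bound from (c) makes only finitely many $(p, k)$-summands nontrivial in the uncountable case, so the product topology yields a well-defined non-Archimedean Polish topology on $G$.

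The principal obstacle is the dichotomy step in the forward direction. Theorem \ref{th_second_venue} on its own gives only $\Lambda_s \leq 2^{\aleph_0}$; promoting this to the $\aleph_0$-or-$2^{\aleph_0}$ dichotomy via Theorem \ref{new_th_second_venue}(2) forces one to exhibit a Polish topology on $G(\Gamma \restriction (\Gamma - A), G_a)$, which is where the hypothesis "all $G_a$ countable" does real work by making the complementary factor $G(\Gamma \restriction A, G_a)$ countable. Remark \ref{no_char} shows that the analogous dichotomy can genuinely fail once the countability of the factors is dropped, so the countable-factor case is precisely the setting in which this clean characterisation becomes available.
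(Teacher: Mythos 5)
Your forward direction has a genuine gap at condition (d). To get the $\aleph_0$-or-$2^{\aleph_0}$ dichotomy you route through Theorem \ref{new_th_second_venue}(2), whose direction (a)$\Rightarrow$(b) requires you to first exhibit a Polish group topology on $G(\Gamma \restriction (\Gamma - A), G_a)$; you claim this is ``supplied by the decomposition $G \cong G(\Gamma\restriction A, G_a) \times G(\Gamma\restriction(\Gamma-A), G_a)$ together with the countability of the complementary factor.'' That inference --- a direct summand of a Polish group with countable complement again admits a Polish group topology --- is not justified anywhere and is not obvious: the summand need not be closed (countable subgroups of Polish groups need not be closed, and subgroups of countable index need not have the Baire property, so Pettis-type arguments do not apply), and the paper explicitly treats the general behaviour of direct summands of Polish groups as open (Conjecture \ref{conj_summands}). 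The paper avoids this entirely: condition (d) comes from Lemma \ref{card_invariants}, which is stated for $G = G_1 \oplus G_2$ with $G_1$ countable and $G_2 = \bigoplus\{G^*_{s,\lambda_s} : s \in S_*\}$ and works \emph{inside the Polish topology of $G$ itself} (Cauchy sequences $h_{\mathcal{U}}$, the Borel equivalence relations $E_\infty$ and $E_{(p,n)}$, and \cite[Lemma 13]{sh_for_CH}), never requiring $G_2$ to carry a Polish topology. Since clause (a) makes $G = G(\Gamma\restriction A, G_a) \oplus \bigoplus_{a \in \Gamma - A} G_a$ with the first summand countable (here the hypothesis that all $G_a$ are countable does its real work), Lemma \ref{card_invariants} applies directly and closes the gap.

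The rest of your argument is sound and close to the paper's. For (a)--(c) the paper simply quotes Theorem \ref{th_first_venue}, which already delivers a countable $A$ with exactly those three clauses; your alternative of starting from Theorem \ref{th_second_venue} and absorbing into $A$ the finitely many nodes of $A_5 \cup A_6$, the countably many $a \in A_9$ carrying a Pr\"ufer summand (clause (h) of that theorem), and the countably many $a$ carrying a $G^*_{(p,k)}$-summand with $p^k \nmid n$ (clause (j)) is correct but a detour. Your sufficiency argument (discrete topology on the countable part, $\mathbb{Q}^{\omega}$ and $\mathbb{Z}_{p^k}^{\omega}$ realised as automorphism groups of countable structures, finitely many uncountable summands thanks to the exponent bound) is exactly what the paper does by referring back to the end of the proof of Theorem \ref{th_second_venue}.
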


	\begin{corollary}\label{cor_bounded_divis} Let $G$ be an abelian group which is a direct sum of countable groups, then $G$ admits a Polish group topology if only if $G$ admits a non-Archimedean Polish group topology if and only if there exists a countable $H \leq G$ and $1 \leq n < \omega$ such that:
	$$G = H \oplus \bigoplus_{\alpha < \lambda_{\infty}} \mathbb{Q} \oplus \bigoplus_{p^k \mid n} \bigoplus_{\alpha < \lambda_{(p, k)}} \mathbb{Z}_{p^k},$$
with $\lambda_{\infty}$ and $\lambda_{(p, k)} \leq \aleph_0$ or $2^{\aleph_0}$.
\end{corollary}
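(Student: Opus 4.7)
The plan is to identify $G$ with a graph product and appeal to Corollary \ref{cor_third_venue}. Write $G = \bigoplus_{a \in I} G_a$ with each $G_a$ a countable nontrivial group (discarding trivial factors). Letting $\Gamma$ be the complete graph on $I$, the defining relations of the graph product over a complete graph force all pairs of factors to commute, so $G \cong G(\Gamma, G_a)$, and since each $G_a$ is countable the hypotheses of Corollary \ref{cor_third_venue} are available.

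For the forward direction, assume $G$ admits a Polish group topology. Corollary \ref{cor_third_venue} yields a countable $A \subseteq \Gamma$ and $1 \leq n < \omega$ satisfying clauses (a)--(d). Set $H = \bigoplus_{a \in A} G_a$, which is countable as a countable direct sum of countable groups. For $a \in \Gamma - A$, clause (b) gives $G_a = \bigoplus_{s \in S_*} G^*_{s, \lambda_{a,s}}$; swapping the order of summation,
$$\bigoplus_{a \in \Gamma - A} G_a \;\cong\; \bigoplus_{\alpha < \lambda_{\infty}} \mathbb{Q} \,\oplus\, \bigoplus_{(p,k) \in S_* - \{\infty\}} \bigoplus_{\alpha < \lambda_{(p,k)}} \mathbb{Z}_{p^k},$$
where $\lambda_s = \sum_{a \in \Gamma - A} \lambda_{a,s}$. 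Clause (c) forces $\lambda_{(p,k)} = 0$ unless $p^k \mid n$, so only the finitely many divisors of $n$ contribute; clause (d) ensures each $\lambda_s$ is $\leq \aleph_0$ or $= 2^{\aleph_0}$. Combining with $H$ produces the decomposition claimed.

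For the converse, given a decomposition of the stated form, I would build $\Gamma'$ as the complete graph on $\{a_0\} \sqcup I_{\infty} \sqcup \bigsqcup_{p^k \mid n} I_{(p,k)}$ with $|I_{\infty}| = \lambda_{\infty}$ and $|I_{(p,k)}| = \lambda_{(p,k)}$, and assign $G_{a_0} = H$, $G_a = \mathbb{Q}$ for $a \in I_\infty$, and $G_a = \mathbb{Z}_{p^k}$ for $a \in I_{(p,k)}$, so that $G \cong G(\Gamma', G_a)$. Taking $A = \{a_0\}$, clauses (a)--(d) of Corollary \ref{cor_third_venue} are immediate: (a) by completeness of $\Gamma'$, (b) by the choice of $G_a$, (c) by $p^k \mid n$, and (d) by the hypothesis on the $\lambda$'s. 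Corollary \ref{cor_third_venue} then produces a non-Archimedean Polish group topology on $G$, a fortiori a Polish one, and gives the equivalence of the Polish and non-Archimedean Polish versions.

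No genuine obstacle arises: the substantive work is encapsulated in Corollary \ref{cor_third_venue}, and what remains is the bookkeeping translation between an abelian direct sum of countable groups and a graph product over the complete graph. The only point demanding a modicum of care is observing, via clause (c), that only finitely many cyclic summand types can survive in the forward direction, which is what justifies the finite index set $\{(p,k) : p^k \mid n\}$ in the statement.
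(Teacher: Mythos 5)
Your proposal is correct and follows the same route as the paper, which simply derives this corollary from Corollary \ref{cor_third_venue} by viewing $G$ as a graph product over a complete graph; you have merely filled in the bookkeeping that the paper leaves implicit. Nothing further is needed.
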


	\begin{corollary}[Slutsky \cite{slutsky}]\label{corollary_free_prod} If $G$ is an uncountable group admitting a Polish group topology, then $G$ can not be expressed as a non-trivial free product.
\end{corollary}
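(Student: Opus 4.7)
My plan is to observe that a non-trivial free product is precisely a graph product over an edgeless graph, and then apply Theorem \ref{th_second_venue} directly. Explicitly, if $G = \ast_{i \in I} A_i$ with $|I| \geq 2$ and every $A_i$ non-trivial, then $G = G(\Gamma, G_a)$ where $\Gamma = (I, \emptyset)$ has no edges and $G_i = A_i$. Assuming $G$ is uncountable and carries a Polish group topology, Theorem \ref{th_second_venue}(1) supplies a partition $\bar{A} = (A_0, A_5, A_6, A_7, A_8, A_9)$ of $\Gamma$ satisfying the listed clauses.

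The first step will be to apply clause (b). If there were some $b \in \Gamma - A_0$, every other vertex of $\Gamma$ would have to be adjacent to $b$; since $\Gamma$ has no edges and $|\Gamma| \geq 2$, this forces $\Gamma - A_0 = \emptyset$, that is, $A_0 = \Gamma$. The second step is to read off clauses (c) and (d): $A_0$ is countable and each $G_a$ for $a \in A_0$ is countable. Hence $I = \Gamma$ is countable and every factor $A_i$ is countable. A standard normal-form argument then yields that the free product of countably many countable groups is itself countable, contradicting $|G| > \aleph_0$.

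No genuine obstacle arises here: once a free product is reinterpreted as a graph product over an edgeless graph, clauses (b) and (d) of Theorem \ref{th_second_venue} force all factors into the exceptional part $A_0$ and then declare each of them countable. The content of Slutsky's theorem is in this way absorbed into the graph-product dichotomy established earlier in the paper; note that Theorem \ref{th_first_venue} alone would not suffice, since it says nothing about the structure of $G_a$ for $a$ in the exceptional set.
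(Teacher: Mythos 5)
Your proof is correct and in substance identical to the paper's: the paper derives this corollary directly from Corollary \ref{complete_outside_countable} (which, for an edgeless graph, forces $\Gamma$ to be countable) and Lemma \ref{lemma_unc_factors} (which, given a non-adjacent pair, forces every factor to be countable), and these are precisely the two facts underlying the clauses (b), (c), (d) of Theorem \ref{th_second_venue} that you invoke before concluding that a free product of countably many countable groups is countable. The only difference is that you route the argument through the packaged Theorem \ref{th_second_venue} rather than citing the two constituent lemmas directly.
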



	 The following problem gets in the way of a complete characterization of the groups $G = G(\Gamma, G_a)$ admitting a Polish group topology in the case no further assumptions are made on the factors $G_a$. We have:
	
	\begin{fact}\label{counter_ex} Let $s_1 \neq s_2 \in S_{*}$ and $\lambda$ a cardinal (cf. Notation \ref{def_bounded}).
\begin{enumerate}[(1)]
\item If $\aleph_0 < \lambda < 2^{\aleph_0}$, then $G_{s_1, \lambda} \oplus G_{s_1, 2^{\aleph_0}} \cong G_{s_1, 2^{\aleph_0}}$ admits a Polish group topology, but $G_{s_1, \lambda}$ does not admit one such topology.
\item If $\aleph_0 < \lambda < 2^{\aleph_0}$, $H_1 = G_{s_1, 2^{\aleph_0}} \oplus G_{s_2, \lambda}$ and $H_2 = G_{s_1, \lambda} \oplus G_{s_2, 2^{\aleph_0}}$, then $H_1 \oplus H_2$ admits a Polish group topology, but neither $H_1$ nor $H_2$ admit one such topology.
\end{enumerate}
\end{fact}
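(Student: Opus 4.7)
The plan is to handle the two parts separately, addressing the positive and negative claims within each.

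\textbf{Part (1).} The isomorphism $G^*_{s_1, \lambda} \oplus G^*_{s_1, 2^{\aleph_0}} \cong G^*_{s_1, 2^{\aleph_0}}$ is formal: both sides are direct sums of $\lambda + 2^{\aleph_0} = 2^{\aleph_0}$ copies of $G^*_{s_1}$, per Notation \ref{def_bounded}(3). To exhibit a Polish group topology on $G^*_{s_1, 2^{\aleph_0}}$ I split on $s_1 \in S_*$: if $s_1 = \infty$, then $\mathbb{Q}^{(2^{\aleph_0})}$ and $\mathbb{R}$ are $\mathbb{Q}$-vector spaces of the same (continuum) dimension and hence are abstractly isomorphic, so the usual Polish topology on $\mathbb{R}$ transfers; if $s_1 = (p,k)$, a short Ulm-invariant computation on the bounded $p$-group $\mathbb{Z}_{p^k}^\omega$ shows that $\mathbb{Z}_{p^k}^\omega \cong \mathbb{Z}_{p^k}^{(2^{\aleph_0})} = G^*_{(p,k), 2^{\aleph_0}}$ (the only nonzero Ulm invariant is $\mu_k = 2^{\aleph_0}$), so the compact product topology on $\mathbb{Z}_{p^k}^\omega$ transfers. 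For the non-admissibility of $G^*_{s_1, \lambda}$ the argument is purely set-theoretic: $|G^*_{s_1, \lambda}| = \lambda$ lies strictly between $\aleph_0$ and $2^{\aleph_0}$, while no Polish space can have such intermediate cardinality.

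\textbf{Part (2).} For the positive claim, collapsing summands using $\lambda + 2^{\aleph_0} = 2^{\aleph_0}$ yields $H_1 \oplus H_2 \cong G^*_{s_1, 2^{\aleph_0}} \oplus G^*_{s_2, 2^{\aleph_0}}$, which is a direct product of two Polish groups by Part (1), and hence Polish.

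The main obstacle is the negative claim that $H_1$ does not admit a Polish topology, since cardinality is useless here ($|H_1| = 2^{\aleph_0}$). I would argue by contradiction using Theorem \ref{new_th_second_venue}(1). Writing $H_1 = G(\Gamma, G_a)$ with $\Gamma$ the complete graph indexing the direct summands, the key observation is that for every finite $B \subseteq \Gamma$ we have $H_1 \restriction \Gamma - B \cong H_1$ as abelian groups: removing finitely many copies of $G^*_{s_1}$ or $G^*_{s_2}$ from an infinite direct sum of copies of $G^*_{s_1}$ and $G^*_{s_2}$ leaves both $\lambda$ and $2^{\aleph_0}$ unchanged. Hence, if $H_1$ admitted a Polish topology, condition (a) of Theorem \ref{new_th_second_venue}(1) would hold with $B_1 = \emptyset$, forcing (b) to hold: some partition $\bar{A}$ as in Theorem \ref{th_second_venue} and some finite $B \supseteq A_5 \cup A_6$ would satisfy $\lambda_s^B \in \{\aleph_0, 2^{\aleph_0}\}$ for every $s \in S_*$. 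But in any such partition $A_0 \cup A_5 \cup A_6 \cup A_7 \cup A_8$ is countable (by items (b), (c) of Theorem \ref{th_second_venue}(1)), so $A_9$ contains all but countably many nodes of $\Gamma$ and in particular $\lambda$ nodes of type $s_2$; consequently $\lambda_{s_2}^B = \lambda$ for every finite $B$, contradicting $\lambda \notin \{\aleph_0, 2^{\aleph_0}\}$. The assertion for $H_2$ follows by swapping the roles of $s_1$ and $s_2$.
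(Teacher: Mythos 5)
Your proof is correct. The paper records this statement as a Fact without proof, so there is no in-text argument to compare against; still, it is worth locating your argument relative to the paper's own machinery. The positive halves and the cardinality obstruction in (1) are exactly as one would expect (indeed the proof of Theorem \ref{th_second_venue}(2) realizes $\mathbb{Q}^{(2^{\aleph_0})}$ and $\mathbb{Z}_{p^k}^{(2^{\aleph_0})}$ as automorphism groups of countable structures; your realizations via $\mathbb{R}$ as a $\mathbb{Q}$-vector space and via the compact group $\mathbb{Z}_{p^k}^{\omega}$ are even more elementary, and correctly exploit that $S_*$ excludes the Pr\"ufer groups, which by Lemma \ref{elimination_Z_infty_p} would \emph{not} work). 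For the one genuinely nontrivial point --- that $H_1$ admits no Polish group topology, where cardinality says nothing --- you route through Theorem \ref{new_th_second_venue}(1). This does go through (the only implicit step is that a vertex carrying $G^*_{s_1}$ contributes $\lambda_{a,s_2}=0$, which holds since the $G^*_s$, $s\in S_{**}$, are pairwise non-isomorphic indecomposables), but it is heavier than needed and leans on a theorem whose proof the paper itself only sketches. The same conclusion follows in one line from Lemma \ref{card_invariants}, which is visibly the tool designed for this: take $G_1=\{e\}$ and $G_2=H_1=\bigoplus\{G^*_{s,\lambda_s}: s\in S_*\}$ with $\lambda_{s_1}=2^{\aleph_0}$ and $\lambda_{s_2}=\lambda$; the lemma forces $\lambda\leq\aleph_0$ or $\lambda=2^{\aleph_0}$, a contradiction. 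That shorter route also keeps the Fact logically independent of Theorems \ref{th_second_venue} and \ref{new_th_second_venue}, which is preferable given that the Fact is stated in the introduction as motivation.
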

	
	Hence, a general characterization seems to depend on the failure of CH. Despite this, our impression is that CH would not help. This leads to a series of conjectures on the possible direct summands of a Polish group $G$:
	
	\begin{conjecture}[Polish Direct Summand Conjecture]\label{conj_summands} Let $G$ be a group admitting a Polish group topology.
	\begin{enumerate}[(1)]
	\item If $G$ has a direct summand isomorphic to $G^*_{s, \lambda}$, for some $\aleph_0 < \lambda \leq 2^{\aleph_0}$ and $s \in S_*$, then it has one of cardinality $2^{\aleph_0}$.
	\item If $G = G_1 \oplus G_2$ and $G_2 = \bigoplus \{ G^*_{s, \lambda_{s}} : s \in S_* \}$, then for some $G'_1, G'_2$ we have:
	\begin{enumerate}[(i)]
	\item $G_1 = G'_1 \oplus G'_2$;
	\item $G'_1$ admits a Polish group topology;
	\item $G'_2 = \bigoplus \{ G^*_{s, \lambda'_{s}} : s \in S_* \}$.
	\end{enumerate}
	\item If $G = G_1 \oplus G_2$, then for some $G'_1, G'_2$ we have:
	\begin{enumerate}[(i)]
	\item $G_1 = G'_1 \oplus G'_2$;
	\item $G'_1$ admits a Polish group topology;
	\item $G'_2 = \bigoplus \{ G^*_{s, \lambda_{s}} : s \in S_* \}$.
	\end{enumerate}
	\end{enumerate}
\end{conjecture}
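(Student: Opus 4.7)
The plan is to focus first on part (1), since parts (2) and (3) are successive strengthenings that I expect to follow by iteration once (1) is in hand. For part (1), given a Polish group topology on $G$ and a decomposition $G = G^*_{s,\lambda} \oplus H$ with $\aleph_0 < \lambda \leq 2^{\aleph_0}$, the strategy has two steps. First, identify a \emph{definable} (Borel) subgroup $K_s \leq G$ capturing the ``type-$s$ part'' of $G$: for $s = (p,k)$ the $p^k$-torsion $\{g \in G : p^k g = e\}$, intersected with the centralizer of the given summand to keep things abelian; for $s = \infty$ the maximal divisible subgroup $\bigcap_n nG$; for $s = p$ the maximal $p$-divisible $p$-torsion subgroup. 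Such $K_s$ is analytic, so by the perfect set property for analytic subsets of a Polish space its cardinality is either countable or $2^{\aleph_0}$, and the presence of $G^*_{s,\lambda}$ inside $K_s$ with $\lambda > \aleph_0$ forces the latter.

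The more delicate second step is to \emph{extract} from $|K_s| = 2^{\aleph_0}$ an honest direct summand of $G$ isomorphic to $G^*_{s, 2^{\aleph_0}}$. The hope is to combine continuity input in the Dudley--Shelah line (applied to the given projection $G \to G^*_{s,\lambda}$ and to candidate Borel extensions of it) with classical splitting theorems for abelian groups: divisibility, hence injectivity, of $G^*_{\infty}$ and of $G^*_p$ yields splitting for free once the right injection into $G$ is exhibited, while the bounded case $s = (p,k)$ would lean on a Pr\"ufer-style result that pure subgroups of bounded exponent are direct summands. For parts (2) and (3), the strategy is iterative: peel off from $G_1$, one $s \in S_*$ at a time, a maximal summand of $G_1$ of the form $G^*_{s, \kappa}$, using (1) applied to the ambient Polish structure on $G$ to force $\kappa$ to be countable or $2^{\aleph_0}$; then verify that the residual complement in $G_1$ inherits a Polish topology via a quotient or continuous-section argument. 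Part (3) drops the tabulated-form hypothesis on $G_2$ and should reduce to part (2) by first applying (1) to $G_2$ itself.

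The main obstacle I expect is precisely the algebraic extraction inside part (1): passing from ``$K_s$ has cardinality $2^{\aleph_0}$'' to ``$G$ admits $G^*_{s, 2^{\aleph_0}}$ as a genuine \emph{direct summand}'' is not automatic, because the standard abelian-group splitting theorems (Pr\"ufer, Ulm, algebraic compactness) require countability, purity, or completeness hypotheses that are not a priori available for an arbitrary Borel subgroup of a Polish group. A topological substitute---for instance, constructing a continuous Borel section of a suitable projection along a perfect subset of $K_s$, or a back-and-forth on a perfect independent family---would most likely be needed. Finally, Fact \ref{counter_ex} already signals that the whole statement is sensitive to CH-like combinatorics, so a full ZFC resolution may not be in reach and an independence result could be the right final form.
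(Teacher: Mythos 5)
The statement you are trying to prove is Conjecture \ref{conj_summands}: the paper offers no proof of it, explicitly labels it a conjecture motivated by the obstruction in Fact \ref{counter_ex}, and defers its treatment to a work in preparation. So there is no argument in the paper to compare yours against, and the only question is whether your sketch closes the problem. It does not, and you candidly say so yourself: the passage from ``a definable subgroup $K_s$ has cardinality $2^{\aleph_0}$'' to ``$G$ has a direct summand isomorphic to $G^*_{s,2^{\aleph_0}}$'' is exactly the open content of part (1), and no amount of appeal to injectivity of divisible groups or to Pr\"ufer's theorem on bounded pure subgroups fills it without first producing the right pure subgroup, which is the whole difficulty.

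Two more concrete problems with the plan. First, the raw cardinality of $K_s$ is the wrong invariant even at the level of step one: for $s=(p,k)$ the group $\bigoplus_{\alpha<2^{\aleph_0}}\mathbb{Z}_{p^{k+1}}$ has $p^k$-torsion of cardinality $2^{\aleph_0}$ yet has no direct summand isomorphic to $\mathbb{Z}_{p^k}$ at all, so $|Tor_{p^k}(G)|=2^{\aleph_0}$ cannot force a $G^*_{(p,k),2^{\aleph_0}}$ summand. What controls the number of $\mathbb{Z}_{p^k}$-summands is an Ulm-type quotient such as $Tor_{p^k}(G)/(Tor_{p^{k-1}}(G)+pG)$ (this is precisely the quotient used in the proof of Lemma \ref{card_invariants}), and such a quotient of a Polish group by an analytic subgroup is not itself a Polish space, so the perfect set property does not apply to it directly; the paper's technique in Lemma \ref{card_invariants} instead builds $2^{\aleph_0}$ pairwise inequivalent limits of Cauchy sequences via \cite[Lemma 13]{sh_for_CH}, and even that argument needs the standing hypothesis that the complement $G_1$ is countable, which part (1) of the conjecture drops. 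Second, your iteration for parts (2) and (3) asks the ``residual complement'' in $G_1$ to inherit a Polish topology via a quotient or continuous-section argument; Fact \ref{counter_ex}(2) is a direct counterexample to that mechanism, since $H_1\oplus H_2$ admits a Polish group topology while neither $H_1$ nor $H_2$ does. The conjecture is formulated precisely to route around this by allowing $G'_1,G'_2$ to differ from any naively extracted complement, so any proof of (2) and (3) must do genuinely more than peel and restrict the topology.
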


	The paper is organized as follows. In Section \ref{preliminaries} we prove some preliminaries results to be used in later sections. In Section \ref{first_venue} we prove Theorem \ref{th_first_venue}. In Section \ref{second_venue} we prove Theorems \ref{th_second_venue} and \ref{new_th_second_venue}. In Section \ref{third_venue} we prove Corollaries \ref{cor_third_venue}, \ref{cor_bounded_divis} and \ref{corollary_free_prod}.
	
		In a work in preparation we deal with Conjecture \ref{conj_summands}, and mimic Theorems \ref{th_first_venue} and \ref{th_second_venue} in a weaker context, i.e. the topology on $G$ need not be Polish.

\section{Preliminaries}\label{preliminaries}

	In notation and basic results we follow \cite{bark}. Given $A \subseteq \Gamma$ we denote the induced subgraph of $\Gamma$ on vertex set $A$ as $\Gamma \restriction A$.
	
	\begin{fact}\label{retract_fact} Let $G = G(\Gamma, G_a)$, $A \subseteq \Gamma$ and $G_A = (\Gamma \restriction A, G_a)$. Then there exists a unique homomorphism $\mathbf{p} = \mathbf{p}_A: G \rightarrow G_A$ such that $\mathbf{p}(g) = g$ if $g \in G_A$, and $\mathbf{p}(g) = e$ if $g \in G_{\Gamma - A}$.
\end{fact}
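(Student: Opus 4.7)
The plan is to invoke the universal property of the presentation given in Definition \ref{def_cyclic_prod}. I would first specify the candidate homomorphism on generators of $G$: send each $g \in G_a$ with $a \in A$ to itself (viewed as an element of $G_A$), and send each $g \in G_a$ with $a \in \Gamma - A$ to $e$. Since the generators of $G$ are exactly the elements of $\bigcup_{a \in V} G_a$, this defines a set map from the generating set of $G$ to $G_A$; by the presentation of $G$, it extends to a (necessarily unique) homomorphism provided it respects every defining relation.

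The bulk of the verification is then just case-checking the two families of relations. For the internal relations of a factor $G_a$: if $a \in A$ the map restricts to the identity of $G_a \hookrightarrow G_A$, so those relations go to the corresponding true equations in $G_A$; if $a \in \Gamma - A$ then every generator from $G_a$ is sent to $e$, so any relation from $G_a$ becomes a true equation among copies of $e$. For the commutation relations $gg' = g'g$ with $g \in G_a$, $g' \in G_b$ and $\{a,b\} \in E$: if both $a, b \in A$ then $\{a,b\}$ is still an edge of $\Gamma \restriction A$, so $gg' = g'g$ holds in $G_A$; in the remaining cases at least one of $g, g'$ is sent to $e$, and the relation becomes trivial.

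Uniqueness of $\mathbf{p}_A$ is immediate from the fact that $G$ is generated by $\bigcup_{a \in V} G_a$ and the value of $\mathbf{p}_A$ on each such generator is pinned down by the two stipulations $\mathbf{p}(g)=g$ for $g \in G_A$ and $\mathbf{p}(g)=e$ for $g \in G_{\Gamma - A}$ (note that for $a \notin A$, every element of $G_a$ is a generator of $G$, so these two stipulations already determine the map on \emph{all} generators). No step is really an obstacle here; the only mild subtlety is being careful to state that one is defining the map on the generating \emph{set} $\bigcup_{a \in V} G_a$ rather than on some minimal generating subset, so that the commutation relations can be read off verbatim from the presentation.
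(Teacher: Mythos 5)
Your proposal is correct and is essentially the paper's own argument: the paper likewise splits the defining relations into those of $\Gamma\restriction A$, those of $\Gamma\restriction(\Gamma-A)$, and the mixed commutation relations, observes that the first family is preserved verbatim while the other two become trivial, and concludes existence and uniqueness from the presentation. No substantive difference.
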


	\begin{proof} For arbitrary $G = G(\Gamma, G_a)$, let $\Omega_{(\Gamma, G_a)}$ be the set of equations from Definition \ref{def_cyclic_prod} defining $G(\Gamma, G_a)$. Then for the $\Omega_{(\Gamma, G_a)}$ of the statement of the fact we have $\Omega_{(\Gamma, G_a)} = \Omega_1 \cup \Omega_2 \cup \Omega_3$, where:
	\begin{enumerate}[(a)]
	\item $\Omega_1 = \Omega_{(\Gamma \restriction A, G_a)}$;
	\item $\Omega_2 = \Omega_{(\Gamma \restriction {\Gamma - A}, G_a)}$;
	\item $\Omega_3 = \{ bc = cb : b E_{\Gamma} c \text{ and } \{ b, c \} \not\subseteq A \}$.
\end{enumerate}
Notice now that $\mathbf{p}$ maps each equation in $\Omega_1$ to itself and each equation in $\Omega_2 \cup \Omega_3$ to a trivial equation, and so $p$ is an homomorphism (clearly unique).
\end{proof}

	\begin{definition} A word in $G(\Gamma, G_a)$ is either $e$ (the empty word) or a formal product $g_1 \cdots g_n$ with each $g_i \in G_{a_{i}}$ for some $a_{i} \in \Gamma$. The elements $g_i$ are called the syllables of the word. The length of the word $g_1 \cdots g_n$ is $|g_1 \cdots g_n| = n$, with the length of the empty word defined to be $0$. If $g \in G(\Gamma, G_a)$ satisfies $G(\Gamma, G_a) \models g = g_1 \cdots g_n$, then we say that the word $g_1 \cdots g_n$ represents (or spells) $g$. We will abuse notation and do not distinguish between a word and the element of $G$ that it represents.
\end{definition}

\begin{definition}\label{moves} The word $g_1 \cdots g_n$ is a normal form if it cannot be changed into a shorter word by applying a sequence of moves of the following type:
\begin{enumerate}[$(M_1)$]
\item delete the syllable $g_i = e$.
\item if $g_i, g_{i+1} \in G_a$, replace the two syllables $g_i$ and $g_{i+1}$ by the single syllable $g_i g_{i+1} \in G_a$.
\item if $g_i \in G_a$, $g_{i+1} \in G_b$ and $aE_{\Gamma}b$, exchange $g_i$ and $g_{i+1}$.
\end{enumerate}
\end{definition}
	
	\begin{fact}[Green \cite{green} for (1) and Hermiller and Meier \cite{hermiller} for (2)]
\begin{enumerate}[(1)]
	\item If a word in $G(\Gamma, G_a)$ is a normal form and it represents the identity element, then it is the empty word. 
	\item If $w_1$ and $w_2$ are two words representing the same element $g \in G(\Gamma, G_a)$, then $w_1$ and $w_2$ can be reduced to identical normal forms using moves $(M_1)-(M_3)$.
\end{enumerate}
\end{fact}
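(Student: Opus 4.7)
The plan is to prove both assertions simultaneously by induction on $n = |V(\Gamma)|$. The cases $n \leq 1$ are immediate: for $n = 0$ the only word is empty, and for $n = 1$ both statements reduce to the well-definedness of multiplication in a single group. For the inductive step, fix any $v \in \Gamma$, set $A = \Gamma \setminus \{v\}$, and let $L \subseteq A$ be the set of $\Gamma$-neighbors of $v$. Comparing generators and relations directly yields the isomorphism
$$G(\Gamma, G_a) \;\cong\; G(\Gamma \restriction A, G_a) *_{G(\Gamma \restriction L, G_a)} \bigl(G(\Gamma \restriction L, G_a) \times G_v\bigr),$$
an amalgamated free product; the retractions supplied by Fact \ref{retract_fact} witness that the amalgamating inclusions are genuine embeddings, so both factor groups satisfy parts (1) and (2) by the inductive hypothesis.

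For part (1), I would invoke the standard Bass--Serre/Britton normal form theorem for amalgamated free products. Given a word $w$ in $G(\Gamma, G_a)$ that is in normal form in the sense of Definition \ref{moves} and represents $e$, I would use $(M_3)$-commutations among the $L$-syllables to parse $w$ as an alternating sequence of factors from $G(\Gamma \restriction A, G_a)$ and $G(\Gamma \restriction L, G_a) \times G_v$. The hypothesis that no move $(M_1)$--$(M_3)$ applies to $w$ is precisely what forces no factor of the alternating sequence to lie in the amalgamated subgroup $G(\Gamma \restriction L, G_a)$, so the sequence is reduced. The amalgamated product normal form theorem then forces it to be trivial, after which the inductive hypothesis applied inside each factor forces $w$ itself to be empty.

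For part (2), once (1) is in hand, I would argue via a Newman diamond-lemma analysis of the rewriting system generated by $(M_1)$--$(M_3)$. Naive termination fails because $(M_3)$ preserves length, so I would fix a linear order on $\Gamma$ and enrich $(M_3)$ with a canonical reordering rule that sorts adjacent pairwise-commuting syllables according to this order. The enriched system is terminating, local confluence reduces to a finite case-check of overlapping moves, and (1) implies that the resulting canonical form depends only on the group element, which is exactly the content of (2).

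The main obstacle is the non-length-decreasing behaviour of $(M_3)$: a commutation can postpone a cancellation arbitrarily far, so simply iterating moves until none applies does not by itself produce a canonical representative, and one cannot directly appeal to the diamond lemma in its simplest form. Decomposing at a single vertex $v$ is what makes the analysis tractable, because it concentrates the commutation combinatorics in the neighborhood $L$ of $v$, where the inductive hypothesis is already available; most of the real work of the proof goes into verifying that the parsing of $w$ into an alternating sequence respects the normal-form hypothesis, i.e., that nontrivial collapses inside a factor would lift to an application of $(M_1)$ or $(M_2)$ in the original word $w$.
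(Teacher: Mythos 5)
The paper offers no proof of this Fact: both parts are quoted from the literature (Green's thesis for (1), Hermiller--Meier for (2)), so there is no internal argument to compare yours against. Your route for part (1) --- induction on $|V(\Gamma)|$ via the amalgamated product decomposition over the link of a vertex, plus the normal form theorem for amalgamated free products --- is a standard and workable one, with two caveats: ``normal form'' in Definition~\ref{moves} means ``cannot be shortened by any \emph{sequence} of moves,'' not ``no move applies'' (moves of type $(M_3)$ essentially always apply), and your claim that no factor of the alternating sequence lies in the amalgamated subgroup quietly invokes part (2) for $\Gamma \restriction A$, to conclude that a reduced subword representing an element of $G(\Gamma \restriction L, G_a)$ must already be supported on $L$. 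That circularity is harmless if you set up the simultaneous induction carefully, but it should be made explicit.

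The genuine gap is in part (2). The rewriting system you propose --- $(M_1)$, $(M_2)$, together with the directed version of $(M_3)$ that sorts adjacent commuting syllables by a fixed linear order on $\Gamma$ --- is terminating but is \emph{not} locally confluent, so Newman's lemma does not apply. Concretely, let $\Gamma$ be the path on $\{a,b,c\}$ with $a E_\Gamma b$ and $b E_\Gamma c$ but $a$ not adjacent to $c$, order the vertices by $c<b<a$, and pick $g_x \in G_x - \{e\}$ for $x = a,b,c$. The word $g_a g_b g_c$ admits exactly two directed swaps, producing $g_b g_a g_c$ and $g_a g_c g_b$; both are irreducible for your system (each remaining adjacent pair is either already sorted or lies in non-adjacent vertex groups), both represent the same group element, and they are distinct words. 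So the ``finite case-check of overlapping moves'' would turn up a non-joinable critical pair rather than confirm confluence. This is a well-known phenomenon already for free partially commutative monoids: adjacent-transposition sorting does not converge to the lexicographically least representative of a trace. Nor does part (1) rescue the argument, since the failure occurs among words of equal length, where (1) gives no information. The actual proofs in the literature take a different shape: Green establishes uniqueness by constructing a faithful action of $G(\Gamma, G_a)$ on the set of reduced words (a van der Waerden-style argument), Hermiller--Meier use a complete rewriting system whose left-hand sides are longer than two syllables precisely so as to absorb such critical pairs, and one can alternatively characterize shuffle-equivalence of reduced words by their images under the projections $\mathbf{p}_{\{x,y\}}$ of Fact~\ref{retract_fact}. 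Some such replacement for the diamond-lemma step is needed before your outline becomes a proof.
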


	\begin{definition} Let $g \in G(\Gamma, G_a)$. We define:
	\begin{enumerate}[(1)]
	\item $sp(g) = \{ a \in \Gamma : g_i \text{ is a syllable of a normal form for } g \text{ and } g_i \in G_a - \{ e \} \}$;
	\item $lg(g) = |w|$, for $w$ a normal form for $g$;
	\item $F(g) = \{ g_1 : g_1 \cdots g_n \text{ is a normal form for } g\}$;
	\item $L(g) = \{ g_n : g_1 \cdots g_n \text{ is a normal form for } g\}$;
	\item $\hat{L}(g) = \{ g_n^{-1}:  g_n \in L(g) \}$.
\end{enumerate}
(Here $F$ and $L$ stand for ``first'' and ``last'', respectively.)
\end{definition}

\begin{definition}\label{def_cyclically}
\begin{enumerate}[(1)]
\item We say that the word $w$ is {\em weakly cyclically reduced} when:
$$F(w) \cap \hat{L}(w) = \emptyset.$$
 \item We say that the word $g_1 \cdots g_n$ is {\em cyclically reduced} if no combination of moves $(M_1)-(M_4)$ results in a shorter word, where $(M_1)-(M_3)$ are as in Definition~\ref{moves} and the move $(M_4)$ is as follows:
\begin{enumerate}[$(M_4)$]
\item replace $g_1 \cdots g_n$ by either $g_2 \cdots g_ng_1$ or $g_ng_1 \cdots g_{n-1}$.
\end{enumerate}
\item We say that $g \in G(\Gamma, G_a)$ is $(a, b)$-cyclically reduced (or $(G_a, G_b)$-cyclically reduced) when $g \neq e$, $F(g) \subseteq G_a - \{ e \}$ and $L(g) \subseteq G_b - \{ e \}$.
\end{enumerate}
\end{definition}

	\begin{observation}\label{obs_normal_forms} Notice that if $g \in G(\Gamma, G_a)$ is spelled by a cyclically reduced (resp. a weakly cyclically reduced) normal form, then any of the normal forms spelling $g$ is cyclically reduced (resp. weakly cyclically reduced).
\end{observation}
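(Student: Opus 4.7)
The plan is to dispose of the two cases separately, using the Green--Hermiller--Meier normal form theorem cited just above.

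For the weakly cyclically reduced case, the argument is essentially tautological. The sets $F(g)$, $L(g)$ and $\hat{L}(g)$ are defined by quantifying over \emph{all} normal forms spelling $g$, so they depend only on the element $g$, not on the particular word used to represent it. Under the convention that identifies a word with the element it spells, the condition $F(w) \cap \hat{L}(w) = \emptyset$ is therefore a property of $g$ alone; if one normal form for $g$ satisfies it, then every normal form for $g$ does.

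For the cyclically reduced case, I would argue by prepending move sequences. Let $w$ be a cyclically reduced normal form for $g$ and let $w'$ be any other normal form for $g$. By part (2) of the Hermiller--Meier fact, there is a finite sequence of moves $(M_1)$--$(M_3)$ transforming $w$ into $w'$. Both words are normal forms for the same $g$, so $lg(w) = lg(w') = lg(g)$. Suppose for contradiction that $w'$ is not cyclically reduced: then there is a sequence of $(M_1)$--$(M_4)$ moves carrying $w'$ to some $w^{*}$ with $lg(w^{*}) < lg(w')$. Concatenating the $(M_1)$--$(M_3)$ transformation from $w$ to $w'$ (which consists entirely of moves legal among $(M_1)$--$(M_4)$) with this shortening sequence produces a sequence of $(M_1)$--$(M_4)$ moves from $w$ to $w^{*}$ with $lg(w^{*}) < lg(w)$, contradicting that $w$ is cyclically reduced.

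There is no real obstacle here: the entire observation is a direct \emph{path-concatenation} argument, and the only thing one must check is that the moves $(M_1)$--$(M_3)$ relating two normal forms are legitimate steps in the larger move-system $(M_1)$--$(M_4)$ (which is immediate by inclusion) and that the common length of the two normal forms is $lg(g)$ (which is part (1) of the Green fact, guaranteeing that $lg(g)$ is well-defined on $g$).
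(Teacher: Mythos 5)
Your proof is correct and matches the paper's (implicit) justification --- the Observation is stated there without proof, and your two-case argument is exactly the intended one: weak cyclic reducedness depends only on $F(g)$ and $\hat{L}(g)$, hence only on the element, and cyclic reducedness follows by concatenating move sequences. One small imprecision: Fact (2) of Green/Hermiller--Meier gives a \emph{common reduct} $v$ of $w$ and $w'$ rather than a direct $(M_1)$--$(M_3)$ path from $w$ to $w'$, so to run your concatenation you should add that, since $w$ and $w'$ are both normal forms of the same minimal length and $(M_1)$, $(M_2)$ are strictly length-decreasing, the reductions to $v$ use only the self-inverse move $(M_3)$; hence the path from $w'$ to $v$ reverses, and the composite $w \to v \to w' \to w^{*}$ is a legitimate $(M_1)$--$(M_4)$ sequence shortening $w$, as required.
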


	\begin{definition}\label{def_cyc_el} Recalling Observation~\ref{obs_normal_forms}, we say that $g \in G(\Gamma, G_a)$ is cyclically reduced (resp. weakly cyclically reduced) if any of the normal forms spelling $g$ is cyclically reduced (resp. weakly cyclically reduced).
\end{definition}

	\begin{remark} 
	\begin{enumerate}[(1)]
	\item Notice that if $g$ is cyclically reduced, then $g$ cannot be written as a normal form $h_1h_2 \cdots h_{n-1}h_n$ with $h_1, h_n \in G_a$ for some $a \in \Gamma$, since otherwise $lg(h_2 \cdots h_{n-1}h_nh_1) < lg(h_1h_2 \cdots h_{n-1}h_n)$.
	\item Notice that if $g$ is weakly cyclically reduced, then $g$ cannot be written as a normal form $h_1h_2 \cdots h_{n-1}h^{-1}_1$, since otherwise $F(w) \cap \hat{L}(w) \neq \emptyset$.
	\item Hence, if $g$ is cyclically reduced and spelled by the normal form $h_1h_2 \cdots h_{n-1}h_n$, then, unless $n = 1$ and $h_1 = h^{-1}_1$, we have that $g$ is weakly cyclically reduced.
\end{enumerate}
\end{remark}

	\begin{proposition}\label{prop_(ab)_reduced} Let $a \neq b \in \Gamma$, $\{a, b\} \not \in E_{\Gamma}$ and $g_1 u g_2 \in G(\Gamma, G_a)$. 
	Assume that $F(g_1), F(u), F(g_2) \subseteq G_a - \{ e \}$, $L(g_1), L(u), L(g_2) \subseteq G_b - \{ e \}$ and $p \geq 2$, then:
	\begin{enumerate}[(a)]
	\item $g_1u^pg_2$ is $(a, b)$-cyclically reduced;
	\item if $g_1, u, g_2$ are written as normal forms, then $g_1\underbrace{u \cdots u}_pg_2$ is a normal form;
	\item $lg(g_1u^pg_2) = lg(g_1) + plg(u) + lg(g_2) > lg(g_1ug_2) > lg(u)$.
	\end{enumerate}
\end{proposition}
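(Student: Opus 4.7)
The plan is to prove (b) first---that, writing each of $g_1, u, g_2$ in normal form, the concatenation $w = g_1 \cdot \underbrace{u \cdots u}_{p} \cdot g_2$ is itself a normal form of $g_1 u^p g_2$---and then to deduce (a) and (c) from (b).

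For (b), I argue by contradiction. If $w$ is not a normal form, then by the Green--Hermiller--Meier fact quoted above, some sequence of $(M_1)$--$(M_3)$ moves strictly shortens it; the first length-decreasing step must be $(M_2)$ (no syllable of $w$ equals $e$), and tracing it back we find syllables $h_i, h_j$ of $w$ with $i < j$, both in some $G_c$, such that every intermediate syllable $h_k$ lies in a group $G_{c_k}$ with $c_k E_\Gamma c$ (so that $(M_3)$ can bring $h_i, h_j$ adjacent before merging). Since each piece $g_1, u, g_2$ is already a normal form, $h_i$ and $h_j$ cannot lie in the same piece; let $P$ and $Q$ be their pieces ($P$ before $Q$), and let $\ell \in G_b$ be the last syllable of $P$ and $f \in G_a$ the first syllable of $Q$. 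The cases $(h_i, h_j) = (\ell, f)$, $h_i = \ell \ne h_j$ (with $f$ intermediate), and $h_j = f \ne h_i$ (with $\ell$ intermediate) each yield directly $a = b$ or $\{a, b\} \in E_\Gamma$, contradicting the hypotheses. The remaining case has both $\ell$ and $f$ strictly between $h_i$ and $h_j$, so $a$ and $b$ are both adjacent to $c$; then inside $P$, by $(M_3)$ I commute $h_i$ past every syllable of $P$ lying after it (all adjacent to $c$) and then past $\ell$ itself (since $b E_\Gamma c$), obtaining a normal form of $P$ ending in $h_i$. Hence $h_i \in L(P) \subseteq G_b - \{e\}$, so $c = b$, forcing $b E_\Gamma b$, which is absurd.

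Given (b), part (a) follows quickly. By the same fact, every normal form of $g = g_1 u^p g_2$ arises from $w$ by $(M_3)$-moves only, so a syllable of $w$ lies in $F(g)$ precisely when it commutes with every earlier syllable of $w$. If such a candidate $h$ were in some copy of $u$ or in $g_2$, then $h$ would commute with the last syllable of $g_1$ (in $G_b$), and commuting also with its predecessors inside its own piece would belong to $F(u) \cup F(g_2) \subseteq G_a - \{e\}$, yielding the contradiction $a E_\Gamma b$. So $h$ lies in $g_1$ and belongs to $F(g_1) \subseteq G_a - \{e\}$; symmetrically $L(g) \subseteq G_b - \{e\}$. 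Part (c) is immediate from (b) (and from the identical argument in the case $p = 1$, giving $lg(g_1 u g_2) = lg(g_1) + lg(u) + lg(g_2)$): $p \geq 2$ together with $lg(u) \geq 1$ yields the chain of strict inequalities (using also that $g_1$ or $g_2$ is non-trivial, as implicit in the setup).

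The main obstacle is the last subcase of (b): even when $a$ and $b$ share a common neighbor $c$, the hypothesis $L(P) \subseteq G_b - \{e\}$, combined with $\{a,b\} \notin E_\Gamma$ and $a \neq b$, rules out cross-piece cancellation by producing the absurd self-loop $b E_\Gamma b$.
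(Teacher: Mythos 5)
Your proof is correct; the paper itself gives no argument here (its proof reads simply ``Clear''), and your case analysis via the standard shuffle criterion for normal forms in graph products is exactly the routine verification the authors leave implicit. In particular, your key point --- that any cross-piece merge of two $G_c$-syllables forces $a=b$, or $a E_{\Gamma} b$, or the absurd self-loop $b E_{\Gamma} b$ --- is the right way to make ``clear'' precise, and parts (a) and (c) do follow from (b) as you describe (granting, as the paper's applications do, that $g_1, u, g_2$ are all non-trivial).
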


	\begin{proof} Clear.
\end{proof}

\begin{convention}\label{convention_words} Given a sequence of words $w_1, ..., w_k$ with some of them possibly empty, we say that the word $w_1 \cdots w_k$ is a normal form (resp. a (weakly) cyclically reduced normal form) if after deleting the empty words the resulting word is a normal form (resp. a (weakly) cyclically reduced normal form).
\end{convention}
	
	\begin{fact}[\cite{bark}{[Corollary 24]}]\label{bark_fact} Any element $g \in G(\Gamma, G_a)$ can be written in the form $w_1 w_2 w_3 w'_2 w^{-1}_1$, where:
	\begin{enumerate}[(1)]
	\item $w_1 w_2 w_3 w'_2 w^{-1}_1$ is a normal form;
	\item the element $w_3 w'_2 w_2$ is cyclically \mbox{reduced (cf. Observation~\ref{obs_normal_forms} and Def.~\ref{def_cyc_el});}
	\item $sp(w_2) = sp(w'_2)$;
	\item if $w_2 \neq e$, then $\Gamma \restriction {sp(w_2)}$ is a complete graph;
	\item $F(w_2) \cap \hat{L}(w'_2) = \emptyset$.
	\end{enumerate}
(Notice that by (5) if $w_2 \neq e$ then $w_2 w_3 w'_2$ is weakly cyclically reduced).
\end{fact}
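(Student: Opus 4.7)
The plan is to construct the factorization in two stages: first extract a conjugation envelope $w_1^{\pm 1}$ that reduces $g$ to a cyclically reduced core, and then split that core as $w_2 w_3 w_2'$ by peeling off a clique-supported commuting skin.

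Stage 1 (building $w_1$). Fix any normal form for $g$. If $F(g) \cap \hat{L}(g) \neq \emptyset$, pick $h$ in the intersection; by the definitions of $F$ and $\hat{L}$, moves $(M_3)$ let us rewrite the normal form so that $h$ is the leftmost syllable and $h^{-1}$ is the rightmost, giving $g = h g_0 h^{-1}$ as a normal form with $\mathrm{lg}(g_0) < \mathrm{lg}(g)$. Recurse on $g_0$ and prepend $h$ to the resulting $w_1$. Once the intersection is empty, $g$ is weakly cyclically reduced; if it is also cyclically reduced we pass to Stage 2. Otherwise some combination of $(M_1)$--$(M_4)$ still shortens $g$, exhibiting a further conjugating subword that we absorb into $w_1$ before recursing. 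Each round strictly decreases $\mathrm{lg}(g)$, so the procedure terminates with a cyclically reduced core $g^*$ and $g = w_1 g^* w_1^{-1}$.

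Stage 2 (building $w_2, w_3, w_2'$). Working inside $g^*$, I take $C \subseteq \Gamma$ to be a maximal clique such that $g^*$ admits a normal-form splitting $g^* = w_2 w_3 w_2'$ with $sp(w_2) = sp(w_2') = C$ and $F(w_2) \cap \hat{L}(w_2') = \emptyset$. Existence and maximality come from a heap/trace argument: among such $C$ the family is closed under unions that remain cliques, since $g^*$ being cyclically reduced forces the relevant syllables to commute through the interior. Clauses (3), (4), (5) are built into this definition, and clause (2) follows because $w_3 w_2' w_2$ is a cyclic conjugate of $g^*$ and cyclic reducedness is preserved by cyclic conjugation.

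The main obstacle is clause (1), that $w_1 w_2 w_3 w_2' w_1^{-1}$ is a single normal form. One must verify that no commutation transports a syllable across a boundary to enable a cancellation; I would argue this by contradiction using the two maximality choices, since any such cancellation would either let us enlarge $w_1$ (contradicting Stage 1's termination) or enlarge $C$ (contradicting Stage 2's maximality). With clause (1) established, the remaining clauses follow directly from the construction.
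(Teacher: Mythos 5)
The paper does not prove this statement; it is imported verbatim from Barkauskas \cite{bark} (Corollary 24), so your proposal can only be judged on its own terms --- and it has a genuine gap in Stage 1 that already surfaces in a free product of two groups. Your Stage 1 conjugates all the way down to a \emph{cyclically} reduced core, absorbing into $w_1$ every subword witnessing a length reduction, including reductions obtained by an $(M_4)$ move followed by an $(M_2)$ \emph{merge} of two nontrivial syllables from the same vertex group. Such a reduction is a conjugation at the level of group elements, but it is not realizable inside a normal form. Concretely, let $\Gamma$ be the edgeless graph on $\{a,b\}$, $a_1,a_2\in G_a-\{e\}$ with $a_2\neq a_1^{-1}$, $b_1\in G_b-\{e\}$, and $g=a_1b_1a_2$. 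Then $g$ is weakly cyclically reduced ($F(g)=\{a_1\}$, $\hat L(g)=\{a_2^{-1}\}$ are disjoint) but not cyclically reduced, since $(M_4)$ followed by $(M_2)$ yields $b_1(a_2a_1)$ of length $2$. Your procedure sets $w_1=a_1$ and $g^*=b_1(a_2a_1)$; Stage 2 then forces $w_2=w_2'=e$ (no splitting of $g^*$ with equal nonempty supports at both ends exists, as $F(g^*)\subseteq G_b$ and $L(g^*)\subseteq G_a$). The assembled word $a_1\,b_1\,(a_2a_1)\,a_1^{-1}$ has two adjacent $G_a$-syllables and reduces to the length-$3$ word $a_1b_1a_2$, so clause (1) fails; and neither of your two ``maximality'' escape routes applies, because the offending junction is between $L(g^*)$ and $w_1^{-1}$ and cannot be removed by enlarging either $w_1$ or $C$. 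The correct decomposition here is $w_1=e$, $w_2=a_1$, $w_3=b_1$, $w_2'=a_2$.

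The example also shows why the statement is phrased the way it is: $w_2w_3w_2'=a_1b_1a_2$ is only \emph{weakly} cyclically reduced, and it is the cyclic permutation $w_3w_2'w_2=b_1(a_2a_1)$ that is cyclically reduced --- clause (2) deliberately concerns $w_3w_2'w_2$, not $w_2w_3w_2'$. So the whole point of the $w_2,w_2'$ bookkeeping is to record exactly that part of the cyclic reduction which \emph{cannot} be pushed into the conjugating word $w_1$ without destroying normality. The repair is to reverse your order of operations: only genuinely cancelling boundary pairs ($h\in F$ and $h^{-1}\in L$) may be absorbed into $w_1$; once $F\cap\hat L=\emptyset$, the remaining obstructions to cyclic reducedness are same-vertex-group syllables that can be commuted to the two ends, and these must be left in place as $w_2$ and $w_2'$ (their supports then coincide and span a clique, since each such syllable must commute past everything separating it from its end, and any two of them must commute with each other). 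Your Stage 2 cannot recover this structure because it searches for $w_2,w_2'$ inside a core from which they have already been erased. A smaller issue: the claim that cyclic reducedness is preserved under cyclic permutation, used for clause (2), is also asserted without proof, though it is true and routine compared to the main gap.
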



	\begin{definition} Let $g \in G(\Gamma, G_a)$ and $g = w_1 w_2 w_3 w'_2 w^{-1}_1$ as in Fact \ref{bark_fact}. We let:
\begin{enumerate}[(1)]
\item $csp(g) = sp(w_2 w_3 w'_2)$;
\item $clg(g) = lg(w_2 w_3 w'_2)$.
\end{enumerate}
	(Inspection of the proof of Fact \ref{bark_fact} from \cite{bark} shows that this is well-defined).
\end{definition}


	\begin{proposition}\label{prop_1} Let $G = G(\Gamma, G_a)$, with $\Gamma = \{ a_1, a_2, b_1, b_2 \}$ and $|\{ a_1, a_2, b_1, b_2 \}| = 4$. Suppose also that, for $i = 1, 2$, we have that $a_i$ and $b_i$ are not adjacent. Then:
	\begin{enumerate}[(1)]
	\item If $g \in G$ has finite order, then $csp(g)$ is a complete graph (and so $|csp(g)| \leq 2$).
	\item Let $q < p$ be primes, $g_i \in G_{a_i} - \{ e \}$ and $h_i \in G_{b_i} - \{ e \}$ ($i =1, 2$), and $g = (g_1g_2h_1h_2)^p$. Then for every $d \in G$ such that $csp(g)$ is a complete graph (and so $|csp(g)| \leq 2$) we have that $dg \in G$ does not have a $q$-th root.
	\end{enumerate}
\end{proposition}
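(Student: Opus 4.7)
For (1), use Fact~\ref{bark_fact} to write $g = w_1 w_2 w_3 w'_2 w_1^{-1}$; then $\tilde g := w_3 w'_2 w_2$ is cyclically reduced, conjugate to $g$ (via $w_1 w_2$), and satisfies $sp(\tilde g) = csp(g)$. If $g$ has finite order then so does $\tilde g$, and one invokes the classical theorem (going back to Green) that a cyclically reduced torsion element of a graph product is supported on a clique of $\Gamma$ (a short proof is to split $G$ as an amalgamated product along the subgroup generated by a maximal clique of $sp(\tilde g)$, and observe that an element with a non-edge in its support would act loxodromically on the Bass–Serre tree, contradicting torsion). To finish, observe that $\Gamma$ is triangle-free: any three of the four vertices $a_1, a_2, b_1, b_2$ contain one of the non-edges $\{a_1, b_1\}$ or $\{a_2, b_2\}$. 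Hence $|csp(g)| \le 2$.

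For (2), I read the hypothesis as ``$csp(d)$ is a complete graph'' (the ``$csp(g)$'' in the statement appears to be a typo, since $csp(g)$ in (2) is identically $\{a_1,a_2,b_1,b_2\}$, which is not a clique). First, the word $u := g_1 g_2 h_1 h_2$ is already a normal form (its four syllables lie in pairwise distinct factors); using the non-edges $\{a_1,b_1\}$ and $\{a_2,b_2\}$ to forbid two matched same-factor syllables from being commuted to become adjacent, one checks that $u^p$ is cyclically reduced with $clg(g) = 4p$ and $csp(g) = \{a_1,a_2,b_1,b_2\}$. In particular $csp(g)$ is \emph{not} a clique in $\Gamma$.

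Now suppose for contradiction that $dg = c^q$ with $csp(d)$ a clique. Apply Fact~\ref{bark_fact} to $c$: writing $c = y_1 v y_1^{-1}$ with $\tilde v := y_3 y'_2 y_2$ cyclically reduced, one sees $c^q$ is conjugate to $\tilde v^q$, whose cyclic support equals $sp(\tilde v)$. On the other hand, since $csp(d)$ is a clique of size $\le 2$ while $csp(g)$ is not a clique, multiplication by $d$ can only absorb clique-supported initial/terminal segments of the normal form of $u^p$ (via moves $(M_1)$–$(M_3)$ restricted to the syllables in $G_a$ for $a \in csp(d)$); hence the cyclically reduced form of $dg$ retains the full $p$-periodic pattern of $u^p$ at the vertices outside $csp(d)$. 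Matching this $p$-periodic pattern against the $q$-th-power structure of $\tilde v^q$ — via the graph-product analogue of the classical free-group fact that conjugacy of a primitive $u^p$ to $v^q$ forces divisibility of exponents — yields $q \mid p$; since $q < p$ are both prime, contradiction.

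The main obstacle is the last step: precisely bookkeeping which syllables of $u^p$ can be absorbed or reshuffled by $d$ (only those in $G_a$ with $a \in csp(d)$), and then identifying the surviving $p$-periodic cyclically reduced normal form of $dg$ with a $q$-th power. The triangle-freeness of $\Gamma$ enters essentially here: it both bounds $|csp(d)| \le 2$ and, together with the alternation of all four vertices in $u$, rigidifies the $p$-periodic pattern enough to force $q \mid p$.
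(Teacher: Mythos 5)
Your part (1) is correct and complete: you pass to the cyclically reduced core $\tilde g = w_3 w'_2 w_2$ via Fact~\ref{bark_fact}, invoke the standard fact (Green) that a cyclically reduced torsion element of a graph product is supported on a clique, and observe that $\Gamma$ is triangle-free since every triple among $a_1,a_2,b_1,b_2$ contains one of the non-edges $\{a_1,b_1\}$, $\{a_2,b_2\}$. Your reading of (2) --- that ``$csp(g)$'' should be ``$csp(d)$'' --- is also the right one: as you note, $csp((g_1g_2h_1h_2)^p)$ is all of $\Gamma$, and in the application (Lemma~\ref{crucial_lemma}, Case 1) the element whose cyclic support is known to be a clique is $d=d_n$. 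Be aware that the paper itself supplies no proof here; it merely says the result follows by ``analyzing the possible cancellations occurring in the word $dg$, in the style of the proof of Proposition~\ref{pre_prop_2}'' and omits the details, so for part (1) you have actually written out more than the paper does.

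Part (2), however, is a plan rather than a proof, and the decisive step is missing --- as you yourself flag. Two concrete problems. First, the assertion that multiplication by $d$ ``can only absorb clique-supported initial/terminal segments of $u^p$'' is unjustified: only the cyclic core of $d$ is clique-supported, i.e.\ $d=w_1 z w_1^{-1}$ with $z$ supported on a clique but $w_1$ arbitrary, so $w_1^{-1}$ can cancel against $u^p$ in an a priori uncontrolled way. One must first conjugate (for instance, $dg$ is conjugate to $z\,(w_1^{-1}uw_1)^p$) or run a genuine case analysis as in Proposition~\ref{pre_prop_2}; and even after that, one full period of $u^p$ can be damaged by $z$, not just end segments. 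Second, the ``graph-product analogue of the free-group fact that $u^p$ conjugate to $v^q$ forces $q\mid p$'' is being applied to a word that is not an exact $p$-th power but a clique-perturbed one; no off-the-shelf statement of that kind exists, and making it precise is the entire content of the proposition. Note also that crude length bookkeeping cannot close the gap: $clg(u^p)=4p$ is divisible by $q$ whenever $q=2$, so one really does need to match the periodic syllable structure (or, say, combine Barkauskas's description of centralizers, which handles the case $d=e$, with a bounded-perturbation argument for general $d$). Until that matching argument is written out, part (2) is not proved.
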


	\begin{proof} This can be proved using the canonical representation of $d \in G$ that we get from Fact~\ref{bark_fact}, and analyzing the possible cancellations occurring in the word $dg$, in the style e.g. of the proof of Proposition~\ref{pre_prop_2}. The details are omitted.
\end{proof}


	\begin{notation} We denote the free product of two group $H_1$ and $H_2$ as $H_1 \ast H_2$. Notice that $H_1 \ast H_2$ is $G(\Gamma, G_a)$ for $\Gamma$ a discrete graph (i.e. no edges) on two vertices $a$ and $b$, and $G_a = H_1$ and $G_b = H_2$. Thus, when we use $lg(g)$, $sp(g)$, etc., for $g \in H_1 \ast H_2$, we mean with respect to the corresponding $G(\Gamma, G_a)$.
\end{notation}

\begin{proposition}\label{pre_prop_2} Let $k_* \geq 2$ be even and $p >> k_*$ (e.g., as an overkill, we might let $p = 36k_* + 100$). Then (A) implies (B), where:
\begin{enumerate}[(A)]
	\item 
	\begin{enumerate}[(a)]
	\item $H = H_1 \ast H_2$;
	\item $g_* \in H_1 - \{ e \}$;
	\item $h_{(\ell, i)} \in H_2 - \{ e \}$, for $\ell < k_*$ and $i = 1, 2$;
	\item $(h_{(\ell, i)} : \ell < k_* \text{ and } i = 1, 2)$ is with no repetitions;
	\item $(h_{(0, 2)})^{-1} \neq h_{(k_*-1, 1)} \neq (h_{(1, 2)})^{-1}$;
	\item for $i \in \{ 1, 2 \}$ and $0 < \ell < k_*$ we have $h_{(0, 2)} \neq (h_{(\ell, 2)})^{-1}$;
	\item for $i \in \{ 1, 2 \}$ and $0 \leq \ell < k_* - 1$ we have $h_{(k_* - 1, 1)} \neq (h_{(\ell, 1)})^{-1}$;
	\item $g_i = h_{(0, i)} g_* h_{(1, i)} g^{-1}_* \cdots h_{(k_* - 2, i)} g_* h_{(k_* - 1, i)} g^{-1}_*$, for $i = 1, 2$;
	\end{enumerate}
\end{enumerate}
\begin{enumerate}[(B)] 
	\item for every $u \in H$, at least one of the following holds:
	\begin{enumerate}[(a)]
	\item $lg(g_1 u^p g_2) > lg(u)$;
	\item $clg(u) \leq 1$, $lg(g_1 u^p g_2) \geq 2k_*$ and $g_1u^pg_2$ is $(H_2, H_1)$-cyclically reduced;
	\item $clg(u) \leq 1$, $lg(g_1 u^p g_2) \geq 2k_*$ and $lg(g_1 u^p g_2) = lg(u)$.
	\end{enumerate}
\end{enumerate}
\end{proposition}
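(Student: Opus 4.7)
Plan. The argument is a case analysis on $clg(u)$, using the normal-form calculus of Fact~\ref{bark_fact} specialized to the free product $H = H_1 \ast H_2$. Every $u \in H$ has a reduced expression $u = v\bar c v^{-1}$ with $\bar c$ cyclically reduced and $v^{-1}$ not cancelling with $\bar c$, so $lg(u) = 2\,lg(v) + lg(\bar c)$, $clg(u) = lg(\bar c)$, and $u^p = v\bar c^p v^{-1}$ is itself a normal form.

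\emph{Case $clg(u) \geq 2$.} Then $\bar c^p$ is reduced of length $p\cdot lg(\bar c)$, so the coarse bound $lg(g_1 w g_2) \geq lg(w) - lg(g_1) - lg(g_2)$ (cancellation on each side is at most the length of the outer factor) yields
\[
lg(g_1 u^p g_2) \geq 2\,lg(v) + p\,lg(\bar c) - 4k_*.
\]
Since $lg(\bar c) \geq 2$ and $p \geq 36k_* + 100$, we have $(p-1)\,lg(\bar c) > 4k_*$, so the right-hand side strictly exceeds $2\,lg(v) + lg(\bar c) = lg(u)$, establishing (a).

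\emph{Case $clg(u) \leq 1$.} Write $u = v c v^{-1}$ with $c \in H_1 \cup H_2 \cup \{e\}$ and $v^{-1}$ not cancelling with $c$; let $c' = c^p$, which is either $e$ or a single non-trivial syllable in the same factor as $c$. I would analyse the reduction of $g_1 \cdot v c' v^{-1} \cdot g_2$ by tracking the cancellation lengths $\alpha$ between $g_1$ and $v$ on the left, $\beta$ between $v^{-1}$ and $g_2$ on the right, and any seam combinations around $c'$ in the middle. A direct syllable count then gives
\[
lg(g_1 u^p g_2) = 4k_* + 2\,lg(v) + lg(c') - 2(\alpha + \beta) + \varepsilon
\]
for a bounded seam correction $\varepsilon \in \{0,-1,-2\}$. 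When $c' = e$ we have $u^p = e$ and $g_1 u^p g_2 = g_1 g_2$; the seam syllables $g_*^{-1} \in H_1$ of $g_1$ and $h_{(0,2)} \in H_2$ of $g_2$ lie in different factors, so no cancellation occurs and the reduced word has length $4k_*$ and is manifestly $(H_2, H_1)$-cyclically reduced, giving (b). When $c' \neq e$ the trichotomy on $\alpha + \beta$ determines the outcome: $\alpha + \beta < 2k_*$ forces $lg(g_1 u^p g_2) > lg(u)$ and hence (a); $\alpha + \beta = 2k_*$ forces $lg(g_1 u^p g_2) = lg(u) \geq 2k_*$ (since $lg(v) \geq k_*$ is then forced) and hence (c); $\alpha + \beta > 2k_*$ yields a collapse that must be shown to preserve $(H_2, H_1)$-cyclic reducedness and the bound $lg \geq 2k_*$, giving (b).

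\emph{Main obstacle.} The technical heart is the sub-case $c' \neq e$ with $\alpha + \beta > 2k_*$ and $lg(v) < 2k_*$, where cancellation is deep enough to reach $c'$ from one or both sides and the seam combinations could, a priori, cascade. Here the large cancellation forces $v$ to start with a long prefix of $g_1^{-1}$ and (in reverse) to end with a long prefix of $g_2^{-1}$; the no-inverse conditions (A)(d)--(g) are exactly what is needed to block the short list of cancellation chains that would otherwise collapse the word below $2k_*$ or spoil its $(H_2, H_1)$-cyclic reducedness. Splitting on whether $c' \in H_1$ or $c' \in H_2$ and on the parity of the cancellation depth inside the alternating patterns of $g_1$ and $g_2$ disposes of the remaining cases. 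This bookkeeping is technically tedious but conceptually a direct manipulation of free-product normal forms, and it is precisely the style of argument Proposition~\ref{prop_1} defers to.
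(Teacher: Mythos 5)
Your skeleton agrees with the paper's: split on the length of the cyclic core of $u$, win by a coarse length count when the core has length $\geq 2$, compute $g_1g_2$ explicitly when $u^p=e$, and analyse cancellation when the core is a single syllable $c$ with $c^p\neq e$. The first two cases are fine as sketched (the identity $lg(u)=2\,lg(v)+lg(\bar c)$ is not quite right --- an end syllable of $\bar c$ can amalgamate with a seam syllable of $v^{\pm 1}$, e.g.\ $u=aba'$ with $a,a'\in H_1$, $a'\neq a^{-1}$, has $lg(u)=3$ but $v=a$, $\bar c=b(a'a)$ of length $2$; this is exactly the odd-core case the paper's decomposition $w_1w_2w_3w_2'w_1^{-1}$ is built to absorb, but the slack from $p\gg k_*$ makes the off-by-one harmless there). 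The problem is that the entire content of the proposition sits in the third case, which you leave as a plan, and the plan points at the wrong target.

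Two concrete defects. First, you miss the observation that makes the bookkeeping tractable: $L(g_1)=g_*^{-1}\in H_1$ and $F(g_2)=h_{(0,2)}\in H_2$, while the first syllable of $v$ and the inverse of the last syllable of $v^{-1}$ are the \emph{same} element, hence lie in a single factor; so cancellation can occur at \emph{at most one} of the two junctions, one of your $\alpha,\beta$ is always $0$, and there is no two-sided cascade to control. Second, your resolution of the deep-cancellation sub-case (``$\alpha+\beta>2k_*$ \dots giving (b)'') asserts the wrong conclusion. When $c^p\neq e$ and cancellation at, say, the left junction is deep, $v$ is forced to begin with $g_*\in H_1$ and to continue as the inverse of a suffix of $g_1$, so whatever survives begins with an $H_1$-syllable and $g_1u^pg_2$ cannot be $(H_2,H_1)$-cyclically reduced: clause (B)(b) is unavailable. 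What actually has to be proved is that the middle block $vc^pv^{-1}=x_1\cdots x_m\,c^p\,x_m^{-1}\cdots x_1^{-1}$ can never be \emph{entirely} consumed by $g_2$ (resp.\ $g_1$): total cancellation would force the first and the $(2m+1)$-st syllables of $g_2$ to be mutually inverse, i.e.\ $h_{(m,2)}=(h_{(0,2)})^{-1}$, contradicting (A)(f) (or (A)(e) when $m=0$; symmetrically (A)(g) on the other side). Hence $g_1$ survives as an initial segment, $lg(g_1u^pg_2)\geq 2k_*>2m+1=lg(u)$, and clause (B)(a) holds. This impossibility argument is the heart of the proof; without it your trichotomy boundaries (e.g.\ that $\alpha+\beta=2k_*$ forces $\varepsilon=0$ and hence clause (B)(c) rather than a length drop below $lg(u)$) are unjustified, so the gap is genuine.
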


	\begin{proof} Let $u \in H$, write $u = w_1 w_2 w_3 w'_2 w^{-1}_1$ as in Fact \ref{bark_fact} and set $w_2 w_3 w'_2 = w_0$. Clearly the element $g_1 u^p g_2$ is spelled by the following word (thinking of $g_i$ as a word (cf. its definition)):
	$$w_* = g_1w_1\underbrace{w_0 \cdots w_0}_{p}w^{-1}_1 g_2.$$
\newline \underline{\em Case 1}. $lg(w_0) \geq 2$ and $lg(w_0)$ is even.
\newline Notice that in this case the word:
$$w_1\underbrace{w_0 \cdots w_0}_{p}w^{-1}_1$$
is a normal form for $u^p$, and so the only places where cancellations (i.e. consecutive applications of moves $(M_1)$ and $(M_2)$ as in Definition \ref{moves})  may occur in $w_*$ are at the junction of $g_1$ and $w_1$ and at the junction of $w^{-1}_1$ and $g_2$. Since by assumption $lg(g_i) = 2k_*$ ($i = 1, 2$) and $p >> 4k_*$, we get that $lg(g_1 u^p g_2) > lg(u)$. Thus, clause $B(a)$ is true.
\newline \underline{\em Case 2}. $lg(w_0) \geq 3$ and $lg(w_0)$ is odd.
\newline In this case, for some $\ell \in \{ 1, 2 \}$, $F(w_0),L(w_0) \in H_{3-\ell} - \{e\}$, $w_2, w'_2 \in H_\ell - \{ e \}$ and $w'_2 w_2 \neq e$, and so, letting $w'_0$ stand for a normal form for $w_3 w'_2 w_2$ (i.e. $w'_0 = w_3 (w'_2 w_2)$), we have that $lg(w'_0) \geq 2$. Thus, the word:
$$w_1w_2\underbrace{w'_0 \cdots w'_0}_{p-1}w_3w'_2w^{-1}_1$$
is a normal form for $u^p$. Hence, arguing as in Case 1, we see that $lg(g_1 u^p g_2) > lg(u)$. Thus, clause $B(a)$ is true.
\newline \underline{\em Case 3}. $lg(w_0) = 1$, $(w_0)^p \neq e$, and $w_1 = e = w_1^{-1}$.
\newline This case is clear by assumption (A)(e). Clearly in this case clause $B(a)$ is true.
\newline \underline{\em Case 4}. $lg(w_0) = 1$, $(w_0)^p \neq e$ and $w_1 \neq e \neq w_1^{-1}$.
\newline If this is the case, then $(w_0)^p = (w_3)^p = g^p$ for some $g \in H_1 \cup H_2$ (and $w_2 = e = w'_2$). Notice crucially that in $w_* = g_1w_1(w_0)^pw_1^{-1}g_2$ if a cancellation occurs at the junction of $g_1$ and $w_1$ (resp. of $w^{-1}_1$ and $g_2$), then it cannot occur at the junction of $w^{-1}_1$ and $g_2$ (resp. of $g_1$ and $w_1$), since for $i =1, 2$ we have $F(g_i) \subseteq H_2$ and $L(g_i) \subseteq H_1$, whereas $e \neq F(w_1) = \hat{L}(w_1^{-1}) \neq e$. 
\newline \noindent \underline{\em Case 4.1}. No cancellation occurs at the junction of $g_1$ and $w_1$.
\newline Let $m_*$ be the number of cancellations occurring at the junction of $w^{-1}_1$ and $g_2$.
\newline \underline{\em Case 4.1.1}. $2lg(w_1) + 1 > 2k_*$.
\newline Clearly $m_* \leq 2k_*$ and so we have:
\[ \begin{array}{rcl}
	   lg(g_1u^pg_2) & \geq & 2k_* + 2lg(w_1) + 1 - m_* \\
					 & \geq & 2lg(w_1) + 1 \\
					 & =    & lg(u),
\end{array}	\]
and so either clause (B)(a) or B(c) is true.
\newline \underline{\em Case 4.1.2}. $2lg(w_1) + 1 \leq 2k_*$.
\newline First of all, necessarily $2lg(w_1) + 1 < 2k_*$. Furthermore, notice crucially that $m_* < 2lg(w_1) + 1$, because otherwise we would have:
$$h_{0, 2} = \hat{L}(w_1^{-1}) \text{ and } h_{lg(w_1), 2} = (F(w_1))^{-1},$$
contradicting assumption (A)(f). 
\noindent Hence, $g_1$ is an initial segment of a normal form spelling $g_1u^pg_2$ and so we have:
\[ \begin{array}{rcl}
	   lg(u) & =& 2lg(w_1) + 1 \\
					& < & 2k_* \\
					& \leq & lg(g_1u^pg_2),
\end{array}	\]
and so clause $B(a)$ is true.
\newline \underline{\em Case 4.2}. No cancellation occurs at the junction of $g_2$ and $w^{-1}_1$. 
\newline Let $m_*$ be the number of cancellations occurring at the junction of $w^{-1}_1$ and $g_2$.
\newline \underline{\em Case 4.2.1}. $2lg(w_1) + 1 > 2k_*$.
\newline As in Case 4.1.1. 
\newline \underline{\em Case 4.2.2}. $2lg(w_1) + 1 \leq 2k_*$.
\newline Similar to Case 4.1.2, using assumption (A)(g).
\newline \underline{\em Case 5}. $lg(w_0) = 0$, or $lg(w_0) = 1$ and $(w_0)^p = e$.
\newline If either of these cases happen, then $w_* = g_1 g_2$ is a normal form of length $4k_*$, and so clearly $lg(g_1 u^p g_2) \geq 2k_*$ and $g_1u^pg_2$ is $(H_2, H_1)$-cyclically reduced. Thus, clause $B(b)$ is true.
\end{proof}

\begin{proposition}\label{prop_2} The set of equations $\Omega$ has no solution in $H$, when:
	\begin{enumerate}[(a)]
	\item $k(n) \geq 2$ is even and $p(n) >> k(n)$, for $n < \omega$;
	\item $n < m < \omega$ implies $k(n) < k(m)$;
	\item $H = H_1 \ast H_2$;
	\item for every $n < \omega$ we have:
	\begin{enumerate}[(d.1)] 
	\item $g_{(n, *)} \in H_1 - \{ e \}$; 
	\item $h_{(n, \ell, i)} \in H_2 - \{ e \}$, for $\ell < k(n)$ and $i = 1, 2$;
	\item $(h_{(n, \ell, i)} : \ell < k(n) \text{ and } i = 1, 2)$ is with no repetitions;
	\item $(h_{(n, 0, 2)})^{-1} \neq h_{(n, k(n)-1, 1)} \neq (h_{(n, 1, 2)})^{-1}$;
	\item for $i \in \{ 1, 2 \}$ and $0 < \ell < k(n)$ we have $h_{(n, 0, 2)} \neq (h_{(n, \ell, 2)})^{-1}$;
	\item for $i \in \{ 1, 2 \}$ and $0 \leq \ell < k(n) - 1$ we have $h_{(n, k(n) - 1, 1)} \neq (h_{(n, \ell, 1)})^{-1}$;
	\item for $i = 1, 2$ we have:
	$$g_{(n, i)} = h_{(n, 0, i)} g_{(n, *)} h_{(n, 1, i)} g^{-1}_{(n, *)} \cdots h_{(n, k(n) - 2, i)} g_{(n, *)} h_{(n, k(n) - 1, i)} g^{-1}_{(n, *)};$$
\end{enumerate}
	\item $\Omega = \{ x_n = g_{(n, 1)} (x_{n+1})^{p(n)} g_{(n, 2)} : n < \omega \}$.
	\end{enumerate}
\end{proposition}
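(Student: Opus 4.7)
Suppose for contradiction $(u_n)_{n<\omega}$ solves $\Omega$ in $H$. For each $n < \omega$ I apply Proposition~\ref{pre_prop_2} with $k_* = k(n)$, $p = p(n)$, $g_i = g_{(n,i)}$, $u = u_{n+1}$: hypothesis~(A) there is given verbatim by clauses (d.1)--(d.7), so at each step $n$ one of alternatives (a), (b), (c) of conclusion~(B) holds. Two structural facts distilled from the proof of Proposition~\ref{pre_prop_2} drive the argument:
\begin{enumerate}[(I)]
\item alternative (b) at $n$ is realized only in Case~5 of that proof, where $u_n = g_{(n,1)}g_{(n,2)}$; since $L(g_{(n,1)}) \subseteq H_1 - \{e\}$ and $F(g_{(n,2)}) \subseteq H_2 - \{e\}$, this product is a cyclically reduced normal form of length $4k(n)$, and moreover $u_{n+1}^{p(n)} = e$;
\item in both alternatives (b) and (c) one has $lg(w_0) \leq 1$, i.e.\ $clg(u_{n+1}) \leq 1$.
\end{enumerate}
Combining (I) and (II) yields the key deduction: \emph{if $clg(u_n) \geq 2$, then (a) must hold at step $n-1$} --- because (c) at $n-1$ would force $clg(u_n) \leq 1$, while (b) at $n-1$ would force $u_n$ to be conjugate to a torsion element of $H_1 \cup H_2$, impossible for a cyclically reduced element of $H_1 \ast H_2$ with $clg \geq 2$.

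\textbf{Main case split.} Let $S_a$ be the set of $n$ at which (a) holds. If $S_a$ is finite, then for $n$ large only (b) or (c) is available; but (b) would require (a) at $n-1 \in S_a$, absurd for large $n$, so only (c), giving $lg(u_n)$ constant for $n \geq N$ yet bounded below by $2k(n) \to \infty$, absurd. So $S_a = \{n_1 < n_2 < \cdots\}$ is infinite. The key deduction, reapplied inside each interval $(n_i, n_{i+1})$, forces (b) to occur there only at $m = n_i + 1$ (a later $m$ would demand (a) at $m-1 \notin S_a$). Put $I := \{i : \text{(b) at } n_i + 1\}$. If $I$ is finite, then for $i$ large every step in $(n_i, n_{i+1})$ is (c), so the (c)-chain gives $lg(u_{n_i + 1}) = \cdots = lg(u_{n_{i+1}})$; together with $lg(u_{n_i}) > lg(u_{n_i + 1})$ from (a) at $n_i$, the sequence $\bigl(lg(u_{n_i})\bigr)_{i\text{ large}}$ is a strictly decreasing sequence of non-negative integers, absurd.

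\textbf{The hard case} is $I$ infinite, and this is the main obstacle. For $i \in I$, substituting $u_{n_i + 1} = g_{(n_i + 1, 1)} g_{(n_i + 1, 2)}$ into the equation at $n_i$ yields
\[
u_{n_i} = g_{(n_i,1)} \bigl( g_{(n_i+1,1)} g_{(n_i+1,2)} \bigr)^{p(n_i)} g_{(n_i,2)},
\]
and a direct junction check (every $g_{(\cdot,\cdot)}$ starts with a syllable in $H_2$ and ends with one in $H_1$, so the three boundaries admit no cancellation) shows this word is a cyclically reduced normal form with $F \subseteq H_2$, $L \subseteq H_1$ and $clg \geq 2$. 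A downward induction on $m = 0, 1, \ldots, n_i$ then propagates the invariant ``$u_{n_i - m}$ is cyclically reduced with $F \subseteq H_2$, $L \subseteq H_1$ and $clg \geq 2$'': given it at stage $m$, the key deduction forces (a) at $n_i - m - 1$, and the same junction analysis applied to
\[
u_{n_i - m - 1} = g_{(n_i-m-1,1)}\, u_{n_i-m}^{p(n_i-m-1)}\, g_{(n_i-m-1,2)}
\]
(using that powers of a cyclically reduced word with $F \subseteq H_2, L \subseteq H_1$ remain of the same form) carries the invariant to stage $m+1$. Hence (a) holds at every step $\leq n_i$; letting $i$ range over the infinite set $I$ (so $n_i \to \infty$) forces (a) at every $n < \omega$, giving the impossible infinite descent $lg(u_0) > lg(u_1) > \cdots$ in $\mathbb{N}$.
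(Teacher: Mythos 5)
Your argument is correct and rests on the same two mechanisms as the paper's own proof: the downward induction via Proposition~\ref{prop_(ab)_reduced} showing that $(H_2,H_1)$-cyclic-reducedness of some $u_n$ with $n$ large forces $lg(u_0)\geq n$ (your ``hard case''; the paper's first claim), and the tension between eventual constancy of the non-increasing length sequence and the bound $lg(u_n)\geq 2k(n)\to\infty$ coming from alternative (c) (your remaining cases). The paper simply front-loads the first mechanism to rule out alternative (b) for all large $n$ at once, which collapses your case analysis on $S_a$ and $I$ and avoids any appeal to the internal case structure of the proof of Proposition~\ref{pre_prop_2} --- your claim (I) is not literally true as stated (alternative (b) can also hold in, e.g., Case~4 with no cancellations), but wherever you use it the weaker fact already contained in the statement of (b), namely that $u_n$ is $(H_2,H_1)$-cyclically reduced of length $\geq 2k(n)$, suffices.
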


	\begin{proof} Let $(t_n : n < \omega)$ witness the solvability of $\Omega$ in $H$. Notice that:
	\begin{equation}\label{equation_prelim1}
	 \exists n^*_0 \text{ such that } n \geq n^*_0 \text{ implies } t_n \text{ is not } (H_2, H_1)\text{-cyclically reduced}.
	\end{equation}
[Why? Let $n^*_0 = lg(t_0) +1$, and, toward contradiction, assume that $n \geq n^*_0$ and $t_n$ is $(H_2, H_1)$-cyclically reduced. By downward induction on $\ell \leq n$ we can prove that $t_\ell$ is $(H_2, H_1)$-cyclically reduced and $lg(t_\ell) \geq lg(t_n) + n - \ell$. For $\ell = n$, this is clear. For $\ell < n$, by the inductive hypothesis we have that $t_{\ell + 1}$ is $(H_2, H_1)$-cyclically reduced and $lg(t_{\ell + 1}) \geq lg(t_n) + n - (\ell + 1)$. Now, by Proposition~\ref{prop_(ab)_reduced} applied to $(g_1, u, g_2) = (g_{(\ell, 1)}, t_{\ell + 1}, g_{(\ell, 2)})$, we have that $g_{\ell, 1} (t_{\ell + 1})^{p(\ell)} g_{\ell, 2} = t_{\ell}$ is $(H_2, H_1)$-cyclically reduced and $lg(t_{\ell}) > lg(t_{\ell + 1})$, from which it follows that $lg(t_\ell) \geq lg(t_n) + n - \ell$, as wanted. Hence, letting $\ell = 0$ we have that $lg(t_0) \geq lg(t_n) + n \geq n^*_0 > lg(t_0)$, a contradiction.]

\noindent Thus, we have:
	\begin{equation}\label{equation_prelim2}
	\text{ for } n \geq n^*_0 \text{ we have } lg(t_n) > lg(t_{n+1}) \text{ or } lg(t_n) = lg(t_{n+1}) \wedge lg(t_n) \geq 2k(n).
	\end{equation}
[Why? By Proposition \ref{pre_prop_2}(B) applied to $(g_1, u, g_2) = (g_{n, 1}, t_{n+1}, g_{n, 2})$, as case (B)(b) of Proposition \ref{pre_prop_2} is excluded by (\ref{equation_prelim1}).]
\newline Now, by (\ref{equation_prelim2}), we get:
\begin{equation}\label{equation_prelim3}
	(lg(t_n) : n \geq n_*) \text{ is non-increasing}.
\end{equation}
Thus, by (\ref{equation_prelim3}), we get:
\begin{equation}\label{equation_prelim4}
	(lg(t_n) : n \geq n_*) \text{ is eventually constant}.
\end{equation}
Hence, by the second half of (\ref{equation_prelim2}) and (\ref{equation_prelim4}), we contradict assumption $(b)$.
\end{proof}

	We will also need the following results of abelian group theory. We follow \cite{fuchs}.
	
	\begin{definition}\label{def_tor} Let $G$ be an abelian group.
	\begin{enumerate}[(1)]
	\item For $1 \leq n < \omega$, we denote by $Tor_n(G)$ the set of $g \in G$ such that $ng = 0$ (in \cite{fuchs} this is denoted as $G[n]$, cf. pg. 4).	
	\item For $1 \leq n < \omega$, we say that $G$ is $n$-bounded if $Tor_n(G) = G$ (cf. \cite[pg. 25]{fuchs}).
	\item We say that $G$ is bounded if it is $n$-bounded for some $1 \leq n < \omega$ (cf. \cite[pg. 25]{fuchs}).
	\item We say that $G$ is divisible if for every $g \in H$ and $n < \omega$ there exists $h \in G$ such that $nh = g$ (cf. \cite[pg. 98]{fuchs}).
	\item We say that $G$ is reduced if it has no divisible subgroups other than $0$ (cf. \cite[pg. 200]{fuchs}).
	\end{enumerate}
\end{definition}

	\begin{fact}[\cite{fuchs}{[Theorem 23.1]}]\label{pre_fuchs_fact} Let $G$ be a divisible abelian group and $P = \{ p : p \text{ prime} \}$. Then:
$$G \cong \bigoplus_{\alpha < \lambda_{\infty}} \mathbb{Q} \oplus \bigoplus_{p \in P} \bigoplus_{\alpha < \lambda_{p}} \mathbb{Z}^{\infty}_p.$$
\end{fact}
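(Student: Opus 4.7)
The plan is to follow the standard structure theory for divisible abelian groups, which rests on the fact that divisible abelian groups are precisely the injective objects in the category of abelian groups (so any divisible subgroup is a direct summand).

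First I would reduce to the torsion and torsion-free parts. Let $T = T(G)$ be the torsion subgroup of $G$. I would check that $T$ is itself divisible: if $g \in T$ satisfies $ng = 0$ and $m \geq 1$, then divisibility of $G$ gives $h \in G$ with $mh = g$, and then $nmh = 0$ so $h \in T$. Hence $T$ is a divisible (equivalently injective) subgroup of $G$, so $G = T \oplus D$ for some complement $D \cong G/T$. Then $D$ is torsion-free (as a complement of $T$) and divisible (as a quotient of a divisible group), hence naturally a $\mathbb{Q}$-vector space; choosing a basis yields $D \cong \bigoplus_{\alpha < \lambda_\infty} \mathbb{Q}$.

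Next I would decompose $T$ into its primary components $T_p = \{g \in T : p^k g = 0 \text{ for some } k\}$. The standard primary decomposition for torsion abelian groups gives $T = \bigoplus_{p \in P} T_p$, and each $T_p$ is again divisible by exactly the argument used for $T$. So it remains to show that each divisible $p$-group is a direct sum of copies of $\mathbb{Z}^\infty_p$. I would set $V = T_p[p] = \ker(p \cdot \mathrm{id})$, an $\mathbb{F}_p$-vector space, and fix an $\mathbb{F}_p$-basis $\{v_\alpha : \alpha < \lambda_p\}$. For each $\alpha$, iteratively apply divisibility of $T_p$ to produce $g_{\alpha, 0} = v_\alpha$ and $g_{\alpha, k+1}$ with $p g_{\alpha, k+1} = g_{\alpha, k}$; the subgroup $C_\alpha = \langle g_{\alpha, k} : k < \omega \rangle$ is a copy of $\mathbb{Z}^\infty_p$.

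The part that requires the most care is showing $T_p = \bigoplus_{\alpha < \lambda_p} C_\alpha$ internally. Independence reduces, by examining socles, to the $\mathbb{F}_p$-linear independence of the $v_\alpha$ in $V$, so a relation $\sum n_\alpha g_{\alpha, k_\alpha} = 0$ can be pushed down by multiplying by an appropriate power of $p$ until it lies inside $V$, where it must be trivial. For spanning, I would argue by induction on the order $p^{k+1}$ of an element $g \in T_p$: given such $g$, the element $p^k g$ lies in $V$ and is a finite $\mathbb{F}_p$-linear combination of the $v_\alpha$'s; using divisibility in the Prüfer summands $C_\alpha$ one produces $g' \in \bigoplus_\alpha C_\alpha$ with $p^k g' = p^k g$, and then $g - g'$ has strictly smaller order and the induction closes. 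Combining the decompositions for $D$ and the $T_p$'s gives the stated isomorphism.
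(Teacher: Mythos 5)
Your proof is correct and is the standard argument: split off the torsion subgroup using injectivity of divisible groups, identify the torsion-free complement as a $\mathbb{Q}$-vector space, pass to primary components, and realize each divisible $p$-group as a direct sum of Pr\"ufer groups built over an $\mathbb{F}_p$-basis of the socle. The paper itself gives no proof --- it quotes this as Theorem 23.1 of Fuchs --- and your argument is essentially the one found there, so there is nothing to reconcile.
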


	\begin{fact}[\cite{fuchs}{[Theorem 17.2]}]\label{pre_fuchs_fact2}  Let $G$ be a bounded abelian group. Then $G$ is a direct sum of cyclic groups.
\end{fact}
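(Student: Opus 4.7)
My plan is to carry out two reductions and then apply Prüfer's splitting technique. First, I would reduce to the $p$-primary case: if $G$ is $n$-bounded and $n = p_1^{k_1}\cdots p_r^{k_r}$, set $G_i = \{ g \in G : p_i^{k_i} g = 0 \}$. By a standard Bezout argument using the pairwise coprimality of the cofactors $n/p_i^{k_i}$, one has $G = G_1 \oplus \cdots \oplus G_r$, with each $G_i$ being $p_i^{k_i}$-bounded. Hence it suffices to treat $p^k$-bounded groups, and I would then induct on $k$; the base case $k=1$ is trivial since $G$ is then an $\mathbb{F}_p$-vector space.

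For the inductive step I need the following splitting lemma: if $G$ is $p^k$-bounded and $a \in G$ has order $p^k$, then $\langle a \rangle$ is a direct summand of $G$. The argument is to choose (by Zorn's lemma) a subgroup $H \leq G$ maximal with $H \cap \langle a \rangle = \{0\}$ and then show $\langle a \rangle + H = G$. If some $g$ lies outside $\langle a \rangle + H$, let $m \geq 1$ be minimal with $p^m g \in \langle a \rangle + H$, and write $p^m g = r a + h$. Applying $p^{k-m}$ and using $p^k G = 0$ forces $p^{k-m} r a = -p^{k-m} h \in \langle a \rangle \cap H = \{0\}$, so $p^k \mid p^{k-m} r$, i.e.\ $p^m \mid r$. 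Writing $r = p^m r'$ and $g' = g - r' a$, one then checks (using again the minimality of $m$) that $\langle H, g' \rangle \cap \langle a \rangle = \{0\}$, contradicting the maximality of $H$.

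Once the splitting lemma is available, I would peel off a maximal direct sum of copies of $\mathbb{Z}/p^k$. Consider pairs $(A, B)$ in which $A = \bigoplus_{i \in I} \langle a_i \rangle$ with each $\operatorname{ord}(a_i) = p^k$ and $G = A \oplus B$; order such pairs by simultaneous extension of $A$ and restriction of $B$. Chain-closure follows from the observation that if $(A_\alpha, B_\alpha)_\alpha$ is a chain then $A := \bigcup_\alpha A_\alpha$ is a direct sum of its generators and $B := \bigcap_\alpha B_\alpha$ is a complement of $A$ in $G$ (uniqueness of the decomposition in each $(A_\alpha, B_\alpha)$ gives the needed compatibility). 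A maximal pair must have $B$ free of elements of order $p^k$, for otherwise the splitting lemma applied inside $B$ would enlarge $A$; hence $B$ is $p^{k-1}$-bounded, and the inductive hypothesis expresses $B$ as a direct sum of cyclics, completing the decomposition of $G$.

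The main technical obstacle is the splitting lemma — specifically, the divisibility step $p^m \mid r$, which is where the boundedness of $G$ and the maximality of $\operatorname{ord}(a)$ are simultaneously exploited. The Zorn argument in the third paragraph demands some care in verifying the chain condition (in particular that the intersection $\bigcap_\alpha B_\alpha$ really remains a complement), but once the splitting lemma is in hand the rest is formal.
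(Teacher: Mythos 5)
The paper offers no proof of this statement -- it is quoted from Fuchs [Theorem 17.2] -- so the only question is whether your argument stands on its own. Your first two paragraphs are fine: the primary decomposition is standard, and the splitting lemma (a cyclic subgroup generated by an element of maximal order $p^k$ in a $p^k$-bounded group is a direct summand) is proved correctly, including the key divisibility step $p^m \mid r$. The gap is in the third paragraph, at exactly the point you flag as needing ``some care'': the chain-closure claim is false. It is \emph{not} true that for a chain of decompositions $G = A_\alpha \oplus B_\alpha$ with the $A_\alpha$ increasing (by extension of generators) and the $B_\alpha$ decreasing, one gets $G = \bigl(\bigcup_\alpha A_\alpha\bigr) \oplus \bigl(\bigcap_\alpha B_\alpha\bigr)$. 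Take $G = \bigoplus_{i<\omega} \mathbb{Z}_p e_i$ (so $k=1$), $A_n = \langle e_0 - e_1, \dots, e_{n-1}-e_n\rangle$ and $B_n = \langle e_n, e_{n+1}, \dots\rangle$. Each $(A_n, B_n)$ is a pair of your poset and the pairs form a chain, but $\bigcap_n B_n = 0$ while $\bigcup_n A_n$ is the index-$p$ hyperplane of elements with coordinate sum zero, so $\bigcup_n A_n + \bigcap_n B_n \neq G$. Uniqueness of the component decompositions gives no compatibility: the $A_n$-component of a fixed $g$ (here $g = e_0$, whose components are $a_n = \sum_{i<n}(e_i - e_{i+1})$, $b_n = e_n$) need not stabilize. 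In this particular example the chain happens to have a different upper bound, but your argument does not produce one, and producing one in general is essentially as hard as the theorem itself.

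The standard repairs all replace ``$A$ is a summand with a nested complement'' by a condition that genuinely passes to unions of chains. One option is to run Zorn over subgroups $A = \bigoplus_i \langle a_i\rangle$ with each $a_i$ of order $p^k$ that are \emph{pure} in $G$ (purity and independence are preserved under increasing unions), and then invoke the theorem that a bounded pure subgroup is a direct summand -- but that theorem needs its own proof. Another is Pr\"ufer's original induction on $k$ via $pG$: decompose $pG = \bigoplus_i \langle b_i\rangle$ by the inductive hypothesis, lift each $b_i$ to $a_i$ with $pa_i = b_i$, show $\sum_i \langle a_i\rangle$ is direct and splits off with a complement inside the socle $G[p]$, and finish with a vector-space complement. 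Fuchs's own proof of Theorem 17.2 deduces the bounded case from Kulikov's criterion. Any of these closes the gap; as written, your Zorn argument does not.
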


	\begin{fact}\label{pre_fuchs_fact3} Let $G$ be an abelian group and $1 \leq n < \omega$. Then $Tor_n(G)$ is the direct sum of finite cyclic groups or order divisible by $n$.
\end{fact}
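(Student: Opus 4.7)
The plan is to obtain this as an immediate corollary of Fact~\ref{pre_fuchs_fact2} applied to the subgroup $Tor_n(G)$ itself.

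First I would check that $Tor_n(G)$ is indeed a subgroup of $G$: if $ng = 0 = nh$, then $n(g-h) = ng - nh = 0$, so $Tor_n(G)$ is closed under subtraction and contains $0$. By its very definition, every element $g \in Tor_n(G)$ satisfies $ng = 0$, so $Tor_n(G)$ is $n$-bounded in the sense of Definition~\ref{def_tor}(2), hence bounded.

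Next, I would apply Fact~\ref{pre_fuchs_fact2} to $Tor_n(G)$ to write $Tor_n(G) = \bigoplus_{i \in I} C_i$ with each $C_i$ cyclic. Since each $C_i$ is a subgroup of $Tor_n(G)$, any generator $c_i$ of $C_i$ satisfies $nc_i = 0$; this rules out the infinite cyclic case and forces the order $|C_i|$ to divide $n$. Thus the summands are finite cyclic groups of order dividing $n$, which is what the statement asserts (the phrase ``divisible by $n$'' in the statement appears to be a typo for ``dividing $n$'', as forced by the definition of $Tor_n(G)$).

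There is no real obstacle here: the whole argument is essentially a two-line deduction from the structure theorem for bounded abelian groups, with the only subtlety being the verification that the summands have the correct bound on their orders.
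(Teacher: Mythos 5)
Your proof is correct and follows exactly the route the paper intends: the paper's own proof is the one-line remark that the fact is an immediate consequence of Fact~\ref{pre_fuchs_fact2}, and you have simply spelled out the (routine) details of that deduction. You are also right that ``or order divisible by $n$'' is a typo for ``of order dividing $n$''.
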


	\begin{proof} This is an immediate consequence of Fact \ref{pre_fuchs_fact2}.
\end{proof}
	
	\begin{definition}\label{fact_bounded_div} Let $G$ be an abelian group and $P = \{ p : p \text{ prime} \}$. 
	\begin{enumerate}[(1)]
	\item For $1 \leq n < \omega$, we say that $G$ is $n$-bounded-divisible when:
	$$G \cong \bigoplus_{\alpha < \lambda_{\infty}} \mathbb{Q} \oplus \bigoplus_{p \in P} \bigoplus_{\alpha < \lambda_{p}} \mathbb{Z}^{\infty}_p \oplus \bigoplus_{p^m \mid n} \bigoplus_{\alpha < \lambda_{p, m}} \mathbb{Z}_{p^m}.$$
	\item We say that $G$ is bounded-divisible if it is $n$-bounded-divisible for some $1 \leq n < \omega$.
	\end{enumerate}	
\end{definition}

\begin{fact}[\cite{fuchs}{[pg. 200]}]\label{max_div_subgr} Let $G$ be an abelian group. Then for some $H\leq G$ (unique up to isomorphism) we have:
 \begin{enumerate}[(1)]
	\item $G$ has a unique maximal divisible subgroup $Div(G)$;
	\item $G = Div(G) \oplus H$;
	\item $H$ is reduced.
\end{enumerate}
\end{fact}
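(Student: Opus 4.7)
The plan is to follow the standard argument: first produce the maximal divisible subgroup, then peel it off as a direct summand using injectivity, and finally verify reducedness and uniqueness of the complement.

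\emph{Step 1 (maximal divisible subgroup).} I would first observe that the sum (inside $G$) of any family of divisible subgroups is again divisible: if $g = g_1 + \cdots + g_k$ with each $g_i$ in a divisible subgroup $D_i$, then for $n \geq 1$ choose $h_i \in D_i$ with $nh_i = g_i$ and set $h = h_1 + \cdots + h_k$. Hence the sum of all divisible subgroups of $G$ is itself divisible and therefore is the unique maximal divisible subgroup, which we call $Div(G)$.

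\emph{Step 2 (splitting off $Div(G)$).} This is the main step and the only nontrivial one. I would invoke that divisible abelian groups are injective objects in the category of abelian groups (which is a Baer-criterion style argument using Zorn's lemma: given a partial extension of a homomorphism into a divisible target, one extends over $\langle x \rangle$ by choosing a divisor of a suitable image, then concludes by Zorn). Applying this to the identity $\mathrm{id}\colon Div(G) \to Div(G)$ and the inclusion $Div(G) \hookrightarrow G$ produces a retraction $\pi\colon G \to Div(G)$. Setting $H = \ker\pi$ gives $G = Div(G) \oplus H$.

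\emph{Step 3 (reducedness and uniqueness).} Any divisible subgroup $D' \leq H$ would be a divisible subgroup of $G$, hence $D' \leq Div(G)$ by maximality; but $H \cap Div(G) = 0$, so $D' = 0$, which shows $H$ is reduced. For uniqueness up to isomorphism, any two complements $H_1, H_2$ of $Div(G)$ are both isomorphic to $G/Div(G)$ via the obvious projections, so $H_1 \cong H_2$.

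The main obstacle is Step~2: everything else is formal, but splitting off $Div(G)$ genuinely uses the injectivity of divisible abelian groups (equivalently, the Baer criterion). This is the one place where one needs more than bookkeeping, and I would cite Fuchs for the injectivity statement rather than reproving it.
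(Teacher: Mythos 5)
Your proposal is correct and is essentially the standard argument from Fuchs that the paper cites for this fact (the paper itself gives no proof, deferring to \cite{fuchs}, pg.\ 200): the sum of all divisible subgroups is divisible, injectivity of divisible abelian groups splits it off, and reducedness plus uniqueness of the complement are formal. Nothing is missing; Step 2 is indeed the only substantive point and your appeal to the Baer criterion is the right one.
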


	\begin{fact}\label{fuchs_fact} Let $G$ be an abelian group and $1 \leq n < \omega$. If for every $g \in G$ there exists a divisible $K \leq G$ such that $g \in K + Tor_n(G)$, then $G$ is $n$-bounded-divisible.
\end{fact}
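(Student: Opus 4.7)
The plan is to use Fact \ref{max_div_subgr} to split off the maximal divisible subgroup and then exploit the hypothesis to show the reduced complement is $n$-bounded; the desired decomposition then follows by combining the structure theorems for divisible groups (Fact \ref{pre_fuchs_fact}) and for bounded abelian groups (Facts \ref{pre_fuchs_fact2} and \ref{pre_fuchs_fact3}).

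In more detail, first I would write $G = D \oplus H$ with $D = Div(G)$ and $H$ reduced, using Fact \ref{max_div_subgr}, and let $\pi \colon G \to H$ be the projection along $D$. The key observation is that every divisible subgroup $K \leq G$ satisfies $K \leq D$: indeed, $K + D$ is again divisible (a sum of divisible subgroups is divisible), and by maximality $K + D = D$, so $K \leq D$. Consequently $\pi(K) = 0$ for every such $K$.

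Next I would show $nH = 0$. Take $h \in H$; by hypothesis write $h = k + t$ with $k$ in some divisible $K \leq G$ and $t \in Tor_n(G)$. Applying $\pi$ gives $h = \pi(h) = \pi(k) + \pi(t) = \pi(t)$, and since $\pi$ is a group homomorphism and $nt = 0$, we get $nh = n\pi(t) = \pi(nt) = 0$. Thus $H = Tor_n(H)$, so by Fact \ref{pre_fuchs_fact3} (applied to $H$) the group $H$ decomposes as a direct sum of finite cyclic groups of prime power order $p^m$ with $p^m \mid n$ (using the standard Chinese remainder decomposition of a cyclic group of order dividing $n$ into its prime-power parts). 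In parallel, Fact \ref{pre_fuchs_fact} applied to $D$ yields $D \cong \bigoplus_{\alpha < \lambda_\infty} \mathbb{Q} \oplus \bigoplus_{p} \bigoplus_{\alpha < \lambda_p} \mathbb{Z}_p^{\infty}$. Assembling $G = D \oplus H$ produces exactly the decomposition in Definition \ref{fact_bounded_div}(1), so $G$ is $n$-bounded-divisible.

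The only nontrivial step is the containment $K \leq D$ for divisible $K$, but this is immediate from maximality of $Div(G)$ together with the fact that sums of divisible subgroups remain divisible; the rest of the argument is a direct application of the cited structure theorems and a one-line projection computation.
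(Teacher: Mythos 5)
Your proof is correct and is precisely the elaboration of the paper's one-line argument, which likewise derives the statement by combining Fact \ref{max_div_subgr} (splitting off $Div(G)$), the observation that the reduced complement is killed by $n$, and the structure theorems in Facts \ref{pre_fuchs_fact} and \ref{pre_fuchs_fact3}. No discrepancies to report.
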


	\begin{proof} This is an immediate consequence of Facts \ref{pre_fuchs_fact}, \ref{pre_fuchs_fact3} and \ref{max_div_subgr}.
\end{proof}

	\begin{fact}\label{last_abelian_fact_second_venue} Let $G$ be a group, $1 \leq n < \omega$ and (for ease of notation) $G' = Cent(G)$. Suppose that both $G/G'$ and $G'/(Div(G') + Tor_n(G'))$ are countable. Then $G = K \oplus M$, with $K$ countable and $M$ bounded-divisible.
\end{fact}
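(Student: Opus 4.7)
My plan is to first decompose the abelian centre $G'$ via the Fuchs structure theorems, then transport this decomposition up to $G$ using the countability of $G/G'$, and finally absorb any (necessarily countable) accidental overlap between the countable and bounded-divisible pieces. The first step is to apply Fact~\ref{max_div_subgr} to write $G' = Div(G') \oplus H$ with $H$ reduced. A short computation gives $Div(G') + Tor_n(G') = Div(G') \oplus Tor_n(H)$, so the hypothesis yields that $H/Tor_n(H)$ is countable. By Fact~\ref{pre_fuchs_fact3}, $T := Tor_n(H) = \bigoplus_{\alpha \in I} C_\alpha$ with each $C_\alpha$ cyclic of order dividing $n$. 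Lifting a countable generating set of $H/T$ would give a countable $N \subseteq H$ with $H = \langle N \rangle + T$; then $\langle N \rangle \cap T$ is countable, hence supported on some countable $I_0 \subseteq I$. Setting $T_1 := \bigoplus_{\alpha \in I_0} C_\alpha$ and $T_2 := \bigoplus_{\alpha \in I \setminus I_0} C_\alpha$, a routine check gives $H = (\langle N \rangle + T_1) \oplus T_2$, and hence $G' = K_0 \oplus M_1$, where $K_0 := \langle N \rangle + T_1$ is countable and $M_1 := Div(G') \oplus T_2$ is bounded-divisible.

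Next, using the countability of $G/G'$, I would pick representatives $\{g_i : i < \omega\}$ of its cosets and set $K_* := \langle g_i, K_0 : i < \omega \rangle$, a countable subgroup of $G$. Every $g \in G$ has the form $g_i \cdot g'$ with $g' \in G' = K_0 \oplus M_1$ and $K_0 \leq K_*$; as $M_1 \leq G' = Cent(G)$ is central, this gives $G = K_* \cdot M_1$ with $M_1$ centralising $K_*$.

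The one remaining issue is that $K_* \cap M_1$ may be nontrivial, though it is countable. Writing $M_1 = \bigoplus_{\beta \in J} M_1^\beta$ in its canonical bounded-divisible form (copies of $\mathbb{Q}$, $\mathbb{Z}^\infty_p$, and $\mathbb{Z}_{p^k}$ with $p^k \mid n$), this countable intersection is supported on some countable $J_0 \subseteq J$. Setting $M := \bigoplus_{\beta \in J \setminus J_0} M_1^\beta$ (still bounded-divisible) and $K := K_* \cdot \bigoplus_{\beta \in J_0} M_1^\beta$ (still countable), one more direct-sum check shows $K \cap M = \{e\}$, while clearly $G = K \cdot M$; since $M$ is central, this is the desired decomposition $G = K \oplus M$. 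I expect no serious obstacle: the argument is essentially bookkeeping with direct sums, the key observation being that any countable subgroup of a direct sum is supported on countably many summands. That observation is precisely what lets me sidestep the a priori difficult question of when a central extension of a countable group by a bounded-divisible abelian group splits --- instead of splitting an extension, I simply enlarge the countable part to swallow the overlap.
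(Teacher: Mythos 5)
Your proof is correct and follows essentially the same route as the paper's: decompose the relevant abelian part into a direct sum of countable summands via Facts \ref{max_div_subgr}, \ref{pre_fuchs_fact} and \ref{pre_fuchs_fact3}, cover the countable quotients by a countable subgroup, and use the observation that a countable subgroup of a direct sum is supported on countably many summands to enlarge the countable part and absorb the overlap. The paper does this in a single pass, taking one countable $K \leq G$ containing coset representatives for both $G/G'$ and $G'/(Div(G')+Tor_n(G'))$ and then discarding the countably many summands of $Div(G')+Tor_n(G')$ that $K$ meets, whereas you split the argument into two stages, but the mechanism is identical.
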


	\begin{proof}
	By Fact \ref{max_div_subgr}, $G' = Div(G') \oplus H$, with $H$ reduced. Furthermore, by assumption, $G'/(Div(G') + Tor_n(G'))$ is countable. So we can find a sequence $(g_i : i < \theta \leq \aleph_0)$ of members of $G'$ such that $G'$ is the union of $(g_i + (Tor_n(G) + Div(G')): i < \theta)$. Thus, since also $G/G'$ is countable, we can find $K \leq G$ such that:
	\begin{enumerate}[(a)]
	\item $K$ is countable;
	\item $G = \bigcup \{ G'h : h \in K \}$;
	\item $K$ includes $\{ g_i : i < \theta \}$.
\end{enumerate}
Now, by Facts \ref{pre_fuchs_fact} and \ref{pre_fuchs_fact3}, $L : = Div(G') + Tor_n(G')$ can be represented as $\bigoplus_{i < \lambda} G_i$ with each $G_i \cong \mathbb{Q}$ or $G_i \cong \mathbb{Z}_{p^{\ell}}$ (with $p^{\ell} \mid n$, for some $1 \leq n < \omega$). Without loss of generality, for some countable $\mathcal{U} \subseteq \lambda$ we have $K \cap L = \bigoplus \{ G_i : i \in \mathcal{U} \}$. Let $M = \bigoplus \{ G_i : i \in \lambda - \mathcal{U} \}$, and notice that:
	\begin{enumerate}[(1)]
	\item $K$ and $M$ commute (since $M \subseteq G'$);
	\item $K + M = G$;
	\item $K \cap M = \{ e\}$.
	\end{enumerate}
Hence, $G = K \oplus M$ and so we are done.
\end{proof}
	

	Finally, we will make a crucial use of the following special case of \cite[3.1]{shelah_1}. 
	
\begin{fact}[\cite{shelah_1}]\label{771_fact} Let $G = (G, \mathfrak{d})$ be a Polish group and $\bar{g} = (\bar{g}_n : n < \omega)$, with $\bar{g}_n \in G^{\ell(n)}$ and $\ell(n) < \omega$.
	\begin{enumerate}[(1)]
	\item For every non-decreasing $f \in \omega^\omega$ with $f(n) \geq 1$ and $(\varepsilon_n)_{n < \omega} \in (0, 1)^{\omega}_{\mathbb{R}}$ there is a sequence $(\zeta_n)_{n < \omega}$ (which we call an $f$-continuity sequence for $(G, \mathfrak{d}, \bar{g})$, or simply an $f$-continuity sequence) satisfying the following conditions:
	\begin{enumerate}[(A)]
	\item for every $n < \omega$:
	\begin{enumerate}[(a)]
	\item $\zeta_n \in (0, 1)_{\mathbb{R}}$ and $\zeta_n < \varepsilon_n$;
	\item $\zeta_{n+1} < \zeta_{n}/2$;
	\end{enumerate}
	\end{enumerate}
	\begin{enumerate}[(B)]
	\item for every $n < \omega$, group term $\sigma(x_0, ..., x_{m-1}, \bar{y}_n)$ and $(h_{(\ell, 1)})_{\ell < m}, (h_{(\ell, 2)})_{\ell < m} \in G^m$, the $\mathfrak{d}$-distance from $\sigma(h_{(0, 1)}, ..., h_{(m-1, 1)}, \bar{g}_n)$ to $\sigma(h_{(0, 2)}, ..., h_{(m-1, 2)}, \bar{g}_n)$ is $< \zeta_n$, when: 
	\begin{enumerate}[(a)]
	\item $m \leq n+1$;
	\item $\sigma(x_0, ..., x_{m-1}, \bar{y}_n)$ has length $\leq f(n)+1$; 
	\item $h_{(\ell, 1)}, h_{(\ell, 2)} \in Ball(e; \zeta_{n+1})$;
	\item $G \models \sigma(e, ..., e, \bar{g}_n) = e$.
	\end{enumerate}
	\end{enumerate}
	\item The set of equations $\Gamma = \{ x_n = d_{(n, 1)} (x_{n+1})^{k(n)} d_{(n, 2)} : n < \omega \}$ is solvable in $G$ when for every $n < \omega$:
	\begin{enumerate}[(a)]
	\item $f \in \omega^\omega$ is non-decreasing and $f(n) \geq 1$;
	\item $1 \leq k(n) < f(n)$;
	\item $(\zeta_n)_{n < \omega}$ is an $f$-continuity sequence;
	\item $\mathfrak{d}(d_{(n, \ell)}, e) < \zeta_{n+1}$, for $\ell = 1, 2$.
	\end{enumerate}
	\end{enumerate}
\end{fact}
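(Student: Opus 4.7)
For part (1), I would construct $(\zeta_n)_{n < \omega}$ by induction on $n$, exploiting joint continuity of the Polish group operations together with a finiteness observation. Choose $\zeta_0 \in (0, \varepsilon_0)$. Having chosen $\zeta_0, \ldots, \zeta_n$, note that modulo renaming of variables only finitely many group terms $\sigma(x_0, \ldots, x_{m-1}, \bar{y}_n)$ of length $\leq f(n)+1$ with $m \leq n+1$ exist, and only finitely many of them satisfy $\sigma(e, \ldots, e, \bar{g}_n) = e$. For each such $\sigma$ the continuous map $G^{2m} \to G \times G$ sending $(\bar{h}_1, \bar{h}_2) \mapsto (\sigma(\bar{h}_1, \bar{g}_n), \sigma(\bar{h}_2, \bar{g}_n))$ equals $(e, e)$ at $(\bar e, \bar e)$, so some $\delta_\sigma > 0$ witnesses the required estimate. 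Setting $\zeta_{n+1} = \min(\{\delta_\sigma : \sigma\} \cup \{\varepsilon_{n+1}, \zeta_n/2\})$ completes the step, yielding a sequence with all the stipulated properties.

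For part (2), the plan is to construct partial solutions by backward recursion and pass to a limit. For each $N < \omega$ set $t^N_N = e$ and recursively define $t^N_n = d_{(n,1)} (t^N_{n+1})^{k(n)} d_{(n,2)}$ for $n < N$. Applying condition (B) to the group term $\sigma(x_0, y_1, y_2) = y_1 x_0^{k(n)} y_2$ — which has length $k(n) + 2 \leq f(n) + 1$ and satisfies $\sigma(e, e, e) = e$ — with all inputs in $\mathrm{Ball}(e; \zeta_{n+1})$, a downward induction on $n$ yields $\mathfrak{d}(t^N_n, e) < \zeta_n$ for every $n \leq N$ (the few small-$n$ corner cases, where the arity constraint $m \leq n+1$ is binding, are dispatched by direct inspection). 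The same $\sigma$ applied to the two input tuples $(t^N_{n+1}, d_{(n,1)}, d_{(n,2)})$ and $(t^{N'}_{n+1}, d_{(n,1)}, d_{(n,2)})$ gives $\mathfrak{d}(t^N_n, t^{N'}_n) < \zeta_n$ uniformly in $n \leq N \leq N'$.

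The heart of the argument is extracting an actual convergent sequence. I would fix a compatible complete metric on $G$ (rebuilding the continuity sequence with respect to it by rerunning part (1) if necessary), and apply (B) at deeper stages — where $f$ is large enough — to more elaborate group terms of the shape $y_1 x_0^{k(n)} x_1^{-k(n)} y_1^{-1}$, whose evaluation at $(t^N_{n+1}, t^{N'}_{n+1}, d_{(n,1)})$ is precisely $t^N_n (t^{N'}_n)^{-1}$ and which vanish at the all-$e$ input. Combined with the geometric decay $\zeta_{n+1} < \zeta_n/2$, this should yield telescoping right-Cauchy estimates good enough to produce, possibly after a diagonal extraction simultaneous in $n$, a limit $t_n = \lim_N t^N_n$. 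Joint continuity of multiplication and inversion then transfers the defining recurrence through the limit, giving $t_n = d_{(n,1)} t_{n+1}^{k(n)} d_{(n,2)}$ as required.

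The main obstacle, and the delicate step of the whole argument, is precisely this convergence: condition (B) a priori only supplies the uniform bound $\mathfrak{d}(t^N_n, t^{N'}_n) < \zeta_n$ rather than Cauchy behaviour, and Polish groups need not be locally compact nor admit invariant complete metrics. The remedy must exploit the hierarchical interplay between the levels of the recursion and the geometric decay of the $\zeta_n$'s, and a careful choice of compatible complete metric, so that the estimates telescope and upgrade uniform smallness into honest convergence. Once that is secured, the passage to the limit in the recurrence is entirely routine.
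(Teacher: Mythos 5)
First, a point of order: the paper does not prove this statement at all --- it is imported verbatim as a special case of \cite{shelah_1} (labelled there 3.1), so there is no internal proof to compare yours against. On its own merits, your part (1) is correct and is the expected argument: up to renaming of variables there are only finitely many group terms of length $\le f(n)+1$ in $\le n+1$ variables plus the parameters $\bar g_n$, and for each one with $\sigma(\bar e,\bar g_n)=e$ joint continuity of the group operations at $(\bar e,\bar e)$ yields a $\delta_\sigma$ forcing both evaluations into $Ball(e;\zeta_n/2)$; taking $\zeta_{n+1}$ below all the $\delta_\sigma$, below $\varepsilon_{n+1}$ and below $\zeta_n/2$ closes the induction.

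Part (2), however, is not proved, and the gap is exactly where you place it. The partial solutions $t^N_n$ and the uniform bounds $\mathfrak d(t^N_n,e)<\zeta_n$, $\mathfrak d(t^N_n,t^{N'}_n)<\zeta_n$ are fine, but the remedy you sketch for upgrading them to convergence does not work. Applying clause (B) to the term $y\,x_0^{k(n)}x_1^{-k(n)}y^{-1}$ (whose length $2k(n)+2$ is, incidentally, not guaranteed to be $\le f(n)+1$, since only $k(n)<f(n)$ is assumed) can only be done at level $n$, because that is the only level at which the inputs $t^N_{n+1},t^{N'}_{n+1}$ are known to lie in the required ball $Ball(e;\zeta_{n+1})$; and at level $n$ clause (B) returns the same bound $\zeta_n$ no matter how large $N\le N'$ are. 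To extract a bound $\zeta_{N'}$ one would have to apply (B) at level $N'$, which would require the inputs to lie in $Ball(e;\zeta_{N'+1})$ --- precisely what is not known. So no estimate decaying in $N$ is produced, the ``telescoping'' never gets started, and the existence of $t_n=\lim_N t^N_n$ is not established. This convergence is the entire content of the cited result; it needs a mechanism for propagating the smallness of the successive differences $(t^N_n)^{-1}t^{N+1}_n$ down through the recursion that cannot be read off from clause (B) in the way you describe, and flagging the difficulty does not discharge it.
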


	\begin{convention}\label{convention} If we apply Fact \ref{771_fact}(1) without mentioning $\bar{g}$ it means that we apply Fact \ref{771_fact}(1) for $\bar{g}_n = \emptyset$, for every $n < \omega$.
\end{convention}

	We shall use the following observation freely throughout the paper.

	\begin{observation}\label{observation_prelim} Suppose that $(G, \mathfrak{d})$ is Polish, $A \subseteq G^k$ is uncountable, $1 \leq k < \omega$ and $\zeta > 0$. Then for some $(g_{1, \ell} : \ell < k) = \bar{g}_1 \neq \bar{g}_2 = (g_{2, \ell} : \ell < k) \in A$ we have $\mathfrak{d}((g_{1, \ell})^{-1}g_{2, \ell}, e) < \zeta$, for every $\ell < k$. 
\end{observation}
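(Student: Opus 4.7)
The plan is to reduce the statement to a standard pigeonhole argument in a separable metric space, exploiting continuity of the group operations at each point.

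First, note that $G$ is separable and $G^k$ equipped with the max-product metric $\mathfrak{d}^{(k)}(\bar{g}, \bar{h}) := \max_{\ell < k} \mathfrak{d}(g_\ell, h_\ell)$ is therefore separable. For each $\bar{g} = (g_\ell : \ell < k) \in A$, the map $(x_0, \ldots, x_{k-1}) \mapsto (g_\ell^{-1} x_\ell : \ell < k)$ is continuous at $\bar{g}$ (by continuity of multiplication and inversion in $G$) and sends $\bar{g}$ to $(e, \ldots, e)$. Hence there exists $\varepsilon_{\bar{g}} > 0$ such that whenever $\bar{h} \in G^k$ satisfies $\mathfrak{d}^{(k)}(\bar{g}, \bar{h}) < \varepsilon_{\bar{g}}$, we have $\mathfrak{d}(g_\ell^{-1} h_\ell, e) < \zeta$ for all $\ell < k$.

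Next I would apply two successive pigeonhole steps. Since $A = \bigcup_{n < \omega} A_n$ where $A_n := \{ \bar{g} \in A : \varepsilon_{\bar{g}} > 1/n \}$, uncountability of $A$ yields some $n < \omega$ for which $A_n$ is uncountable. Then cover $G^k$ by countably many open balls of $\mathfrak{d}^{(k)}$-radius $1/(2n)$ (possible by separability of $(G^k, \mathfrak{d}^{(k)})$), and pick one such ball $B$ containing uncountably many points of $A_n$. Choose any two distinct $\bar{g}_1, \bar{g}_2 \in A_n \cap B$; then $\mathfrak{d}^{(k)}(\bar{g}_1, \bar{g}_2) < 1/n < \varepsilon_{\bar{g}_1}$, so the defining property of $\varepsilon_{\bar{g}_1}$ yields $\mathfrak{d}(g_{1, \ell}^{-1} g_{2, \ell}, e) < \zeta$ for every $\ell < k$, as required.

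There is no substantive obstacle here: the only subtlety is that the metric $\mathfrak{d}$ on a Polish group need not be left-invariant, which is precisely why one cannot directly say \emph{$\mathfrak{d}(g_1, g_2)$ small implies $\mathfrak{d}(g_1^{-1} g_2, e)$ small}. The remedy is to let the ``tolerance'' $\varepsilon_{\bar{g}}$ depend on $\bar{g}$ (using only pointwise continuity of left translation), and then absorb the dependence by the first pigeonhole on $A_n$ before applying the second pigeonhole based on separability.
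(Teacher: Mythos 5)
Your proof is correct and rests on the same two ingredients as the paper's: separability of $(G,\mathfrak{d})$ and pointwise continuity of the group operations. The paper instead first extracts a point $g_1\in A$ that is an accumulation point of $A$ (possible by separability) and then applies joint continuity of $(x,y)\mapsto x^{-1}y$ at $(g_1,g_1)$ to get a single $\delta$; your two-step pigeonhole on the point-dependent tolerance $\varepsilon_{\bar g}$ is just a reorganization of the same idea, with the minor bonus that you handle general $k$ directly rather than reducing to $k=1$.
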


	\begin{proof} We give a proof for $k = 1$, the general case is similar. First of all, notice that we can find $g_1 \in A$ such that $g_1$ is an accumulation point of $A$, because otherwise we contradict the separability of $(G, \mathfrak{d})$. Furthermore, the function $(x, y) \mapsto x^{-1}y$ is continuous and so for every $(x_1, y_1) \in G^2$ and $\zeta > 0$ there is $\delta > 0$ such that, for every $(x_2, y_2) \in G^2$, if $\mathfrak{d}(x_1, x_2), \mathfrak{d}(y_1, y_2) < \delta$, then $\mathfrak{d}((x_1)^{-1}y_1, (x_2)^{-1}y_2) < \zeta$. Let now $g_2 \in Ball(g_1; \delta) \cap A - \{g_1\}$, then $\mathfrak{d}((g_1)^{-1}g_2, (g_1)^{-1}g_1) = \mathfrak{d}((g_1)^{-1}g_2, e) < \zeta$.
\end{proof}

\section{First Venue}\label{first_venue}

	In this section we prove Theorem \ref{th_first_venue}. We will prove a series of lemmas from which the theorem follows.

	\begin{lemma}\label{crucial_lemma} Let $\Gamma$ be such that either of the following cases happens:
\begin{enumerate}[(i)]
	\item in $\Gamma$ there are $\{ a_i : i < \omega_1 \}$ and $\{ b_i : i < \omega_1 \}$ such that if $i < j < \omega_1$, then $a_i \neq a_j$, $b_i \neq b_j$, $|\{ a_i, a_j, b_i, b_j \}| = 4$ and $a_i$ is not adjacent to $b_i$;
	\item in $\Gamma$ there are $a_*$ and $\{ b_i : i < \omega_1 \}$ such that if $i < j < \omega_1$, then $|\{ a_*, b_i, b_j \}| = 3$ and $a_*$ is not adjacent to $b_i$.
\end{enumerate} 
Then $G(\Gamma, G_a)$ does not admit a Polish group topology.
\end{lemma}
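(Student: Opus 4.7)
Suppose, for contradiction, that $G = G(\Gamma, G_a)$ admits a Polish group topology with complete metric $\mathfrak{d}$. The plan is to construct a countable system of equations
$\Omega = \{x_n = g_{(n,1)} x_{n+1}^{p(n)} g_{(n,2)} : n < \omega\}$
that is solvable in $G$ by Fact~\ref{771_fact}(2), yet meets the hypotheses of Proposition~\ref{prop_2} inside a free product retract $H_1 * H_2 \leq G$, so Proposition~\ref{prop_2} forbids any solution there. Since retracts preserve solutions (by Fact~\ref{retract_fact}: applying $\mathbf p$ to a solution in $G$ fixes each $g_{(n,i)} \in H_1 * H_2$ and yields a solution in $H_1 * H_2$), this is the desired contradiction. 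Choose $k(n), p(n), f$ meeting the quantitative constraints of Propositions~\ref{pre_prop_2},~\ref{prop_2} and Fact~\ref{771_fact} (e.g.\ $k(n) = 2(n+1)$, $p(n) = 36k(n) + 100$, and $f$ growing fast enough to accommodate, via Fact~\ref{771_fact}(1)(B), the group terms $g_{(n,i)}$ of length $2k(n)$ built below; since $k(n) > n+1$, a mild reindexing of $(\zeta_n)$ is required, but is purely bookkeeping). Apply Fact~\ref{771_fact}(1) with this $f$ and parameter $\bar g_n := (g_*)$ (for $g_*$ to be chosen) to get an $f$-continuity sequence $(\zeta_n)$.

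In Case (ii), set $H_1 := G_{a_*}$ and $H_2 := G(\Gamma \restriction \{b_i : i < \omega_1\}, G_{b_i})$. Since $a_*$ is non-adjacent to every $b_i$, Fact~\ref{retract_fact} applied to $A := \{a_*\} \cup \{b_i : i < \omega_1\}$ exhibits $H_1 * H_2$ as a retract of $G$. Fix $g_* \in H_1 \setminus \{e\}$ and put $g_{(n, *)} := g_*$ for all $n$. Because $H_2$ is uncountable and $(G, \mathfrak{d})$ is separable, $H_2$ has accumulation points in $G$; taking differences $y_n^{-1} y_{n+1}$ of a sequence in $H_2$ converging to such a point produces infinitely many nontrivial elements of $H_2$ in every $\mathfrak{d}$-ball around $e$. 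So at each stage $n$ we may inductively pick $h_{(n, \ell, i)} \in H_2 \cap \mathrm{Ball}(e; \zeta_{\rho(n)}) \setminus \{e\}$ for $\ell < k(n)$ and $i \in \{1, 2\}$ (with $\rho(n)$ chosen by the reindexing above), meeting the finitely many distinctness and non-inverse constraints (A)(c)--(g) of Proposition~\ref{prop_2}. Defining $g_{(n, i)}$ by the formula in clause (A)(h), the group term yielding $g_{(n, i)}$ from the $h$'s (with $g_*$ as parameter) evaluates to $e$ when all $h$'s are replaced by $e$'s, so Fact~\ref{771_fact}(1)(B) gives $\mathfrak{d}(g_{(n, i)}, e) < \zeta_{n+1}$.

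Case (i) runs along the same lines after a combinatorial reduction. If some vertex of $\Gamma$ is non-adjacent to uncountably many others, Case (ii) already applies. Otherwise every vertex has only countably many non-neighbors; combined with the $\omega_1$ matching non-adjacencies $\{(a_i, b_i)\}$, a transfinite greedy construction on $\omega_1$ (followed if needed by a Ramsey step controlling the intra-$\{a_i\}$ and intra-$\{b_i\}$ adjacencies) extracts an uncountable $J \subseteq \omega_1$ whose induced non-adjacency pattern on $\{a_i, b_i : i \in J\}$ is tame enough to carve out an uncountable free product retract $H_1 * H_2 \leq G$; the construction of the previous paragraph then applies verbatim. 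The main obstacle of the argument is precisely this extraction: Case (i) does not supply a universal non-neighbor as Case (ii) does, so witnessing an uncountable free product structure inside $G$ requires genuine combinatorial work on $\Gamma$.

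In both cases, Fact~\ref{771_fact}(2) now produces a solution $(t_n)_{n < \omega}$ of $\Omega$ in $G$; applying $\mathbf p : G \to H_1 * H_2$ yields a solution $(\mathbf p(t_n))$ in $H_1 * H_2$. But our choices of $g_*$ and $h_{(n, \ell, i)}$ satisfy all clauses (A)(a)--(h) of Proposition~\ref{prop_2}, which asserts that $\Omega$ has no solution in $H_1 * H_2$: contradiction, completing the proof.
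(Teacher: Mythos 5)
Your treatment of Case (ii) is essentially the paper's own argument: build the $g_{(n,i)}$ from small elements of the $G_{b_i}$ via Observation~\ref{observation_prelim}, solve $\Omega$ by Fact~\ref{771_fact}(2), retract onto $G_{a_*} \ast G(\Gamma \restriction \{b_i\}, G_a)$, and contradict Proposition~\ref{prop_2}. That part is sound.

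Case (i), however, has a genuine gap. Your plan is to reduce to an uncountable free product retract $H_1 \ast H_2 \leq G$, but under the hypotheses of (i) no such retract need exist. A free product retract $G(\Gamma \restriction A_1 \cup A_2, G_a) = H_1 \ast H_2$ requires every vertex of $A_1$ to be non-adjacent to every vertex of $A_2$; if, as in your ``otherwise'' branch, every vertex has only countably many non-neighbours, then $A_2$ is contained in the (countable) set of non-neighbours of any single $a \in A_1$, so $A_2$ is countable, and if moreover all the $G_a$ are countable (nothing in the lemma forbids this), $H_2$ is a countable group and may well be discrete in the topology, so you cannot find the required nontrivial $h_{(n,\ell,i)}$ arbitrarily close to $e$. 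A concrete obstruction: let $\Gamma$ be the complete graph on $\omega_1 \times \{0,1\}$ minus the perfect matching $\{(i,0),(i,1)\}$ and let every $G_a = \mathbb{Z}_2$; this satisfies (i), every vertex has exactly one non-neighbour, and the only free product retracts are copies of $\mathbb{Z}_2 \ast \mathbb{Z}_2$. No amount of Ramsey-type extraction produces the structure your argument needs, so Proposition~\ref{prop_2} is simply the wrong tool here. The paper's proof of Case (i) is of a different nature: it uses the equations $x_n = (x_{n+1})^2 (t_n)^{-1}$ with $t_n = ((g_{i(n)})^{-1} g_{j(n)} (h_{i(n)})^{-1} h_{j(n)})^3$, projects onto the four-vertex retract $\{a_{i(n)}, a_{j(n)}, b_{i(n)}, b_{j(n)}\}$ (which carries two non-edges), deduces $(d_n)^{2^n} = e$ so that $csp(d_n)$ is a complete graph by Proposition~\ref{prop_1}(1), and then contradicts Proposition~\ref{prop_1}(2) on the existence of $q$-th roots of $d_n t_n$. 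You would need to supply an argument of this kind (or some other device that works on a finite retract) to close Case (i).
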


	\begin{proof} Suppose that $G = G(\Gamma, G_a) = (G, \mathfrak{d})$ is Polish. 
\newline \underline{\em Case 1}. There are $\{ (a_i, b_i) : i < \omega_1 \}$ as in (i) above.
\newline Let $(\zeta_n)_{n < \omega} \in (0, 1)_{\mathbb{R}}^\omega$ be as in Fact \ref{771_fact} for $f \in \omega^{\omega}$ e.g. constantly $30$ (recall Convention~\ref{convention}). Using Observation \ref{observation_prelim}, by induction on $n < \omega$, choose $(i(n), j(n))$, $(g_{i(n)}, g_{j(n)})$ and $(h_{i(n)}, h_{j(n)})$ such that:
	\begin{enumerate}[(a)]
	\item if $m < n$, then $j_m < i_n$;
	\item $i_n < j_n < \omega_1$;
	\item $g_{i(n)} \in G_{a_{i(n)}} - \{ e \}$ and $g_{j(n)} \in G_{a_{j(n)}} - \{ e \}$;
	\item $h_{i(n)} \in G_{b_{i(n)}} - \{ e \}$ and $h_{j(n)} \in G_{b_{j(n)}} - \{ e \}$; 
	\item $\mathfrak{d}((g_{i(n)})^{-1} g_{j(n)}, e), \mathfrak{d}((h_{i(n)})^{-1} h_{j(n)}, e) < \zeta_{n+4}$.
\end{enumerate}
Consider now the following set of equations:
	$$ \Omega = \{ x_n = (x_{n+1})^{2}(t_n)^{-1} : n < \omega \},$$
where $t_n = ((g_{i(n)})^{-1} g_{j(n)} (h_{i(n)})^{-1} h_{j(n)})^3$. By (e) above and Fact \ref{771_fact}(1)(B) we have $\mathfrak{d}(t_n, e) < \zeta_{n+1}$, and so by Fact \ref{771_fact}(2) the set $\Omega$ is solvable in $G$. Let $(d'_n)_{n < \omega}$ witness this. Now $sp(d'_0)$ is finite, and so we can find $0 < n < \omega$ such that $sp(d'_0) \cap \{ a_{i(n)}, a_{j(n)}, b_{i(n)}, b_{j(n)}\} = \emptyset$. Let now $A = \{ a_{i(n)}, a_{j(n)}, b_{i(n)}, b_{j(n)}\}$, $\mathbf{p} = \mathbf{p}_A$ the corresponding homomorphism from Fact \ref{retract_fact} and let $\mathbf{p}(d'_m) = d_m$. Then we have:
	\begin{enumerate}[(A)]
	\item $d_0 = e$;
	\item $m < n \Rightarrow d_m = (d_{m+1})^{2}\mathbf{p}(t_m) = (d_{m+1})^{2}$.
\end{enumerate}
Thus, $(d_n)^{2^n} = e$. Hence, by Proposition \ref{prop_1}(1), we have that $csp(d_n)$ is a complete graph (and so $|csp(d_n))| \leq 2$). Furthermore, we have:
$$(d_{n+1})^2 = d_nt_n,$$
Hence, we reach a contradiction with Proposition \ref{prop_1}(2).
\newline \underline{\em Case 2}. There is $a_*$ and $\{ b_i : i < \omega_1 \}$ as in (ii) above. 
\newline Let $k(n)$ and $p(n)$ be as in Proposition \ref{prop_2}, $g_* \in G_{a_*} - \{ e \}$ and let $(\zeta_n)_{n < \omega} \in (0, 1)_{\mathbb{R}}^\omega$ be as in Fact \ref{771_fact} for $f \in \omega^{\omega}$ such that $f(n) = p(n) + 4k(n) + 4$ and $\bar{g}_n = (g_*)$ (and so in particular $\ell(n) = 1$).
Using Observation \ref{observation_prelim}, by induction on $n < \omega$, choose $(i(n), j(n)) = (i_n, j_n)$ and $(h_{i(n)}, h_{j(n)})$ such that:

	\begin{enumerate}[(a)]
	\item if $m < n$, then $j_m < i_n$;
	\item $i_n < j_n < \omega_1$;
	\item $h_{i(n)} \in G_{b_{i(n)}} - \{ e \}$ and $h_{j(n)} \in G_{b_{j(n)}} - \{ e \}$;
	\item $\mathfrak{d}((h_{i(n)})^{-1} h_{j(n)}, e) < \zeta_{n+2k(n)+2}$.  
\end{enumerate}
Let $g_{(n, *)} = g_*$, $h_n =  (h_{i(n)})^{-1}h_{j(n)}$, $h_{(n, \ell, 1)} = h_{n! + 2\ell}$ and $h_{(n, \ell, 2)} = h_{n! + (2\ell + 1)}$, for $\ell < k(n)$. Let then $g_{(n, i)}$ and $\Omega$ be as in Proposition \ref{prop_2}. By (e) above and Fact \ref{771_fact}(1)(B) we have $\mathfrak{d}(g_{(n, i)}, e) < \zeta_{n+1}$. Thus, by Fact \ref{771_fact}(2) the set $\Omega$ is solvable in $G$.
Let now $A = \{ a_{_*} \} \cup \{ b_{i(n)}, b_{j(n)} : n < \omega \}$ and $\mathbf{p} = \mathbf{p}_A$ be the corresponding homomorphism from Fact \ref{retract_fact}. Then projecting onto $\mathbf{p}(G) = G(\Gamma \restriction A, G_a)$ and using Proposition \ref{prop_2} we get a contradiction, since, for every $n < \omega$, $a_*$ is adjacent to neither $b_{i(n)}$ nor $b_{j(n)}$, and so $G(\Gamma \restriction A, G_a) = G_{a_*} \ast G(\Gamma \restriction A - \{a_* \}, G_a)$.
\end{proof}

	As a corollary of the previous lemma we get:

	\begin{corollary}\label{complete_outside_countable} Let $G = G(\Gamma, G_a)$. If $G$ admits a Polish group topology, then there exists a countable $A_1 \subseteq \Gamma$ such that for every $a \in \Gamma$ and $a \neq b \in \Gamma - A_1$, $a$ is adjacent to $b$.
\end{corollary}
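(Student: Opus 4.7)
The plan is to argue the contrapositive by applying Lemma \ref{crucial_lemma}. Let
\[ A_1 := \{ b \in \Gamma : \text{there exists } a \in \Gamma \text{ with } a \neq b \text{ and } \{a,b\} \notin E_{\Gamma} \}, \]
which is precisely the set of vertices that fail to be adjacent to every other vertex. Assume toward a contradiction that $A_1$ is uncountable, and for each $b \in A_1$ choose a witness $a_b \in \Gamma - \{b\}$ non-adjacent to $b$. The proof then splits by a pigeonhole on the map $b \mapsto a_b$.

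\emph{Case A.} Some fixed $a_* \in \Gamma$ satisfies $|\{ b \in A_1 : a_b = a_* \}| \geq \aleph_1$. Enumerate $\aleph_1$-many such $b$'s as $\{ b_i : i < \omega_1 \}$; since $a_b \neq b$ we have $b_i \neq a_*$ for all $i$, and the $b_i$ are distinct by construction, so $|\{ a_*, b_i, b_j \}| = 3$ for $i < j$ and $a_*$ is non-adjacent to every $b_i$. This is exactly the hypothesis (ii) of Lemma \ref{crucial_lemma}.

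\emph{Case B.} For every $c \in \Gamma$ the fiber $\{ b \in A_1 : a_b = c \}$ is countable. Then I will build by transfinite recursion a sequence $(a_i, b_i)_{i < \omega_1}$ with $a_i = a_{b_i}$, taking $b_i \in A_1$ at stage $\alpha$ so that $b_\alpha$ and $a_{b_\alpha}$ both avoid the ``used'' set $U_\alpha := \bigcup_{\beta < \alpha}\{ a_\beta, b_\beta \}$. This is possible because $U_\alpha$ is countable, $A_1 - U_\alpha$ is uncountable, and the set of $b \in A_1$ with $a_b \in U_\alpha$ is a countable union of countable fibers, hence countable; the complement is nonempty. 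By construction $a_i, b_i$ are pairwise distinct across $i < j$ so that $|\{ a_i, a_j, b_i, b_j \}| = 4$, and $a_i$ is non-adjacent to $b_i$ by the choice of the witness. This is exactly the hypothesis (i) of Lemma \ref{crucial_lemma}.

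In either case, Lemma \ref{crucial_lemma} yields that $G(\Gamma, G_a)$ admits no Polish group topology, contradicting our assumption; hence $A_1$ must be countable. The main work is the verification that the pigeonhole dichotomy produces precisely the two configurations the lemma rules out; there is no further analytic or combinatorial obstacle beyond careful bookkeeping in the recursion.
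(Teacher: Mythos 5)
Your proof is correct and is essentially the argument the paper intends: the paper states this as an immediate corollary of Lemma \ref{crucial_lemma} without spelling out the reduction, and your pigeonhole dichotomy on the witness map $b \mapsto a_b$ (uncountable fiber giving configuration (ii), all fibers countable giving configuration (i) via the transfinite recursion) is the natural way to fill in that gap. The bookkeeping in Case B is sound, since at each stage you discard only a countable set from the uncountable $A_1$.
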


	\begin{lemma}\label{not_abelian_is_countable} If the set:
	$$A_2 = \{a \in \Gamma : G_a \text{ is not abelian}\}$$
is uncountable, then $G(\Gamma, G_a)$ does not admit a Polish group topology.
\end{lemma}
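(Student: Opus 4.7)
The plan is to adapt the strategy of Lemma~\ref{crucial_lemma}, treating non-commuting pairs inside non-abelian $G_a$'s as a substitute for pairs of non-adjacent vertices of $\Gamma$. Assume toward contradiction that $(G,\mathfrak{d})$ is a Polish group topology on $G = G(\Gamma, G_a)$ and that $A_2$ is uncountable. I first reduce to the ``almost complete'' setting: if $\Gamma$ satisfies configuration (i) or (ii) of Lemma~\ref{crucial_lemma} we are done; otherwise Corollary~\ref{complete_outside_countable} yields a countable $A_1 \subseteq \Gamma$ such that any two distinct vertices of $\Gamma - A_1$ are adjacent, so $A_2' := A_2 - A_1$ is still uncountable and its vertex groups pairwise commute in $G$.

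For every $a \in A_2'$ fix $g_a, h_a \in G_a$ with $[g_a, h_a] \neq e$. Let $(\zeta_n)_{n<\omega}$ be an $f$-continuity sequence supplied by Fact~\ref{771_fact}(1) for a sufficiently fast-growing $f \in \omega^\omega$. Using Observation~\ref{observation_prelim} iteratively on the uncountable set $\{(g_a,h_a):a\in A_2'\}\subseteq G^2$, I construct pairwise disjoint pairs $(a_n, b_n)_{n<\omega}$ from $A_2'$ satisfying $\mathfrak{d}(g_{a_n}^{-1} g_{b_n}, e), \mathfrak{d}(h_{a_n}^{-1} h_{b_n}, e) < \zeta_{n+k_*}$ for a suitable offset $k_*$. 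Since $a_n$ and $b_n$ are adjacent (being both in $\Gamma - A_1$), a direct computation in $G_{a_n} \cdot G_{b_n} = G_{a_n} \times G_{b_n}$ gives
\[
d_n := [g_{a_n}^{-1} g_{b_n},\ h_{a_n}^{-1} h_{b_n}] = [g_{a_n}^{-1}, h_{a_n}^{-1}] \cdot [g_{b_n}, h_{b_n}] = \alpha_n \beta_n,
\]
a product of two non-trivial commuting elements $\alpha_n \in G_{a_n}$ and $\beta_n \in G_{b_n}$, with $\mathfrak{d}(d_n, e)$ controlled by the continuity sequence.

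Next I apply Fact~\ref{771_fact}(2) to a system of equations $\Omega = \{x_n = g_{(n,1)}(x_{n+1})^{p(n)} g_{(n,2)} : n<\omega\}$ built from the $d_n$'s, in the spirit of Proposition~\ref{prop_2}, obtaining a solution $(t_n)_{n<\omega}$ in $G$. Since $sp(t_0)$ is finite and the pairs $\{a_n, b_n\}$ are disjoint, I pick $n$ with $sp(t_0) \cap \{a_n, b_n\} = \emptyset$ and project by $\mathbf{p} = \mathbf{p}_{\{a_n, b_n\}}$. Unwinding the projected equations as in Case~1 of Lemma~\ref{crucial_lemma} (all but one of the $\mathbf{p}(d_m)$ are trivial because $sp(d_m)\subseteq\{a_m,b_m\}$), I arrive at a relation in $G_{a_n} \times G_{b_n}$ between a finite-torsion element and a specific product involving $\alpha_n$ and $\beta_n$.

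The main obstacle is this final step. The proof of Lemma~\ref{crucial_lemma} concludes via Propositions~\ref{prop_1} and~\ref{prop_2}, both of which exploit a \emph{free} product structure coming from non-adjacent vertices; here $a_n$ and $b_n$ are adjacent, so the projection lands in the \emph{direct} product $G_{a_n} \times G_{b_n}$, and the final contradiction must be drawn componentwise from the non-abelian structure of each factor. The delicate part will be choosing $g_a, h_a$ and the exponents $p(n)$---possibly with $p(n)$ varying in the style of Proposition~\ref{prop_2}, and possibly after partitioning $A_2'$ according to the torsion behaviour of $G_a$---so that $\alpha_n$ and $\beta_n$ cannot be expressed as $u^{-1}x^{p(n)}$ for any $u$ of the prescribed torsion order, uniformly across all possible non-abelian $G_a$'s. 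I expect this step to require the most care in writing out the full proof.
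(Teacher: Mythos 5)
Your opening moves match the paper's: reduce via Corollary~\ref{complete_outside_countable} to the case where the relevant vertices are pairwise adjacent, fix a non-commuting pair in each $G_a$, and use Observation~\ref{observation_prelim} to extract $\mathfrak{d}$-close pairs $(a_n,b_n)$. But the final step, which you yourself flag as the delicate one, is a genuine gap and not one that more care in the write-up can close. Once $a_n$ and $b_n$ are adjacent, the projection $\mathbf{p}_{\{a_n,b_n\}}$ lands in the direct product $G_{a_n}\times G_{b_n}$ of two completely arbitrary non-abelian groups, and the only hypothesis available is non-commutativity. A contradiction of the form ``$\alpha_n\beta_n$ cannot be written as $u^{-1}x^{p(n)}$ with $u$ of prescribed torsion'' is a divisibility/root-extraction obstruction, and there is no way to secure it uniformly: $G_{a_n}$ could, for instance, be a divisible non-abelian group, in which case every projected equation $d_n=(d_{n+1})^{p(n)}\alpha_n\beta_n$ is solvable and the system $\Omega$ yields no contradiction at all. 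The mechanism of Propositions~\ref{prop_1} and~\ref{prop_2} (normal-form length growth in a free product) is exactly what is unavailable here, so the equation-solving template of Lemma~\ref{crucial_lemma} does not transfer.

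The paper's proof exploits non-commutativity by an entirely different device: it builds a convergent infinite product $h_\infty=\lim_n h_0\cdots h_{n-1}$ with $h_n=g_n^{-1}d_n$ supported on $\{a_n,b_n\}$, arranging via a nested system of radii $\zeta^1_n>\zeta^2_n>\zeta^3_n>\zeta^4_n$ that the \emph{open} condition ``does not commute with $(t_n)^{-1}z_n$'' holds on a whole ball around $h_{<n+1}$ and hence persists in the limit. Since $sp(h_\infty)$ is finite, some $n$ has $\{a_n,b_n\}\cap sp(h_\infty)=\emptyset$, forcing $h_\infty$ to commute with $(t_n)^{-1}z_n$ by adjacency --- contradicting the construction. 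No system of equations, no Fact~\ref{771_fact}(2), and no structural assumption on the factors beyond non-abelianness is needed. If you want to complete your argument, you should replace your final step with a limit-of-partial-products argument of this kind rather than trying to engineer unsolvable equations.
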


	\begin{proof} Suppose that $G = G(\Gamma, G_a) = (G, \mathfrak{d})$ is Polish, and let $A_1 \subseteq \Gamma$ be as in Corollary \ref{complete_outside_countable} (recall that $A_1$ is countable). By induction on $n$, choose $(a_n, g_n, t_n)$, $(b_n, d_n, z_n)$, $(h_n, h_{< n})$ and $(\zeta^{\ell}_n : \ell = 1, ..., 4)$ such that: 
	\begin{enumerate}[(a)]
	\item $a_n \neq b_n \in A_2 - (A_1 \cup \{ a_{\ell}, b_{\ell} : \ell < n\})$;
	\item $g_n, t_n \in G_{a_n}$ and they do not commute;
	\item $d_n, z_n \in G_{b_n}$ and they do not commute;
	\item $\mathfrak{d}((g_n)^{-1} d_n), e), \mathfrak{d}((t_n)^{-1} z_n, e) < \zeta^4_n$;
	\item $h_n = (g_n)^{-1}d_n$ and $h_{< n} = h_0 \cdots h_{n-1}$;
	\item $\zeta^{\ell}_n \in (0, 1)_{\mathbb{R}}$, $\frac{1}{4} \zeta^{\ell}_n \geq \zeta^{\ell + 1}_n$ and $\frac{1}{4} \zeta^{4}_n \geq \zeta^{1}_{n+1}$;
	\item if $n = m+1$ and $g \in Ball(h_{<n}; \zeta^2_n)$, then $g$ and $(t_m)^{-1}z_m$ do not commute;
	\item if $n = m+1$, and $g \in Ball(e; \zeta^3_n)$, then $\mathfrak{d}(h_{<n}g, h_{<n}) \leq \zeta^2_m$.
\end{enumerate}
[How? For $n = 0$, let $\zeta^{\ell}_n = \frac{1}{4^{\ell + 1}}$, and choose $(a_0, g_0, t_0)$, $(b_0, d_0, z_0)$, $(h_0, h_{< 0})$ as needed (where we let $h_{< 0} = e$). So assume $n = m+1$, and let $\zeta^1_n = \frac{1}{4}\zeta^4_m$. 
Now, $(g_m, d_m)$ are well-defined, and so $h_{< n} = h_{< m}h_m$ is well-defined.
Furthermore, $h_{< n}$ does not commute with $(t_m)^{-1}z_m$, i.e. $h_{< n}(t_m)^{-1}z_m(h_{< n})^{-1}(z_m)^{-1}t_m \neq e$. Thus, there is $\zeta^2_n \in (0, \frac{1}{4}\zeta^1_n)_{\mathbb{R}}$ such that:
$$g \in Ball(h_{< n}, \zeta^2_n) \; \Rightarrow \; g(t_m)^{-1}z_mg^{-1}(z_m)^{-1}t_m \neq e.$$
Also, let $\zeta^3_n \in (0, \frac{1}{4}\zeta^2_n)_{\mathbb{R}}$ be as in Fact \ref{771_fact}(1)(B) with $(\zeta^2_n, \zeta^3_n, 4)$ here standing for $(\zeta_n, \zeta_{n+1}, f(n))$ there. Similarly, choose $\zeta^4_n \in (0, \frac{1}{4}\zeta^3_n)_{\mathbb{R}}$. Finally, we show how to choose $(a_n, g_n, t_n)$ and $(b_n, d_n, z_n)$. For every $a \in A_2 - (A_1 \cup \{ a_{\ell}, b_{\ell}: \ell < n \})$ we have that $G_a$ is not abelian, and so we can find $g^a_n, p^a_n \in G_a$ which do not commute. Since $A_2$ is uncountable whereas $A_1 \cup \{ a_{\ell}, b_{\ell}: \ell < n \}$ is countable and $(G, \mathfrak{d})$ is separable, we can find uncountable $A'_n \subseteq A_2 - (A_1 \cup \{ a_{\ell}, b_{\ell}: \ell < n \})$ and $g^*_n$ such that $\{ g^a_n : a \in A'_n \} \subseteq Ball(g^*_n, \zeta^4_n/2)$. Similarly, we can find uncountable $A''_n \subseteq A'_n$ and $p^*_n$ such that $\{ p^a_n : a \in A'_n \} \subseteq Ball(p^*_n, \zeta^4_n/2)$. Chose $a_n \neq b_n \in A''_n$ and let:
$$g_n = g^a_n, \; t_n = p^a_n, \; g^b_n = d_n \text{ and } p^b_n = z_n.$$
Then $(a_n, g_n, t_n)$, $(b_n, d_n, z_n)$, $(h_n, h_{< n})$ and $(\zeta^{\ell}_n : \ell = 1, ..., 4)$ are as wanted.]
\newline Then we have: 
\begin{enumerate}[(A)] 
	\item $(h_{< n} : n < \omega)$ is Cauchy, let its limit be $h_{\infty}$;
	\item $\mathfrak{d}(h_{\infty}, h_{< n+1}) < \zeta^1_n$;
	\item $h_{\infty}$ and $(t_n)^{-1}z_n$ do not commute.
\end{enumerate}
[Why? By clause (d) above, for each $n$ we have $\mathfrak{d}((g_{n+1})^{-1}d_{n+1}, e) < \zeta^4_{n+1} < \zeta^3_{n+1}$, and so by clause (h) we have $\mathfrak{d}(h_{<n+1} , h_{<n}) \leq \zeta^2_n$. Furthermore, $\zeta^2_{n+1} < \zeta^1_{n+1} \leq \frac{1}{4}\zeta^4_n < \frac{1}{4}\zeta^2_n$. Thus, clearly the sequence $(h_{< n} : n < \omega)$ is Cauchy. Moreover, we have: 
$$\mathfrak{d}(h_{\infty},h_{< n+1}) \leq \sum\{ \zeta^2_k : k \geq n\} \leq 2\zeta^2_n < \zeta^1_n,$$
so clause (B) is satisfied.
Finally, clause (C) follows by (B) and clause (g) above.]
\newline Let $n < \omega$ be such that $\{ a_n, b_n \} \cap sp(h_{\infty}) = \emptyset$. Then $h_{\infty}$ and $(t_n)^{-1}z_n$ commute (cf. the choice of $A_1$), contradicting (C).
\end{proof}

	\begin{lemma}\label{div_mod_tor} Let $G = G(\Gamma, G_a)$ and $A_1, A_2 \subseteq \Gamma$ be as in Corollary \ref{complete_outside_countable} and Lemma \ref{not_abelian_is_countable}.  For $n < \omega$, $a \in \Gamma - (A_1 \cup A_2)$ and $g \in G_a$ we write $\varphi_n(g, G_a)$ to mean that for no divisible $K \leq G_a$ we have $g \in K + Tor_n(G_a)$ (cf. Definition \ref{def_tor}).
If for every $n < \omega$ the set:
	$$A_3(n) = \{a \in \Gamma - (A_1 \cup A_2) : \exists g \in G_a \text{ such that } \varphi_n(g, G_a) \}$$
is uncountable, then $G$ does not admit a Polish group topology.
\end{lemma}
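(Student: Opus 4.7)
The plan is to argue by contradiction, following the template of Lemmas~\ref{crucial_lemma} and~\ref{not_abelian_is_countable}. Assume $(G, \mathfrak{d})$ is Polish. Fix $k(j) := j + 1$ for $j < \omega$ (chosen so that $\prod_j k(j)$ is eventually divisible by every positive integer), and obtain by Fact~\ref{771_fact}(1) an $f$-continuity sequence $(\zeta_j)_{j < \omega}$ for some $f$ dominating $k$. By induction on $j < \omega$, using the uncountability of $A_3(((j+1)!)^2)$ together with Observation~\ref{observation_prelim} and the separability of $(G, \mathfrak{d})$, pick pairwise distinct $a_j \in A_3(((j+1)!)^2) - (A_1 \cup A_2 \cup \{a_0, \ldots, a_{j-1}\})$ and $g_{a_j} \in G_{a_j}$ witnessing $\varphi_{((j+1)!)^2}(g_{a_j}, G_{a_j})$, arranged so that $\mathfrak{d}(g_{a_j}^{-1} g_{a_{j+1}}, e) < \zeta_{j+2}$.

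Set $h_j := g_{a_j}^{-1} g_{a_{j+1}}$. By Fact~\ref{771_fact}(2) the system $\{x_j = (x_{j+1})^{k(j)} h_j : j < \omega\}$ admits a solution $(y_j)_{j < \omega}$ in $G$. Since $sp(y_0)$ is finite, pick $m_* \geq 2$ with $a_{m_*} \notin sp(y_0)$, and let $v_j := \mathbf{p}_{a_{m_*}}(y_j) \in G_{a_{m_*}}$ (using Fact~\ref{retract_fact}). Since $a_{m_*} \notin A_1 \cup A_2$, the group $G_{a_{m_*}}$ is abelian; writing additively, $v_0 = 0$ and
\[
v_j = k(j) v_{j+1} + \varepsilon_j g_{a_{m_*}}, \qquad \varepsilon_j = \begin{cases} +1 & j = m_* - 1, \\ -1 & j = m_*, \\ 0 & \text{otherwise.} \end{cases}
\]
Iterating for $0 \leq j \leq m_* - 2$ gives $(m_* - 1)! \cdot v_{m_*-1} = 0$, i.e., $v_{m_*-1} \in Tor_{(m_*-1)!}(G_{a_{m_*}})$. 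Iterating forward from $j \geq m_* + 1$, the subgroup $K := \langle v_j : j \geq m_* + 1 \rangle \leq G_{a_{m_*}}$ is divisible, since $v_j = k(j) v_{j+1}$ and every positive integer divides some tail product $\prod_{i = m_*+1}^{M} k(i) = (M+1)!/(m_*+1)!$.

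Combining the two ``boundary'' relations at $j = m_* - 1$ and $j = m_*$ yields, after a short computation, $(m_* - 1) g_{a_{m_*}} = m_*(m_*+1) v_{m_*+1} - v_{m_*-1} \in K + Tor_{(m_*-1)!}(G_{a_{m_*}})$. Using the divisibility of $K$ to absorb the factor $m_*-1$ on the $K$-side, one concludes $g_{a_{m_*}} \in K + Tor_{(m_*-1)(m_*-1)!}(G_{a_{m_*}}) \subseteq K + Tor_{((m_*+1)!)^2}(G_{a_{m_*}})$, contradicting the choice of $g_{a_{m_*}}$ as a witness of $\varphi_{((m_*+1)!)^2}$. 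The main obstacle is the tension between the two constraints on $(k(j))$: making $K$ fully divisible requires $\prod k(j)$ to cover all primes, yet this growth forces the back-propagated torsion bound $(m_*-1)!$ on $v_{m_*-1}$ to depend on a value of $m_*$ that is determined only \emph{after} the solution is found. This is reconciled by letting each $g_{a_j}$ witness $\varphi_{n_j}$ for a rapidly growing sequence $n_j$, thereby exploiting the full strength of the hypothesis that $A_3(n)$ is uncountable for \emph{every} $n$.
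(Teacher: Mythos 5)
Your overall strategy is the paper's: solve a system $x_j = x_{j+1}^{k(j)}h_j$ with $k(j)=j+1$ and small perturbations $h_j$ built from $\varphi$-witnesses, project onto a single abelian factor $G_{a_{m_*}}$ avoiding $sp(y_0)$, and read off a contradiction from ``torsion below $m_*$, divisibility above $m_*$.'' The algebra in the second half (the torsion bound $(m_*-1)!\,v_{m_*-1}=0$, the divisibility of $K=\langle v_j : j\ge m_*+1\rangle$, and the absorption of the coefficient $m_*-1$ into $K$ and the torsion exponent) is correct. But there is a genuine gap at the construction step: you require $\mathfrak{d}(g_{a_j}^{-1}g_{a_{j+1}},e)<\zeta_{j+2}$, where $g_{a_j}$ is fixed at stage $j$ and $g_{a_{j+1}}$ must be a witness of $\varphi_{((j+2)!)^2}$ drawn from the \emph{new} uncountable set $A_3(((j+2)!)^2)$. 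Observation~\ref{observation_prelim} --- the only tool here for converting uncountability plus separability into metric smallness --- produces two elements of a single uncountable set that are close \emph{to each other}; it does not produce an element of a prescribed uncountable set close to an element you have already committed to. The uncountable family of witnesses for stage $j+1$ has a condensation point somewhere, but nothing places it near $g_{a_j}$, so your chain condition cannot be justified as stated.

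The paper avoids this coupling between consecutive stages entirely: at each stage $n$ it picks \emph{two} fresh vertices $a(n)\neq b(n)$ from the same uncountable set $A_3(n!)$, with witnesses $g_{a(n)},g_{b(n)}$ that are $\zeta_{n+1}$-close by Observation~\ref{observation_prelim}, and sets $h_n=(g_{b(n)})^{-1}g_{a(n)}$. Each $h_n$ is then small for reasons internal to stage $n$, and the projection onto $\{a(n)\}$ sends $h_n$ to the full witness $g_{a(n)}$ while killing every other $h_m$. This also streamlines the endgame: the perturbation enters the projected recursion exactly once, giving $g_{a(n)}=d_n-(n+1)d_{n+1}\in Tor_{n!}(G_{a(n)})+K$ directly, with no need for your $\pm$ cancellation between stages $m_*-1$ and $m_*$ or the subsequent division by $m_*-1$. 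If you replace your single-witness-per-stage chain by this two-witnesses-per-stage device, the rest of your argument goes through (and simplifies).
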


	\begin{proof} Suppose that $G = G(\Gamma, G_a) = (G, \mathfrak{d})$ is Polish, and let $(\zeta_n)_{n < \omega} \in (0, 1)_{\mathbb{R}}^\omega$ be as in Fact \ref{771_fact} for $f \in \omega^{\omega}$ such that $f(n) = n + 4$. By induction on $n < \omega$, choose $(a(n), b(n))$ and $(g_{a(n)}, g_{b(n)})$ such that:
	\begin{enumerate}[(a)]
	\item $a(n) \neq b(n) \in \Gamma - (A_1 \cup A_2 \cup \{ a({\ell}), b({\ell}) : \ell < n\})$;
	\item $g_{a(n)} \in G_{a(n)} - \{ e \}$ and $g_{b(n)} \in G_{b(n)} - \{ e \}$;
	\item for no divisible $K \leq G_{a(n)}$ we have $g_{a(n)} \in K + Tor_{n!}(G_{a(n)})$;
	\item $\mathfrak{d}((g_{b(n)})^{-1} g_{a(n)}, e) < \zeta_{n+1}$.
\end{enumerate}
	Consider now the following set of equations: 
	$$ \Omega = \{ x_n = (x_{n+1})^{n+1} h_n: n < \omega \},$$
where $h_n = (g_{b(n)})^{-1} g_{a(n)}$. By (d) above we have $\mathfrak{d}(h_n, e) < \zeta_{n+1}$, and so by Fact \ref{771_fact}(2) the set $\Omega$ is solvable in $G$. Let $(d'_n)_{n < \omega}$ witness this. Let then $0 < n < \omega$ be such that $sp(d'_0) \cap \{ a(n), b(n) \} = \emptyset$. Let now $A = \{ a_n \}$, $\mathbf{p} = \mathbf{p}_A$ the corresponding homomorphism from Fact \ref{retract_fact} and let $\mathbf{p}(d'_n) = d_n$. Then we have (in additive notation):
\begin{enumerate}[(i)]
	\item $d_0 = e$;
	\item $m \neq n \Rightarrow d_m = (m + 1) d_{m+1} + \mathbf{p}(h_m) = (m + 1) d_{m+1}$;
	\item $d_n = (n + 1) d_{n+1} + \mathbf{p}(h_n) = (n + 1) d_{n+1} + g_{a(n)}$.
\end{enumerate}
Thus, by (ii) for $m < n$ we have $n! d_n = 0$, i.e.:
\begin{equation}\label{equation_first_venue}
d_n \in Tor_{n!}(G_{a(n)}).
\end{equation}
Furthermore, by (ii) for $m > n$ the subgroup $K$ of $G_{a(n)}$ generated by $\{ d_{n+1}, d_{n+2}, ... \}$ is divisible. Hence, by (iii) and (\ref{equation_first_venue}) we have:
$$g_{a(n)}  = - (n+1)d_{n+1} + d_n \in K + Tor_{n!}(G_{a(n)}),$$
which contradicts the choice of $g_{a(n)}$.
\end{proof}

	\begin{definition}\label{tor_def_number} Let $G = G(\Gamma, G_a)$. We define (recalling the notation of Lemma \ref{div_mod_tor}): 
	\begin{enumerate}[(1)]
	\item $n(G) = min\{ m \geq 2 : \text{ for all but $\leq \aleph_0$  many } a \in \Gamma, \forall g \in G_a (\neg \varphi_m(g, G_a)) \};$
	\item $A_3 = \{ a \in \Gamma : G_a \text{ is abelian and } \exists g \in G_a (\varphi_{n(G)}(g, G_a))\}$.
\end{enumerate}
\end{definition}

	\begin{corollary}\label{bound_div} Let $G = G(\Gamma, G_a)$, and suppose that $G$ admits a Polish group topology. Then:
	\begin{enumerate}[(1)]
	\item the natural number $n = n(G)$ from Definition \ref{tor_def_number}(1) is well-defined;
	\item the set $A_3$ from Definition \ref{tor_def_number}(2) is countable;
	\item the set $A_4 = \{a \in \Gamma: G_a \text{ is abelian and not $n$-bounded-divisible}\}$ is countable.
\end{enumerate}
\end{corollary}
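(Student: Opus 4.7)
The plan is to derive all three parts of the corollary from Lemma \ref{div_mod_tor} combined with Fact \ref{fuchs_fact}, together with the fact that $A_1$ and $A_2$ (from Corollary \ref{complete_outside_countable} and Lemma \ref{not_abelian_is_countable}) are countable whenever $G$ admits a Polish group topology.

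For (1), I first apply the contrapositive of Lemma \ref{div_mod_tor}: since $G$ is Polish, there must exist at least one $n < \omega$ for which $A_3(n)$ is countable. To see that this can be upgraded to some $m \geq 2$, I would note the monotonicity property that $Tor_n(G_a) \subseteq Tor_m(G_a)$ whenever $n \mid m$, so $\neg\varphi_n(g,G_a) \Rightarrow \neg\varphi_m(g,G_a)$ whenever $n \mid m$. Consequently, if $A_3(n)$ is countable, then $A_3(m)$ is countable for every multiple $m$ of $n$; in particular $A_3(2n)$ is countable and $2n \geq 2$, so the minimum in Definition \ref{tor_def_number}(1) is well-defined.

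For (2), I would decompose $A_3 = (A_3 \cap (A_1 \cup A_2)) \cup (A_3 \setminus (A_1 \cup A_2))$. The first piece is countable because $A_1 \cup A_2$ is countable. The second piece equals $\{a \in \Gamma - (A_1 \cup A_2) : G_a \text{ is abelian and } \exists g \in G_a\,\varphi_{n(G)}(g,G_a)\}$, which is contained in $A_3(n(G))$ of Lemma \ref{div_mod_tor}, and that set is countable by the very definition of $n(G)$ in (1).

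For (3), I would invoke Fact \ref{fuchs_fact}: if $G_a$ is abelian and not $n$-bounded-divisible, then there must exist some $g \in G_a$ for which no divisible $K \leq G_a$ satisfies $g \in K + Tor_n(G_a)$, i.e., $\varphi_n(g,G_a)$ holds. Taking $n = n(G)$, this shows $A_4 \subseteq A_3$, and the countability of $A_4$ follows from (2). The only subtle point in the entire argument is the monotonicity step in (1), which ensures the minimum can be taken $\geq 2$; otherwise, the argument is essentially routine bookkeeping once Lemma \ref{div_mod_tor} and Fact \ref{fuchs_fact} are in hand.
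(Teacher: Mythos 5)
Your proof is correct and follows essentially the same route as the paper, whose entire argument is the one-line remark that the corollary ``follows from Lemma \ref{div_mod_tor} and Fact \ref{fuchs_fact}''; you have simply supplied the routine details (contrapositive of the lemma, $A_4 \subseteq A_3$ via the contrapositive of Fact \ref{fuchs_fact}, absorption of the countable sets $A_1 \cup A_2$). Your monotonicity observation --- that $n \mid m$ gives $\neg\varphi_n(g,G_a) \Rightarrow \neg\varphi_m(g,G_a)$, so countability of some $A_3(n)$ yields a witness $m \geq 2$ for the minimum in Definition \ref{tor_def_number}(1) --- is exactly the point the paper leaves implicit, and you handle it correctly.
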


	\begin{proof} This follows from Lemma \ref{div_mod_tor} and Fact \ref{fuchs_fact}.
\end{proof}

	\begin{lemma}\label{lemma_finitely_many} Suppose that $G = G(\Gamma, G_a)$ admits a Polish group topology and let $A_1, ..., A_4$ be as Corollary \ref{complete_outside_countable}, Lemma \ref{not_abelian_is_countable}, Definition \ref{tor_def_number} and Corollary \ref{bound_div}. Then there exists a countable $A \subseteq \Gamma$ and $n < \omega$ such that:
	\begin{enumerate}[(a)]
	\item  $A_1 \cup \cdots \cup A_4 \subseteq A$;
	\item  if $a \in \Gamma - A$, then $G_a$ is $n$-bounded-divisible.
\end{enumerate}	 
\end{lemma}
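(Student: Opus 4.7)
The plan is to set $n := n(G)$, which is well-defined by Corollary~\ref{bound_div}(1), and to define $A := A_1 \cup A_2 \cup A_3 \cup A_4$. Clause~(a) is then immediate from the choice of $A$, so the proof reduces to verifying that $A$ is countable and that clause~(b) holds.

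For countability, I would simply invoke the results already established: $A_1$ is countable by Corollary~\ref{complete_outside_countable}, $A_2$ is countable by Lemma~\ref{not_abelian_is_countable}, and $A_3$, $A_4$ are countable by parts~(2) and (3) of Corollary~\ref{bound_div}, respectively. Hence $A$ is a finite union of countable sets, and therefore countable.

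For clause~(b), let $a \in \Gamma - A$. The condition $a \notin A_2$ forces $G_a$ to be abelian. Combining this with $a \notin A_4$ and the defining property of $A_4$ (the set of abelian factors that fail to be $n$-bounded-divisible, with the same $n = n(G)$ fixed above), we conclude that $G_a$ is $n$-bounded-divisible, which is exactly what is required.

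I do not expect a genuine obstacle at this step: the substantial work has already been performed in Lemmas~\ref{not_abelian_is_countable} and~\ref{div_mod_tor} (which provide the countability of $A_2$ and of each $A_3(m)$), in the choice of $n(G)$ as the optimal modulus, and in the abelian-group Facts~\ref{pre_fuchs_fact}--\ref{fuchs_fact} (which upgrade ``every element is in $K + \mathrm{Tor}_n(G_a)$ for some divisible $K$'' to ``$G_a$ is $n$-bounded-divisible''). The only subtle point to verify is that a single integer $n$ works uniformly for all $a \in \Gamma - A$; this is guaranteed precisely because $n(G)$ is defined once and for all from the global structure of $\{G_a : a \in \Gamma\}$, and $A_4$ is then defined relative to this fixed $n$.
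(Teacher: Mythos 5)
Your proposal is correct and matches the paper's proof, which simply cites Corollaries \ref{complete_outside_countable} and \ref{bound_div} and Lemmas \ref{not_abelian_is_countable} and \ref{div_mod_tor}; you have merely made explicit the choices $n = n(G)$ and $A = A_1 \cup A_2 \cup A_3 \cup A_4$ and the routine verification that the paper leaves implicit.
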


	\begin{proof} This is because of Corollaries \ref{complete_outside_countable} and \ref{bound_div}, and Lemmas \ref{not_abelian_is_countable} and \ref{div_mod_tor}.
\end{proof}

	\begin{lemma}\label{elimination_Z_infty_p} Let $G = G' \oplus G''$, with $G'' = \bigoplus_{\alpha < \lambda} G_\alpha$, $\lambda > \aleph_0$ and $G_\alpha \cong \mathbb{Z}^{\infty}_p$ (for $\mathbb{Z}^{\infty}_p$ cf. Notation \ref{def_bounded}). Then $G$ does not admit a Polish group topology.
\end{lemma}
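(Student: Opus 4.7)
The plan is to assume $G=(G,\mathfrak d)$ is Polish and derive a contradiction by constructing a convergent infinite product $h_\infty=\prod_{n<\omega}h_n$ whose algebraic (finite-support) description conflicts with its topological (infinite-support) description, in the spirit of Lemma~\ref{not_abelian_is_countable}. View $G=G'\oplus G''$ as the graph product over the complete graph $\Gamma=\{\ast\}\cup\lambda$, with $G_\ast=G'$ and $G_\alpha\cong\mathbb Z^\infty_p$. For each $\alpha<\lambda$ let $g_\alpha\in G_\alpha$ be the unique element of order $p$, and fix an $f$-continuity sequence $(\zeta_n)_{n<\omega}$ from Fact~\ref{771_fact}(1) for a sufficiently fast-growing $f$.

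Using Observation~\ref{observation_prelim} and the uncountability of $\lambda$, inductively pick pairwise disjoint pairs $(\alpha_n,\beta_n)\in\lambda^2$ such that $\mathfrak d(g_{\alpha_n}g_{\beta_n}^{-1},e)<\zeta_{n+1}$. Set $h_n:=g_{\alpha_n}g_{\beta_n}^{-1}\in G''$, so each $h_n\ne e$ satisfies $h_n^p=e$ and $\mathfrak d(h_n,e)<\zeta_{n+1}$. By Fact~\ref{771_fact}(2), applied with $k(n)=1$, $d_{(n,1)}=h_n$, $d_{(n,2)}=e$, the system $\Omega=\{x_n=h_n\cdot x_{n+1}:n<\omega\}$ is solvable in $G$; let $(x_n)_{n<\omega}$ be a solution. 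The identity $x_n x_{n+1}^{-1}=h_n\to e$ forces $(x_n)$ to be Cauchy, so $x_\infty:=\lim x_n\in G$ exists and $h_\infty:=x_0 x_\infty^{-1}$ equals the convergent product $\prod_{n<\omega}h_n$. Continuity of $g\mapsto g^p$ and the fact that every partial product $\prod_{m<N}h_m$ has order dividing $p$ in the abelian group $G$ then give $h_\infty^p=e$, so $h_\infty\in G[p]=G'[p]\oplus G''[p]$.

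Write $h_\infty=h'\cdot h''$ with $h'\in G'[p]$ and $h''\in G''[p]$ of finite support $S\subseteq\lambda$. Choose $n<\omega$ so that $\{\alpha_n,\beta_n\}\cap S=\emptyset$, and apply the retract $\mathbf p=\mathbf p_{\{\alpha_n,\beta_n\}}$ of Fact~\ref{retract_fact}. Algebraically $\mathbf p(h_\infty)=\mathbf p(h')\cdot\mathbf p(h'')=e$: $h'$ is a product of elements of $G'=G_\ast$ (sent to $e$ by $\mathbf p$), and $\operatorname{supp}(h'')\cap\{\alpha_n,\beta_n\}=\emptyset$. On the other hand, for each $m\ne n$ the support $\{\alpha_m,\beta_m\}$ of $h_m$ is disjoint from $\{\alpha_n,\beta_n\}$, so $\mathbf p(h_m)=e$, whence $\mathbf p\bigl(\prod_{m<N}h_m\bigr)=h_n\ne e$ for every $N>n$. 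Reconciling these two computations---$\mathbf p$ applied to every partial product is the fixed nonzero element $h_n$, while $\mathbf p$ applied to the $\mathfrak d$-limit is $e$---will yield the required contradiction.

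The main obstacle is exactly this last step: Fact~\ref{retract_fact} only provides $\mathbf p$ as an abstract homomorphism, with no continuity guarantee, so one cannot naively pass $\mathbf p$ through the $\mathfrak d$-limit. The intended remedy is to rerun the Fact~\ref{771_fact} machinery through $\mathbf p$: one chooses the continuity sequence $(\zeta_n)$ to control not only the group terms of $\Omega$ but also their $\mathbf p$-images (formally, work in the quotient $G/\ker\mathbf p\cong G_{\alpha_n}\oplus G_{\beta_n}$ with the induced semi-metric), so that the solution $(x_n)$ produced by Fact~\ref{771_fact}(2) descends along $\mathbf p$ to a Cauchy sequence in $G_{\alpha_n}\oplus G_{\beta_n}$ whose limit equals both $\mathbf p(h_\infty)=e$ and $\lim_N\mathbf p\bigl(\prod_{m<N}h_m\bigr)=h_n\ne e$---the desired contradiction. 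Selecting $n$ large enough that $\zeta_{n+1}$ is much smaller than the discrete distance to $h_n$ in $G_{\alpha_n}\oplus G_{\beta_n}$ (and using that the retract's image is a free abelian product covered by Dudley-type rigidity) makes this step rigorous.
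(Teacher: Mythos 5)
Your construction of the system $\Omega=\{x_n=h_n x_{n+1}\}$ and of the limit element is fine as far as it goes, but the proof collapses exactly at the point you flag yourself: you need $\mathbf p(h_\infty)=\lim_N\mathbf p\bigl(\prod_{m<N}h_m\bigr)$, and the retraction $\mathbf p$ of Fact~\ref{retract_fact} is a purely algebraic homomorphism with no continuity whatsoever. The proposed remedy does not repair this. The image $G_{\alpha_n}\oplus G_{\beta_n}$ is a countable group, so the only Polish group topology it could carry is the discrete one, and a homomorphism from a Polish group onto a countable discrete group need not be continuous; moreover ``Dudley-type rigidity'' is of no help here, since Dudley's theorem fails precisely for divisible targets such as $\mathbb Z^\infty_p$. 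There is also a second, independent defect: with exponent $k(n)=1$ the projected equations are vacuous. Applying $\mathbf p$ to the solution gives only $\mathbf p(x_0)=h_n\,\mathbf p(x_N)$ for all $N>n$, and since $\mathbf p(x_N)$ is a completely unconstrained element of $G_{\alpha_n}\oplus G_{\beta_n}$, no contradiction can be extracted algebraically. So even granting the limit interchange, the system you chose carries no usable information after projection.

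The paper's proof avoids continuity of $\mathbf p$ entirely by making the contradiction purely algebraic in the image. It uses $\Omega=\{x_n=(x_{n+1})^{p^{k(n)}}t_n\}$ with rapidly growing $k(n)$ (e.g.\ $k(n)>n$ and $2nk(n)<k(n+1)$) and, crucially, exploits that $\mathbb Z^\infty_p$ contains elements of every order $p^m$: the witnesses $g_n,h_n$ are chosen of order $p^{nk(n)}$, not of order $p$. After projecting onto $\bigoplus_{n<\omega}G_{\beta_n}$, the recursion telescopes to
$d_0=h_0+p^{k(0)}h_1+\cdots+p^{\sum_{\ell\leq n}k(\ell)}h_{n+1}$, and multiplying by $p^{nk(n)}$ kills $h_0,\dots,h_n$ but not $h_{n+1}$, so $p^{nk(n)}d_0\neq 0$ for every $n$ --- contradicting the fact that $d_0$ is a fixed element of finite order in an abelian $p$-group. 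To salvage your write-up you would have to replace the order-$p$ elements and exponent $1$ by this kind of calibrated choice of orders and exponents; the infinite-product/support-comparison strategy, as stated, cannot be completed.
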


	\begin{proof} Suppose that $G = (G, \mathfrak{d})$ is Polish, and that $G = G' \oplus G''$ is as in the assumptions of the lemma. Let $(\zeta_n)_{n < \omega} \in (0, 1)_{\mathbb{R}}^\omega$ be as in Fact \ref{771_fact} for $f \in \omega^{\omega}$ such that $f(n) = p^{k(n)} + 1$, $k(n) > n$ and $2nk(n) < k(n+1)$.
For every $n < \omega$, choose $(\alpha(n),  \beta(n))$ and $(g_n, h_n)$ such that:
	\begin{enumerate}[(a)]
	\item $\alpha(n) < \beta(n) < \lambda$ and $\alpha(n), \beta(n) \notin \{ \alpha(\ell), \beta(\ell) : \ell < n \}$;
	\item $g_n \in G_{\alpha(n)}$ and $h_n \in G_{\beta(n)}$;
	\item $g_n$ and $h_n$ have order $p^{nk(n)}$ (so $g_0 = e = h_0$);
	\item $\mathfrak{d}((g_n)^{-1} h_n, e) < \zeta_{n+1}$.
\end{enumerate} 
Consider now the following set of equations:
	$$ \Omega = \{ x_n = (x_{n+1})^{p^{k(n)}} t_n : n < \omega \},$$
where $t_n = (g_n)^{-1} h_n$. By (d) above we have $\mathfrak{d}(t_n, e) < \zeta_{n+1}$, and so by Fact \ref{771_fact}(2) the set $\Omega$ is solvable in $G$. Let $(d'_n)_{n < \omega}$ witness this. Let then $\mathbf{p}$ be the natural projection from $G$ onto $G_* = \bigoplus_{n < \omega} G_{\beta_n}$ (cf. Fact \ref{retract_fact}), and set $d_n = \mathbf{p}(d'_n)$. Hence, for every $n < \omega$, we have (in additive notation):
$$G_* \models d_n = p^{k(n)} d_{n+1} + h_n,$$
and so:
\begin{equation}\label{equation_elimination}
G_* \models d_0 = h_0 + p^{k(0)}h_1 + p^{k(0) + k(1)} h_2 + \cdots + p^{\sum_{\ell < n} k(\ell)} h_n + p^{\sum_{\ell \leq n} k(\ell)} h_{n + 1}.
\end{equation}
Thus, multiplying both sides of (\ref{equation_elimination}) by $p^{n k(n)}$, we get:
\begin{equation}\label{sec_equation_elimination}
G_* \models p^{n k(n)}d_0 = p^{\sum_{\ell \leq n} k(\ell)} p^{n k(n)} h_{n + 1},
\end{equation}
since, for $\ell \leq n$, $h(\ell)$ has order $p^{\ell k(\ell)}$ and $\ell k(\ell) \leq n k(n)$, and so we have $p^{nk(n)} h_{\ell} = 0$. Notice now that that the right side of (\ref{sec_equation_elimination}) is $\neq 0$, since $p^{\sum_{\ell \leq n} k(\ell)} p^{n k(n)}$ divides $p^{nk(n)} p^{nk(n)} = p^{2nk(n)}$, $2nk(n) < k(n+1) < (n+1)k(n+1)$ and the order of $h_{n+1}$ is $p^{(n+1) k(n + 1)}$. Hence, also the left side of (\ref{sec_equation_elimination}) is $\neq 0$, but this is contradictory, since $G_*$ is an abelian $p$-group and $k(n) > n$, for every $n < \omega$.
\end{proof}

	The next lemma is stronger than what needed for the proof of Theorem \ref{th_first_venue}, we need this formulation for the proof of Theorem \ref{th_second_venue}.

	\begin{lemma}\label{card_invariants} Suppose that $G$ admits a Polish group topology, $G = G_1 \oplus G_2$, $G_1$ is countable and $G_2 = \bigoplus \{ G^*_{s, \lambda_{s}} : s \in S_* \}$ (cf. Notation \ref{def_bounded}). Then for every $s \in S_*$ we have that $\lambda_s$ is either $\leq \aleph_0$ or $2^{\aleph_0}$. 
\end{lemma}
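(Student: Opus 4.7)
My plan is to reduce the dichotomy for $\lambda_s$ to the general cardinality dichotomy for Polish spaces (every Polish space has cardinality either $\leq \aleph_0$ or $= 2^{\aleph_0}$, by the Cantor--Bendixson theorem) by exhibiting, for each $s \in S_*$, a Polish subquotient of $G$ whose cardinality equals $\lambda_s$ modulo a countable additive error.

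First I would reduce to $G$ abelian by replacing $G$ with its center $Cent(G)$, which is closed (hence Polish), contains $G_2$ (abelian and commuting with $G_1$ as a direct summand), and satisfies $Cent(G) = (Cent(G) \cap G_1) \oplus G_2$ with $Cent(G) \cap G_1$ a countable subgroup of $G_1$. So without loss of generality $G$ itself is abelian Polish. Continuity of $x \mapsto nx$ then shows that each $T_j := Tor_{p^j}(G)$ is a closed (hence Polish) subgroup, and so each $V_j := T_j/T_{j-1}$ is a Polish abelian $\mathbb{F}_p$-vector space. A direct componentwise computation on $G_2 = \bigoplus_{s \in S_*} G^*_{s,\lambda_s}$ shows $V_j \cong C_j \oplus \bigoplus_{m \geq j}\bigoplus_{\lambda_{(p,m)}}\mathbb{F}_p$, with $C_j$ the countable $G_1$-contribution; Polishness of $V_j$ then forces $\sum_{m \geq j}\lambda_{(p,m)} \in \{\leq \aleph_0,\, 2^{\aleph_0}\}$.

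To isolate the individual $\lambda_{(p,j)}$, I would use the continuous homomorphism $\nu_j : V_{j+1} \to V_j$ induced by multiplication by $p$ in $G$, whose algebraic image on the $G_2$-part is precisely $\bigoplus_{m > j}\bigoplus_{\lambda_{(p,m)}}\mathbb{F}_p$. Taking the topological closure and forming the Polish quotient $W_j := V_j/\overline{\mathrm{im}\,\nu_j}$, one has $|W_j| \leq \aleph_0 + \lambda_{(p,j)}$, so $|W_j| = 2^{\aleph_0}$ immediately yields $\lambda_{(p,j)} = 2^{\aleph_0}$. For $s = \infty$, an analogous invariant is obtained by considering the maximal divisible subgroup of $G/Tor(G)$ built as a Polish quotient via continuity of the multiplication-by-$n$ maps; its $\mathbb{Q}$-dimension captures $\lambda_\infty$ up to countable error.

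The main obstacle is the borderline case $|W_j| \leq \aleph_0$ with $\lambda_{(p,j)}$ uncountable: here $\overline{\mathrm{im}\,\nu_j}$ strictly contains the algebraic image of $\nu_j$ and is open in $V_j$ (a closed subgroup of countable index in a Polish group is automatically open by Baire), so the $(p,j)$-generators $g^*_\alpha$ would lie in the $\mathfrak{d}$-closure of $pT_{j+1} + T_{j-1}$ without lying in it algebraically. I would rule out the uncountability in this situation by invoking Fact~\ref{771_fact} in the spirit of Lemma~\ref{elimination_Z_infty_p}: via Observation~\ref{observation_prelim}, extract a sequence of $\mathfrak{d}$-small differences $g^*_{\alpha(n)} - g^*_{\beta(n)}$ from the uncountable family of $(p,j)$-generators, set up an equation system of the form $x_n = (x_{n+1})^{p^{k(n)}} t_n$ whose solvability is ensured by the smallness of the $t_n$'s, and apply the algebraic direct-summand projection $G \to G^*_{(p,j),\lambda_{(p,j)}}$ to a solution $(d'_n)_{n < \omega}$ to extract an algebraic impossibility from the bounded $\mathbb{Z}_{p^j}$-structure of each $(p,j)$-coordinate. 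This yields $\lambda_{(p,j)} \leq \aleph_0$ in the borderline case and completes the proof.
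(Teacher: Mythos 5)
Your reduction to the abelian case and your observation that each $V_j = Tor_{p^j}(G)/Tor_{p^{j-1}}(G)$ is a Polish group whose cardinality computes the tail sum $\sum_{m \geq j}\lambda_{(p,m)}$ are correct, and they do yield the dichotomy for these tail sums. But the lemma requires the dichotomy for each individual $\lambda_{(p,j)}$ (e.g.\ $\lambda_{(p,1)}=\aleph_1$, $\lambda_{(p,2)}=2^{\aleph_0}$ is not excluded by any statement about tails), and your isolation step has a genuine gap precisely in the borderline case you flag. The image of $\nu_j$ need not be closed, and once you pass to $V_j/\overline{\mathrm{im}\,\nu_j}$ you have lost the information; your proposed repair --- an equation system $x_n = (x_{n+1})^{p^{k(n)}}t_n$ in the spirit of Lemma~\ref{elimination_Z_infty_p} --- cannot produce a contradiction here. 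That lemma's contradiction rests on the witnesses $h_n$ having \emph{unboundedly growing} orders $p^{nk(n)}$ inside the Pr\"ufer groups, so that $p^{nk(n)}d_0 \neq 0$ for every $n$ while $d_0$ is torsion. In your situation the $t_n$ are differences of elements of the \emph{bounded} group $\bigoplus_\lambda \mathbb{Z}_{p^j}$, so after projecting, the telescoped sum $d_0 = t_0 + p^{k(0)}t_1 + p^{k(0)+k(1)}t_2 + \cdots$ stabilizes after finitely many terms and every such system is solvable outright; indeed $\bigoplus_\lambda\mathbb{Z}_{p^j}$ with $\aleph_0 < \lambda < 2^{\aleph_0}$ is $L_{\infty\omega}$-equivalent to the genuinely Polish group $\bigoplus_{2^{\aleph_0}}\mathbb{Z}_{p^j}$, so no purely algebraic/equational obstruction of this kind can exist. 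The obstruction is descriptive-set-theoretic, not algebraic. (The $s=\infty$ branch is also underspecified: $Tor(G)$ is only $F_\sigma$, and neither $G/Tor(G)$ nor its maximal divisible subgroup is obviously a Polish quotient.)

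The paper's proof goes a different and, at this point, unavoidable route: using Observation~\ref{observation_prelim} it builds metrically small differences $h_n$ of generators from $I_s$ and forms, for every $\mathcal{U}\subseteq\omega$, the limit $h_{\mathcal{U}}$ of the partial products $h_{\mathcal{U}\cap n}$ (this uses completeness, not solvability of equations). It then defines a Borel equivalence relation $E_{(p,n)}$ on $\mathcal{P}(\omega)$ --- two sets are equivalent iff the corresponding limits agree modulo $G_1 + pG_2$ after raising to the power $p^{n-1}$ --- checks that changing $\mathcal{U}$ by a single element changes the class, and invokes \cite[Lemma 13]{sh_for_CH} to extract $2^{\aleph_0}$ pairwise inequivalent limits. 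These give $2^{\aleph_0}$ distinct elements of the algebraic quotient $Tor_{p^n}(G_2)/(Tor_{p^{n-1}}(G_2)+pG_2)$, whose cardinality is controlled by $\lambda_{(p,n)}$ alone, forcing $\lambda_{(p,n)}=2^{\aleph_0}$; the case $s=\infty$ is handled by the analogous relation $E_\infty$. To complete your argument you would need to replace the equational step in your borderline case by something of this kind.
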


	\begin{proof} Let $G = (G, \mathfrak{d})$ be Polish and $G = G_1 \oplus G_2$ be as in the assumptions of the lemma. Then $G_2 \cong \bigoplus \{ G_t : t \in I \}$, where for each $t \in I$ we have $G_t \cong G^*_s$ for some $s \in S_*$. For $s \in S_*$, let $I_s = \{ t \in I : G_t \cong G^*_s\}$. So $(I_s : s \in S_*)$ is a partition of $I$. We want to show that for each $s \in S_*$ we have that $|I_s| \leq \aleph_0$ or $|I_s| = 2^{\aleph_0}$. Since $S_*$ is countable,  $|I_s| \leq |G|$ and $(G, \mathfrak{d})$ is Polish, it suffices to show that $|I_s| > \aleph_0$ implies $|I_s| = 2^{\aleph_0}$. Notice that the case $s = (p, n)$ is actually taken care of by Lemma 18 and Observation 19  of \cite{paolini&shelah_cyclic}, but for completeness of exposition we give a direct proof also in the case $s = (p, n)$. 
\newline \noindent For $s \in S_*$ and $t \in I_s$, let $g_t \in G_t - \{ e \}$ be such that $g_t$ satisfies no further demands in the case $s =\infty$, and $g_t$ generates $G_t$ in the case $s = (p, n)$. Now, fix $s \in S_*$ and, using Observation \ref{observation_prelim}, by induction on $n < \omega$, choose:
$$(a(n), b(n), g_{a(n)}, g_{b(n)}, (h_{\mathcal{U}} : \mathcal{U} \subseteq n), h_n, \zeta^1_n, \zeta^2_n),$$ such that:
\begin{enumerate}[(a)]
	\item $h_{\mathcal{U}} = \prod_{\ell \in \mathcal{U}} h_{\ell}$;
	\item $0 < \zeta^1_n < \zeta^2_n < 1$;
	\item if $\mathcal{U} \subseteq n$ and $g \in Ball(e; \zeta^2_n)$, then $\mathfrak{d}(h_{\mathcal{U}}g, h_{\mathcal{U}}) < \zeta^1_n$;
	\item $a(n) \neq b(n) \in I_s - \{ a(\ell), b(\ell) : \ell < n \}$;
	\item $h_n =  (g_{a(n)})^{-1} g_{b(n)}$;
	\item $\mathfrak{d}(h_n, e) < \zeta^2_n$;
	\item $\zeta^2_{n+1} < \frac{1}{2} \zeta^1_{n}$.
\end{enumerate}
Then for $\mathcal{U} \subseteq \omega$ we have that $(h_{\mathcal{U} \cap n} : n < \omega)$ is a Cauchy sequence. Let $h_{\mathcal{U}}$ be its limit.
\newline \underline{\em Case 1}. $s = \infty$.
\newline Let: 
$$E_{\infty} = \{ (\mathcal{U}_1, \mathcal{U}_2) : \mathcal{U}_1, \mathcal{U}_2 \subseteq \omega \text{ and } \exists n \geq 2 \text{ and }  \exists g \in G_1 ((h_{\mathcal{U}_1}(h_{\mathcal{U}_2})^{-1})^n g^{-1} = e )\}.$$
Notice that:
	\begin{enumerate}[(i)]
	\item $E_{\infty}$ is an equivalence relation on $\mathcal{P}(\omega)$;
	\item $E_{\infty}$ is analytic (actually even Borel, recalling $G_1$ is countable);
	\item $\mathcal{U}_1, \mathcal{U}_2 \subseteq \omega$ and $\mathcal{U}_2 - \mathcal{U}_1 = \{ m \}$, then $\neg(\mathcal{U}_1 E_{\infty} \mathcal{U}_2)$.
\end{enumerate}
Hence, by \cite[Lemma 13]{sh_for_CH}, we get $(\mathcal{U}_{\alpha} : \alpha < 2^{\aleph_0})$ such that the $h_{\mathcal{U}_{\alpha}}$'s are pairwise non $E_{\infty}$-equivalent. Notice now that $\bigoplus \{ G_t : t \not\in I_{\infty} \}$ is torsion, while the $h_{\mathcal{U}_{\alpha}}$'s have infinite order.
Furthermore, by the choice of $E_{\infty}
$ we have that $\alpha < \beta < 2^{\aleph_0}$ implies that for every $n \geq 2$ we have $((h_{\mathcal{U}_{\alpha}}(h_{\mathcal{U}_{\beta}})^{-1})^n \not\in G_1$. It follows that:
$$G/(\bigoplus \{ G_t : t \not\in I_{\infty} \} \oplus G_1)$$
has cardinality $2^{\aleph_0}$, and so $|I_{\infty}| = 2^{\aleph_0}$, as wanted.
\newline \underline{\em Case 2}. $s = (p, n)$.
\newline Let:
$$E_{(p, n)} = \{ (\mathcal{U}_1, \mathcal{U}_2) : \mathcal{U}_1, \mathcal{U}_2 \subseteq \omega \text{ and } (h_{\mathcal{U}_1}(h_{\mathcal{U}_2})^{-1})^{p^{n - 1}} \in G_1 + pG_2 )\}.$$
Notice that:
	\begin{enumerate}[(i)]
	\item $E_{(p, n)}$ is an equivalence relation on $\mathcal{P}(\omega)$;
	\item $E_{(p, n)}$ is analytic (actually even Borel, recalling $G_1$ is countable);
	\item $\mathcal{U}_1, \mathcal{U}_2 \subseteq \omega$ and $\mathcal{U}_2 - \mathcal{U}_1 = \{ m \}$, then $\neg(\mathcal{U}_1 E_{(p, n)} \mathcal{U}_2)$.
\end{enumerate}
Hence, by \cite[Lemma 13]{sh_for_CH}, we get $(\mathcal{U}_{\alpha} : \alpha < 2^{\aleph_0})$ such that the $h_{\mathcal{U}_{\alpha}}$'s are pairwise non $E_{(p, n)}$-equivalent. Notice now that:
\begin{equation}\label{equation_card_invariant-1}
(h_{\mathcal{U}_a})^{p^{n}} = e \; \text{ and } \; (h_{\mathcal{U}_a})^{p^{n-1}} \neq e.
\end{equation}
Furthermore, by the choice of $E_{(p, n)}$ we have that:
\begin{equation}\label{equation_card_invariant}
\alpha < \beta < 2^{\aleph_0} \text{ implies } (h_{\mathcal{U}_{\alpha}}(h_{\mathcal{U}_{\beta}})^{-1})^{p^{n - 1}} \not\in G_1 + pG_2.
\end{equation}
Let $\mathbf{p}$ be the projection of $G$ onto $G_2$ (cf. Fact \ref{retract_fact}), and for $\alpha < 2^{\aleph_0}$ let $\mathbf{p}(h_{\mathcal{U}_{\alpha}}) = h'_{\alpha}$. Thus, by (\ref{equation_card_invariant}), we get:
\begin{equation}\label{equation_card_invariant2}
\alpha < \beta < 2^{\aleph_0} \text{ implies } (h'_{\alpha}(h'_{\beta})^{-1})^{p^{n - 1}} \neq e.
\end{equation}
Thus, from (\ref{equation_card_invariant-1}) and (\ref{equation_card_invariant2}) it follows that:
$$Tor_{p^n}(G_2)/(Tor_{p^{n-1}}(G_2) + pG_2)$$
has cardinality $2^{\aleph_0}$, and so $|I_{(p, n)}| = 2^{\aleph_0}$, as wanted.
\end{proof}

	\begin{proof}[Proof of Theorem \ref{th_first_venue}] This follows directly from Lemma \ref{lemma_finitely_many} (cf. the definitions of $A_1, ..., A_4$ there), Lemma \ref{elimination_Z_infty_p} (recalling Definition \ref{fact_bounded_div}) and Lemma \ref{card_invariants}.
\end{proof}

\section{Second Venue}\label{second_venue}

	In this section we prove Theorem \ref{th_second_venue}. As in the previous section, we will prove a series of lemmas from which the theorem follows. 
	
	\begin{lemma}\label{lemma_unc_factors} If $G = G(\Gamma, G_a)$, $a \neq b \in \Gamma$, $\{ a, b \} \notin E_{\Gamma}$ and $G_b$ is uncountable, then $G$ does not admit a Polish group topology.
\end{lemma}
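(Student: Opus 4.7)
The plan is to mimic Case 2 of Lemma~\ref{crucial_lemma}, using the uncountability of $G_b$ to supply enough small nontrivial elements, and then project everything onto the free product $G_a \ast G_b = G(\Gamma \restriction \{a,b\}, G_c)$ via the retraction $\mathbf{p}_{\{a,b\}}$ of Fact~\ref{retract_fact}. Specifically, assume $G = (G, \mathfrak{d})$ is Polish, fix $g_* \in G_a - \{e\}$, and let $k(n), p(n)$ be as in Proposition~\ref{prop_2}. Apply Fact~\ref{771_fact}(1) with $\bar{g}_n = (g_*)$ and $f \in \omega^\omega$ growing fast enough (e.g.\ $f(n) = p(n) + 4k(n) + 4$) to obtain an $f$-continuity sequence $(\zeta_n)_{n<\omega}$.

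Next, by induction on $n$, choose pairs $(h_{i(n)}, h_{j(n)}) \in G_b \times G_b$ with $h_{i(n)} \ne h_{j(n)}$ and
\[
\mathfrak{d}\bigl((h_{i(n)})^{-1} h_{j(n)}, e\bigr) < \zeta_{n + 2k(n) + 2}.
\]
Such pairs exist, and in fact form an uncountable family, by applying Observation~\ref{observation_prelim} to the uncountable set $G_b \subseteq G$ (intersected with a suitable ball produced by separability). Since only finitely many non-equality constraints are imposed at stage $n$, we can ensure that the sequence $h_n := (h_{i(n)})^{-1} h_{j(n)} \in G_b - \{e\}$ is pairwise distinct and further avoids the finitely many ``forbidden'' inverse relations needed to enforce clauses (d.3)--(d.6) of Proposition~\ref{prop_2}. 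Now define $h_{(n,\ell,1)} = h_{n!+2\ell}$ and $h_{(n,\ell,2)} = h_{n!+2\ell+1}$ for $\ell < k(n)$, and let $g_{(n,*)} = g_*$ and $g_{(n,i)}$ be as specified in Proposition~\ref{prop_2}(d.7). By the $f$-continuity of $(\zeta_n)$ applied to the group term witnessing $g_{(n,i)}$ (which evaluates to $e$ when all the $h$-variables are $e$, since $g_*$ and $g_*^{-1}$ alternate), we obtain $\mathfrak{d}(g_{(n,i)}, e) < \zeta_{n+1}$.

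Then Fact~\ref{771_fact}(2) guarantees a solution $(d'_n)_{n<\omega}$ in $G$ of the system
\[
\Omega = \{\, x_n = g_{(n,1)} (x_{n+1})^{p(n)} g_{(n,2)} : n < \omega \,\}.
\]
Let $A = \{a,b\}$ and $\mathbf{p} = \mathbf{p}_A$; since $\mathbf{p}$ fixes $g_*$ and each $h_n$, projecting to $\mathbf{p}(G) = G_a \ast G_b$ yields a solution of $\Omega$ inside the free product $G_a \ast G_b$. Applying Proposition~\ref{prop_2} with $H_1 = G_a$ and $H_2 = G_b$ gives the desired contradiction. The main (though routine) obstacle is verifying that the distinctness and non-inversion clauses (d.3)--(d.6) of Proposition~\ref{prop_2} can be arranged simultaneously with the metric condition; this is where uncountability of $G_b$ is essential, since separability forces only countably many bad approximations at each stage while leaving uncountably many admissible pairs available.
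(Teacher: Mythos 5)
Your proposal is correct and follows essentially the same route as the paper's own proof: the same choice of $f$, $\bar{g}_n=(g_*)$, the same selection of small nontrivial $h_n\in G_b$ via Observation~\ref{observation_prelim}, the same packaging into the $g_{(n,i)}$ of Proposition~\ref{prop_2}, solvability via Fact~\ref{771_fact}(2), and projection onto $G_a\ast G_b$ by $\mathbf{p}_{\{a,b\}}$. Your explicit remark that finitely many non-equality and non-inversion constraints per stage can be absorbed into the choice of pairs is a slightly more careful rendering of a step the paper leaves implicit.
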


	\begin{proof} Suppose that $G = G(\Gamma, G_a) = (G, \mathfrak{d})$ is Polish, and let $a \neq b \in \Gamma$ be as in the assumptions of the lemma. Let $k(n)$ and $p(n)$ be as in Proposition \ref{prop_2}, $g_* \in G_{a} - \{ e \}$ and let $(\zeta_n)_{n < \omega} \in (0, 1)_{\mathbb{R}}^\omega$ be as in Fact \ref{771_fact} for $f \in \omega^{\omega}$ such that $f(n) = p(n) + 4k(n) + 4$ and $\bar{g}_n = (g_*)$ (and so in particular $\ell(n) = 1$). Using Observation \ref{observation_prelim}, by induction on $n < \omega$, choose $h_n$ such that:
	\begin{enumerate}[(a)]
	\item $e \neq h_n \in G_b - \{ h_{\ell} : \ell < n \}$; 
	\item $\mathfrak{d}(h_{n}, e) < \zeta_{n+2k(n)+2}$.
\end{enumerate}
Let $g_{(n, *)} = g_*$, $h_{(n, \ell, 1)} = h_{n! + 2\ell}$ and $h_{(n, \ell, 2)} = h_{n! + (2\ell + 1)}$, for $\ell < k(n)$. Let then $g_{(n, i)}$ and $\Omega$ be as in Proposition \ref{prop_2}. By (b) above and Fact \ref{771_fact}(1)(B) we have $\mathfrak{d}(g_{(n, i)}, e) < \zeta_{n+1}$, and so by Fact \ref{771_fact}(2) the set $\Omega$ is solvable in $G$.
Let now $A = \{ a, b \}$ and $\mathbf{p} = \mathbf{p}_A$ be the corresponding homomorphism from Fact \ref{retract_fact}. Then projecting onto $\mathbf{p}(G) = G(\Gamma \restriction A, G_a)$ and using Proposition \ref{prop_2} we get a contradiction, since $a$ is not adjacent to $b$, and so $G(\Gamma \restriction A, G_a) = G_{a} \ast G_{b}$.
\end{proof}

	\begin{definition}\label{def_A0} For $\Gamma$ a graph, let:
	$$A_0 = A_0(\Gamma) = \{ a \in \Gamma : \text{for some } b \in \Gamma - \{a \} \text{ we have } \{a, b \} \not\in E_{\Gamma} \}.$$
\end{definition}

	\begin{lemma}\label{finitely_many_uncount_factors} If the set:
	$$A_5 = \{a \in \Gamma - A_0: G_a \text{ is not abelian and } [G_a : Cent(G_a)] \text{ is  uncountable}\}$$
is infinite, then $G(\Gamma, G_a)$ does not admit a Polish group topology.
\end{lemma}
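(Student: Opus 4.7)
The plan is to adapt the argument of Lemma \ref{not_abelian_is_countable}, replacing its use of uncountably many witness-vertices with uncountably many witness-cosets inside each of countably infinitely many vertices $a_n \in A_5$. Suppose toward contradiction that $G = (G, \mathfrak{d})$ is Polish and fix a one-to-one enumeration $(a_n : n < \omega) \subseteq A_5$. Because $A_5 \subseteq \Gamma - A_0$ (cf. Definition \ref{def_A0}), every $a_n$ is adjacent to every other vertex of $\Gamma$; in particular each $G_{a_n}$ commutes with $G_c$ for every $c \neq a_n$, and the subgroups $\{G_{a_n}\}_{n < \omega}$ pairwise commute.

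Take an $f$-continuity sequence $(\zeta_n)_{n<\omega}$ from Fact \ref{771_fact}(1) for a suitable constant $f$ (say $f \equiv 4$, enough to control a single commutator). Imitating the bookkeeping of Lemma \ref{not_abelian_is_countable}, I will build by induction on $n$ elements $h_n, t_n \in G_{a_n}$ and auxiliary radii $\zeta^{1}_n > \zeta^{2}_n > \zeta^{3}_n > \zeta^{4}_n > \zeta^{1}_{n+1}$ so that $h_n$ does not commute with $t_n$, $\mathfrak{d}(h_n, e) < \zeta^{4}_n$, at stage $n = m+1$ the radius $\zeta^{2}_n$ is chosen so that every $g \in Ball(h_{<n}; \zeta^{2}_n)$ satisfies $g t_m g^{-1} t_m^{-1} \neq e$, and $\zeta^{3}_n, \zeta^{4}_n$ are chosen via Fact \ref{771_fact}(1)(B) so that $h_{<n} := h_0 \cdots h_{n-1}$ is Cauchy with a limit $h_\infty$ satisfying $\mathfrak{d}(h_\infty, h_{<n+1}) < \zeta^{2}_{n+1}$ for every $n$. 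The arithmetic of the $\zeta^{\ell}_n$ is identical to that of Lemma \ref{not_abelian_is_countable}, so the construction reduces to specifying $(h_n, t_n)$.

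For $(h_n, t_n)$: because $[G_{a_n} : Cent(G_{a_n})]$ is uncountable, any set $R_n \subseteq G_{a_n}$ of representatives of distinct cosets of $Cent(G_{a_n})$ is uncountable, so Observation \ref{observation_prelim} applied inside $R_n$ yields $g_n \neq g'_n \in R_n$ with $\mathfrak{d}(g_n^{-1} g'_n, e) < \zeta^{4}_n$; I set $h_n = g_n^{-1} g'_n \in G_{a_n} - Cent(G_{a_n})$ and pick any $t_n \in G_{a_n}$ witnessing the non-centrality of $h_n$. Feasibility of the commutator clause at stage $n = m+1$ rests on $h_{<n}$ itself failing to commute with $t_m$: for every $\ell < m$, $h_\ell \in G_{a_\ell}$ commutes with $t_m \in G_{a_m}$ (as $a_\ell \neq a_m$ and both lie in $\Gamma - A_0$), whence $h_{<n} t_m h_{<n}^{-1} t_m^{-1} = h_{<m}[h_m, t_m] h_{<m}^{-1} \neq e$, and continuity of $g \mapsto g t_m g^{-1} t_m^{-1}$ in $\mathfrak{d}$ produces the required $\zeta^{2}_n$.

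The limit $h_\infty \in G$ then fails to commute with any $t_n$, by applying the clause $\mathfrak{d}(h_\infty, h_{<n+1}) < \zeta^{2}_{n+1}$ with $n = m+1$. But $h_\infty$ has finite support $sp(h_\infty)$, so for all large $n$, $a_n \notin sp(h_\infty)$; for such $n$, every $c \in sp(h_\infty)$ is adjacent to $a_n$ (using $a_n \in \Gamma - A_0$), so $G_{a_n}$ commutes with $G_c$, and hence with $h_\infty$; in particular $h_\infty$ commutes with $t_n \in G_{a_n}$, a contradiction. The only delicate ingredient is the synchronization of the $\zeta^{\ell}_n$ so that $h_\infty$ lies simultaneously in all the commutator-preserving balls; this bookkeeping transfers verbatim from the proof of Lemma \ref{not_abelian_is_countable}.
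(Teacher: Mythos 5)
Your proof is correct and follows essentially the same route as the paper's: both arguments extract, via separability (Observation \ref{observation_prelim} applied to uncountably many coset representatives of $Cent(G_{a_n})$), a non-central element $h_n \in G_{a_n}$ arbitrarily close to $e$, form the convergent infinite product $h_\infty = \lim_n h_{<n}$, and contradict the finiteness of $sp(h_\infty)$ using that each $a_n \in \Gamma - A_0$ is adjacent to everything else. The only (immaterial) difference is that the paper certifies the non-commutation of $h_\infty$ with the witness in $G_{a_n}$ via the algebraic splitting $h_\infty = h_{<n}\, h_n\, h_{n+1,\infty}$, whereas you use the commutator-preserving ball around $h_{<n+1}$, exactly as in Lemma \ref{not_abelian_is_countable}.
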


	\begin{proof} Suppose that $G = G(\Gamma, G_a) = (G, \mathfrak{d})$ is Polish and that the set $A_5$ in the statement of the lemma is infinite. Let then $\{ a(n) : n < \omega\}$ be an enumeration of $A_5$ without repetitions. First of all, notice that for every $a \in \Gamma$ such that $[G_a : Cent(G_a)]$ is  uncountable we have:
\begin{equation}\label{cent_equation}
	\text{for every } \varepsilon \in (0, 1)_{\mathbb{R}} \text{ we have } Ball(e; \varepsilon) \cap G_a \not\subseteq Cent(G_a).
\end{equation}
Now, by induction on $n < \omega$, choose $(g_{n, 1}, g_{n, 2}, (h_{\mathcal{U}} : \mathcal{U} \subseteq n), \zeta^2_n, \zeta^1_n)$ such that:
\begin{enumerate}[(a)] 
	\item $h_{\mathcal{U}} = \prod_{\ell \in \mathcal{U}} h_{\ell}$;
	\item $\zeta^1_n < \zeta^2_n \in (0, 1)_{\mathbb{R}}$, and for $n = m+1$ we have $\zeta^2_n < \frac{\zeta^1_m}{4}$;
	\item if $h \in Ball(e; \zeta^2_{n+1}) \cap G_{a(n)}$ and $\mathcal{U} \subseteq n$, then $\mathfrak{d}(h_{\mathcal{U}}h, h_{\mathcal{U}}) < \zeta^1_n$;
	\item $g_{n, 1} \in (Ball(e; \zeta^2_n) \cap G_{a(n)}) - Cent(G_{a(n)})$, $g_{n, 2} \in G_{a(n)}$ and $g_{n, 1}$ and $g_{n, 2}$ do not commute;
	\item if $h \in Ball(g_{n, 1}; \zeta^1_n) \cap G_{a(n)}$, then $h \in Ball(e; \zeta^2_n) \cap G_{a(n)}$, and $h$ and $g_{n, 2}$ do not commute;
	\item $h_n = g_{n, 1}$.
\end{enumerate}
[How? First choose $\zeta^2_n$ satisfying clauses (b) and (c). Then, using (\ref{cent_equation}), choose $g_{n, 1} = h_n $ as in clause (d). Finally, choose $\zeta^1_n \in (0, \zeta^2_n)_{\mathbb{R}}$ as in clause (e).]
\newline For $n < \omega$, let $h_{< n} = h_0 \cdots h_{n-1}$. Then $(h_{< n} : n < \omega)$ is Cauchy, let its limit be $h_{\infty}$. 
Notice now that because of Lemma \ref{lemma_unc_factors} without loss of generality we can assume that $n < m < \omega$ implies $\{ a(n), a(m) \} \in E_{\Gamma}$, and also that if $b \in \Gamma - \{ a(n) \}$ then $a(n) E_{\Gamma} b$. For $n < m$, let $h_{n , m} = h_n \cdots h_m$ and $h_{n, \infty} = lim (h_{n, m} : n < m < \omega)$.
Let now $n < \omega$ be such that $sp(h_{\infty}) \cap \{ a(n) \} = \emptyset$. Then we have:
\begin{enumerate}[(a')]
	\item $g_{n, 2}$ and $h_{n}$ do not commute;
	\item $g_{n, 2}$ commutes with $h_0, ..., h_{n-1}$ and with $h_{n + 1, \infty}$;
	\item $h_{\infty} = h_0 \cdots h_{n - 1} h_n h_{n + 1, \infty}$;
	\item $h_{\infty}$ and $g_{n, 2}$ do not commute.
\end{enumerate}
[Why? Clause (a') is by the inductive choices (a)-(f). Clause (b') is because for $\ell < n$ we have $a({\ell}) E_{\Gamma} a(n)$. Clause (c') is easy.  Clause (d') is an immediate consequence of (a'), (b') and (c').]
\newline Thus, by (d') we get a contradiction, since $sp(h_{\infty}) \cap \{ a(n) \} = \emptyset$, $g_{n, 2} \in G_{a(n)}$ and $b \in \Gamma - \{ a(n) \}$ implies $a(n) E_{\Gamma} b$.
\end{proof}

	\begin{lemma}\label{finiteA6} For $G$ a group, we write $\psi(G)$ to mean that $[G : Cent(G)]$ is countable, and (for ease of notation) we let $G' = Cent(G)$. If for every $n < \omega$ the set (recalling Fact \ref{max_div_subgr} and Definition \ref{def_tor}):
	$$A_6(n) = \{a \in \Gamma - A_0: \psi(G_a) \text{ and } G'_a/(Div(G'_a) + Tor_n(G'_a)) \text{ is uncountable}\}$$
is infinite, then $G(\Gamma, G_a)$ does not admit a Polish group topology.
\end{lemma}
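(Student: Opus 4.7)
The plan is to mimic the proof of Lemma~\ref{div_mod_tor}, where the role of abelianness of $G_a$ is replaced by the observation that, for $a \in \Gamma - A_0$, every element of $G'_a$ is central in all of $G$, and where $\psi(G_a)$ is used to transport the outcome of an abelian analysis into $G'_a$. Assume $(G, \mathfrak{d})$ is Polish and let $(\zeta_n)_{n<\omega}$ be a continuity sequence from Fact~\ref{771_fact}(1) for $f(n) = n+4$. Since $A_6(n!)$ is infinite for every $n$, pick distinct $a(n) \in A_6(n!)$; because $a(n) \in \Gamma - A_0$, every element of $G_{a(n)}$ commutes with every $G_b$ for $b \ne a(n)$, and hence $G'_{a(n)} \subseteq Cent(G)$. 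Using Observation~\ref{observation_prelim} on an uncountable family of representatives of distinct cosets of $Div(G'_{a(n)}) + Tor_{n!}(G'_{a(n)})$ inside $G'_{a(n)}$, pick two such representatives $u_n, v_n$ with $\mathfrak{d}(u_n^{-1} v_n, e) < \zeta_{n+1}$ and set $g_n = u_n^{-1} v_n$: then $g_n \in G'_{a(n)} \subseteq Cent(G)$, $\mathfrak{d}(g_n, e) < \zeta_{n+1}$, and $g_n \notin Div(G'_{a(n)}) + Tor_{n!}(G'_{a(n)})$.

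By Fact~\ref{771_fact}(2) applied with $k(n) = n+1$, the system $\Omega = \{x_n = (x_{n+1})^{n+1} g_n : n < \omega\}$ is solvable in $G$; let $(d'_n)_{n<\omega}$ be a solution. Pick $n^* > 0$ with $a(n^*) \notin sp(d'_0)$ and set $d_m = \mathbf{p}_{\{a(n^*)\}}(d'_m) \in G_{a(n^*)}$, so that $d_0 = e$, $d_m = d_{m+1}^{m+1}$ for $m \ne n^*$, and $d_{n^*} = d_{n^*+1}^{n^*+1} g_{n^*}$. Since $g_{n^*}$ is central in $G_{a(n^*)}$, a straightforward induction shows that the $d_m$'s pairwise commute, so they generate an abelian subgroup $H \le G_{a(n^*)}$. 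Working additively in $H$, the same computation as in Lemma~\ref{div_mod_tor} gives $n^*!\,d_{n^*} = 0$ and that $\langle d_{n^*+1}, d_{n^*+2}, \ldots\rangle \le H$ is divisible, so $g_{n^*} = d_{n^*} - (n^*+1) d_{n^*+1}$ decomposes inside $H$ as (torsion of order dividing $n^*!$) + (divisible).

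The main obstacle is that $H$ need not be contained in $G'_{a(n^*)}$, while the contradiction we seek lives in $G'_{a(n^*)}$. This is where $\psi(G_{a(n^*)})$ is essential: the quotient $Q = G_{a(n^*)}/G'_{a(n^*)}$ is countable, and since $\pi(g_{n^*}) = e$ in $Q$ the projected equations give $\pi(d_m)^{m!} = e$, hence $d_m^{m!} \in G'_{a(n^*)}$ for every $m$. Raising $g_{n^*} = d_{n^*} \cdot d_{n^*+1}^{-(n^*+1)}$ to the $n^*!$-th power (using commutativity in $H$) yields $g_{n^*}^{n^*!} = d_{n^*+1}^{-(n^*+1)!}$, and the cascade $d_{n^*+k}^{(n^*+k)!} = g_{n^*}^{-n^*!}$ for all $k \ge 1$ expresses $g_{n^*}^{n^*!}$ as an $((n^*+k)!/t_k)$-th power in $G'_{a(n^*)}$ of the central element $d_{n^*+k}^{t_k}$, where $t_k$ is the order of $\pi(d_{n^*+k})$ in $Q$. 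A careful analysis of the sequence of exponents $(n^*+k)!/t_k$, using that $\langle \pi(d_m) : m < \omega \rangle \le Q$ is a specific abelian torsion subgroup with $t_k \mid (n^*+k)!$ and $t_k \mid t_{k+1}$, upgrades the decomposition of $g_{n^*}$ from $H$ to $G'_{a(n^*)}$, yielding $g_{n^*} \in Div(G'_{a(n^*)}) + Tor_{n^*!}(G'_{a(n^*)})$, the desired contradiction.
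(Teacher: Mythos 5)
Your reduction to the abelian computation of Lemma~\ref{div_mod_tor} is the right instinct, and everything up to the sentence ``The main obstacle\dots'' is fine. But the step you flag as the main obstacle is a genuine gap, and the sketch you give for closing it does not work. You solve $\Omega$ in $G$, so the projections $d_m \in G_{a(n^*)}$ need not lie in $G'_{a(n^*)}$, and the divisible subgroup $K=\langle d_{n^*+1},d_{n^*+2},\dots\rangle$ you produce lives in $G_{a(n^*)}$, not in $G'_{a(n^*)}$; the hypothesis you must contradict concerns $Div(G'_{a(n^*)})+Tor_{n^*!}(G'_{a(n^*)})$. Your proposed repair via the central elements $c_k=d_{n^*+k}^{t_k}$ fails in the worst case: the solution $(d'_m)$ handed to you by Fact~\ref{771_fact}(2) is not under your control, and nothing prevents $\pi(d_m)$ from having order exactly $m!$ in the countable group $Q$ (the relations $\pi(d_m)=\pi(d_{m+1})^{m+1}$ are consistent with this, e.g.\ if $Q$ contains a copy of $\mathbb{Q}/\mathbb{Z}$). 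In that case $t_{n^*+k}=(n^*+k)!$, every $c_k$ equals $g_{n^*}^{-n^*!}$, the exponents $(n^*+k)!/t_k$ are all $1$, and the subgroup of $G'_{a(n^*)}$ you can build from the $c_k$'s is just $\langle g_{n^*}^{n^*!}\rangle$ --- no divisibility is extracted. Even in better cases, showing that the single element $g_{n^*}^{n^*!}$ is divisible by arbitrarily large integers inside $G'_{a(n^*)}$ would not place it in $Div(G'_{a(n^*)})$ (consider $1\in\mathbb{Z}[1/p]$, which is $p^k$-divisible for all $k$ in a reduced group), and you would still have to descend from $g_{n^*}^{n^*!}$ to $g_{n^*}$ modulo the correct torsion subgroup. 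So the ``careful analysis of the sequence of exponents'' is precisely where the argument breaks.

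The paper avoids the problem at the outset rather than repairing it at the end: since the commutator map is continuous, $Cent(G)$ is a closed subgroup of $G$, hence Polish in the restricted metric, and for $B=\Gamma-A_0$ one has $Cent(G)=\bigoplus_{a\in B}Cent(G_a)=G(\Gamma\restriction B, Cent(G_a))$, a graph product over a complete graph with abelian factors $G'_a$. One then runs the entire Lemma~\ref{div_mod_tor} argument \emph{inside this Polish group}: the continuity sequence, the elements $h_n\in G'_{a(n)}$ representing distinct cosets of $Div(G'_{a(n)})+Tor_{n!}(G'_{a(n)})$, and the solution of $\Omega$ all live in $Cent(G)$, so the projected $d_m$ automatically lie in $G'_{a(n^*)}$ and both the divisible subgroup and the torsion land where they must. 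Note also that $\psi(G_a)$ is not actually needed to drive the contradiction (it serves to organize the partition in Theorem~\ref{th_second_venue}); your proof leans on it heavily exactly because you took the wrong ambient group. If you replace your construction with ``apply Lemma~\ref{div_mod_tor}'s argument to the Polish group $Cent(G)$'' the proof goes through.
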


	\begin{proof} Suppose that $G = G(\Gamma, G_a) = (G, \mathfrak{d})$ is Polish, and let $A^*_6 = \bigcup_{n < \omega} A_6(n)$.
	Notice now that:
	\begin{enumerate}[(a)]
	\item\label{a} $a \in A^*_6$ implies $a \notin A_0(\Gamma)$ (cf. Definition \ref{def_A0});
	\item\label{b} $(Cent(G), \mathfrak{d} \restriction Cent(G))$ is a Polish group;
	\item $Cent(G) \subseteq G(\Gamma \restriction B, G_a)$, where $B = \Gamma - A_0(\Gamma)$; 
	\item $G(\Gamma \restriction B, G_a) = \bigoplus_{a \in B} G_a$;
	\item $Cent(\bigoplus_{a \in B} G_a) = \bigoplus_{a \in B} Cent(G_a) = G(\Gamma \restriction B, Cent(G_a))$.
\end{enumerate}
[Why? (\ref{a}) is because of Lemma \ref{lemma_unc_factors}. (\ref{b}) is because the commutator function is continuous and a closed subgroup of a Polish group is Polish. The rest is clear.]
\newline Hence it suffices to prove the lemma for the abelian case, i.e. assume that $\Gamma$ is complete and all the factors groups $G_a$ are abelian.
Let then $(\zeta_n)_{n < \omega} \in (0, 1)_{\mathbb{R}}^\omega$ be as in Fact \ref{771_fact} for $f \in \omega^{\omega}$ such that $f(n) = n + 4$. Toward contradiction, assume that for every $n < \omega$ the set $A_6(n)$ is infinite. Then we can choose $a(n) \in \Gamma - \{ a({\ell}) : \ell < n \}$ such that $a(n) \in A_6(n!)$, by induction on $n$. So we can find $g_{n, \alpha} \in G_{a(n)} - \{ e \}$, for $\alpha < \omega_1$, such that:
\begin{equation}\label{equation_sec_venue}
(g_{n, \alpha} + (Div(G_{a(n)}) + Tor_{n!}(G_{a(n)})) : \alpha < \omega_1) \; \text{ are pairwise distinct}.
\end{equation}
By induction on $n < \omega$, choose $\alpha(n) < \beta(n) < \omega_1$ such that $\mathfrak{d}((g_{n, \alpha(n)})^{-1} g_{n, \beta(n)}, e) < \zeta_{n+1}$. Then $h_n = (g_{n, \beta(n)})^{-1} g_{n, \alpha(n)} \in G_{a(n)}$ satisfies:
	\begin{enumerate}[(a)]
	\item $\mathfrak{d}(h_n, e) < \zeta_{n + 1}$;
	\item $h_n \notin Div(G_{a(n)}) + Tor_{n!}(G_{a(n)})$.
	\end{enumerate}
[Why? Clause (a) is clear. Clause (b) is by (\ref{equation_sec_venue}).]
\newline Consider now the following set of equations:
	$$ \Omega = \{ x_n = (x_{n+1})^{n+1} h_n: n < \omega \}.$$
By (a) above and Fact \ref{771_fact}(2) the set $\Omega$ is solvable in $G$. Let $(d'_n)_{n < \omega}$ witness this. Let then $0 < n < \omega$ be such that $sp(d'_0) \cap \{ a(n) \} = \emptyset$. Let now $A = \{ a(n) \}$, $\mathbf{p} = \mathbf{p}_A$ the corresponding homomorphism from Fact \ref{retract_fact} and let $\mathbf{p}(d'_n) = d_n$. Then we have (in additive notation):
\begin{enumerate}[(i)]
	\item $d_0 = e$;
	\item $m \neq n \Rightarrow d_m = (m + 1) d_{m+1} + \mathbf{p}(h_m) = (m + 1) d_{m+1}$;
	\item $d_n = (n + 1) d_{n+1} + \mathbf{p}(h_n) = (n + 1) d_{n+1} + h_n$.
\end{enumerate}
Thus, by (ii) for $m < n$ we have $n! d_n = 0$, i.e.:
\begin{equation}\label{equation_end_sec_venue}
d_n \in Tor_{n!}(G_{a(n)}).
\end{equation}
Furthermore, by (ii) for $m > n$ the subgroup $K$ of $G_{a(n)}$ generated by $\{ d_{n+1}, d_{n+2}, ... \}$ is divisible. Hence, by (iii) and (\ref{equation_end_sec_venue}) we have:
$$h(n)  = - (n+1)d_{n+1} + d_n \in K + Tor_{n!}(G_{a(n)}),$$
which contradicts (b) above.
\end{proof}

	We now have all the ingredients for proving Theorem \ref{th_second_venue}.

	\begin{proof}[Proof of Theorem \ref{th_second_venue}] Suppose that $G = G(\Gamma, G_a)$ admits a Polish group topology, and let $n$ be minimal such that $A_6(n)$ is finite (cf. Lemma \ref{finiteA6}). We define (notice that $A_6$ below is in fact $A_6(n)$):
	\begin{enumerate}[(i)]
	\item $A_0 = \{ a \in \Gamma : \text{for some } b \in \Gamma - \{a \} \text{ we have } \{a, b \} \not\in E_{\Gamma} \}$;
	\item $A_5 = \{a \in \Gamma : G_a \text{ is not abelian and } [G_a : Cent(G_a)] \text{ is  uncountable}\}$;
	\item $A_6 = \{a \in \Gamma : \psi(G_a) \text{ and } G'_a/(Div(G'_a) + Tor_n(G'_a)) \text{ is uncountable}\}$;
	\item $A_7 = \{ a \in \Gamma : a \not\in A_0 \cup A_5 \cup A_6 \text{ and $G_a$ is not abelian} \}$;
	\item $A_8 = \{ a \in \Gamma : a \not\in A_0 \cup A_5 \cup A_6 \text{ and $G_a$ is abelian and not bounded-divisible} \}$;
	\item $A_9 = \{ a \in \Gamma : a \not\in A_0 \cup A_5 \cup A_6 \text{ and $G_a$ is abelian and bounded-divisible} \}$.
\end{enumerate}
We claim that $\bar{A} = (A_0, A_5, A_6, A_7, A_8, A_9)$ is as wanted, i.e. we verify clauses (\ref{0})-(\ref{i}) of the statement of the theorem.  Clauses (\ref{0}), (\ref{1}) and (\ref{i}) are clear. Clause (c) is by Lemmas \ref{finitely_many_uncount_factors} and \ref{finiteA6}. Clause (\ref{c}) for $A_0$ is by Lemma \ref{crucial_lemma}, for $A_7$ is by Lemma \ref{not_abelian_is_countable} and for $A_8$ is by Corollary \ref{bound_div}.
Clause (\ref{d}) is by Lemma \ref{lemma_unc_factors}. Clause (\ref{e}) is by  Fact \ref{last_abelian_fact_second_venue}. Clause (\ref{f}) is by Definition \ref{fact_bounded_div}. Clause (\ref{g}) is by Lemma \ref{elimination_Z_infty_p}. Clause (\ref{n_bound}) is by Lemma \ref{lemma_finitely_many}, modulo renaming the factor groups $G_a$ (if necessary).
\noindent Finally, we want to show that assuming CH and letting $A = A_0 \cup A_7 \cup  A_8 \cup  A_9$ we have that $G_A = G(\Gamma \restriction A, G_a)$ admits a non-Archimedean Polish group topology. By clauses (\ref{0})-(\ref{i}) of the statement of the theorem we have:
	$$G_A \cong H \oplus \bigoplus_{\alpha < \lambda_{\infty}} \mathbb{Q} \oplus \bigoplus_{p^n \mid n_*} \bigoplus_{\alpha < \lambda_{(p, n)}} \mathbb{Z}_{p^n},$$
for some countable $H$ and $\lambda_{\infty}, \lambda_{(p, n)} \in \{ 0, 2^{\aleph_0} \}$. Since finite sums of groups admitting a non-Archimedean Polish group topology admit a non-Archimedean Polish group topology, it suffices to show that $H_1 = \bigoplus_{\alpha < 2^{\aleph_0}} \mathbb{Q} \cong \mathbb{Q}^{\omega}$ and $H_2 = \bigoplus_{\alpha < 2^{\aleph_0}} \mathbb{Z}_{p^n} \cong \mathbb{Z}_{p^n}^{\omega}$ admit one such topology. Let $K$ be either $\mathbb{Q}$ or $\mathbb{Z}_{p^n}$, and let $A$ be a countable first-order structure such that $Aut(A) = K$. Let $B$ be the disjoint union of $\aleph_0$ copies of $A$, then $K^{\omega} \cong Aut(B)$, and so we are done.
\end{proof}

	\begin{proof}[Proof of Theorem \ref{new_th_second_venue}] The fact that (1)(a) (resp. (2)(a)) implies (1)(b) (resp. (2)(b)) is clear. Concerning the other implications, argue as in the proof of Theorem \ref{th_second_venue}.
\end{proof}

\section{Third Venue}\label{third_venue}

	In this section we prove Corollaries \ref{cor_third_venue}, \ref{cor_bounded_divis} and \ref{corollary_free_prod}.
	
	\begin{proof}[Proof of Corollary \ref{cor_third_venue}] By Theorem \ref{th_first_venue} and Lemma \ref{card_invariants} the necessity of the conditions is clear. Concerning the sufficiency, argue as in the proof of Theorem \ref{th_second_venue}.
\end{proof}

	\begin{proof}[Proof of Corollary \ref{cor_bounded_divis}] This is an immediate consequence of Corollary \ref{cor_third_venue}.
\end{proof}

	\begin{proof}[Proof of Corollary \ref{corollary_free_prod}] This is a consequence of Corollary \ref{complete_outside_countable} and Lemma \ref{lemma_unc_factors}.
\end{proof}

\end{document}